\documentclass[10pt]{amsart}
\usepackage{amsfonts}
\usepackage{amssymb}
\usepackage[mathscr]{euscript}
\usepackage[all]{xy}
\usepackage[dvips]{graphics}
\usepackage{amsmath,amssymb,epsfig}

\setlength{\oddsidemargin}{0in}
\setlength{\evensidemargin}{0in}
\setlength{\textheight}{8.5in}
\setlength{\textwidth}{6in}

\newtheorem{theorem}{Theorem}
\newtheorem{lemma}[theorem]{Lemma}

\newtheorem{corollary}[theorem]{Corollary}

\title{On the Combinatorics of Smoothing}
\author{Micah W. Chrisman}
\begin{document}
\subjclass[2000]{57M25,57M27}
\begin{abstract}
Many invariants of knots rely upon smoothing the knot at its crossings.  To compute them, it is necessary to know how to count the number of connected components the knot diagram is broken into after the smoothing.  In this paper, it is shown how to use a modification of a theorem of Zulli together with a modification of the spectral theory of graphs to approach such problems systematically.  We give an application to counting subdiagrams of pretzel knots which have one component after oriented and unoriented smoothings. 
\end{abstract}
\maketitle
\section{Introduction}
In \cite{MR1341816}, Zulli gave a beautiful method by which to compute the number of state curves in any state. By a \emph{state}, we mean a choice of \emph{oriented} or \emph{unoriented} smoothing at each crossing in an oriented virtual knot diagram.
\[
\begin{array}{|ccc|} \hline
\begin{array}{c} \underline{\text{Unoriented:}} \\ d_i=1 \end{array} & \begin{array}{c} \scalebox{.25}{\psfig{figure=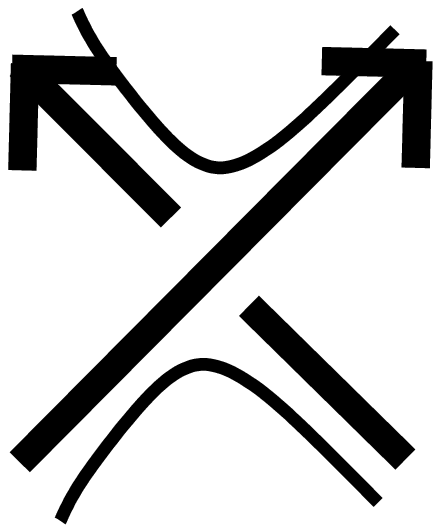}} \end{array} & \begin{array}{c} \scalebox{.25}{\psfig{figure=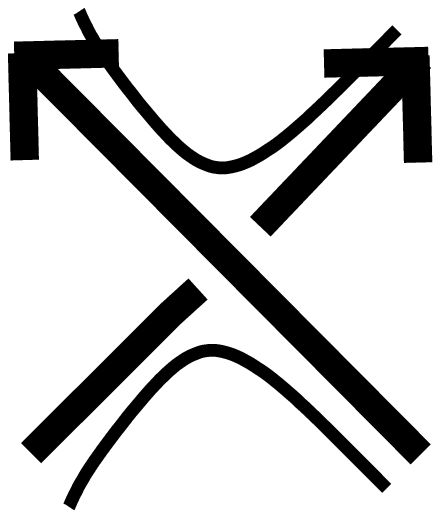}} \end{array} \\
\begin{array}{c} \underline{\text{Oriented:}} \\ d_i=0 \end{array} & \begin{array}{c} \scalebox{.25}{\psfig{figure=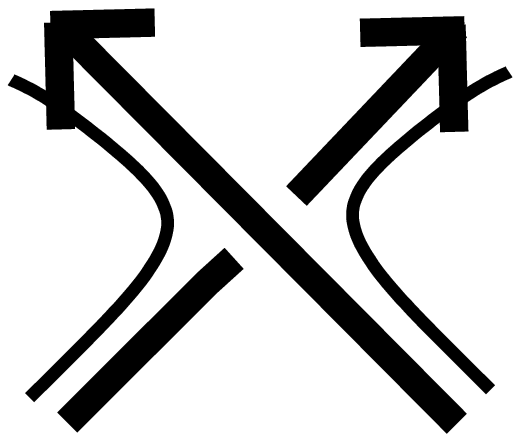}} \end{array} & \begin{array}{c} \scalebox{.25}{\psfig{figure=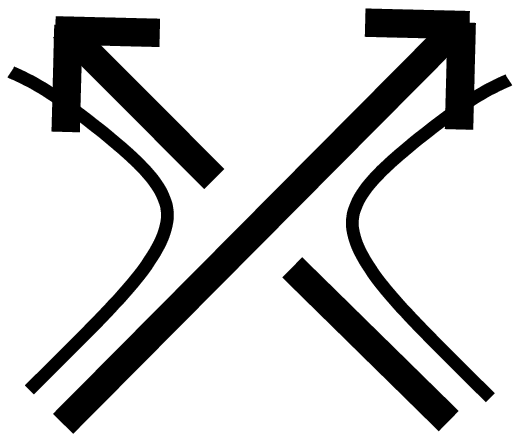}} \end{array}\\ \hline
\multicolumn{3}{|c|}{\Delta=\text{diag}(d_1,\ldots,d_n)} \\ \hline
\end{array}
\]
Using this convention (which is different than the convention in \cite{MR1341816}), Zulli's counting principle may be described as follows.  Let $D$ be the Gauss diagram of a virtual knot $K$ and $G$ the intersection graph of $D$. Let $A$ be the adjacency matrix of the intersection graph (see Figure \ref{zulli_fig}). Labelling the vertices arbitrarily from $1$ to $n$, we set $d_i=0$ if the $i$-th crossing has an oriented smoothing and $d_i=1$ if the $i$-th crossing has the unoriented smoothing. Let $\Delta=\text{diag}(d_1,\ldots,d_n)$, the diagonal matrix with these entries. For any state $S$, let $\#_D(S)$ denote the number of state curves in $K$ after smoothing.
\newline
\newline
\centerline{
\fbox{\parbox{4.2in}{\underline{\textbf{Zulli's Loop Counting Principle (ZLCP):}} The number of state curves in the smoothing $S=(d_1,\ldots,d_n)$ is: 
\newline
\centerline{
$\#_D(S)=\text{nullity}_{\mathbb{Z}_2}(A+\Delta)+1$}}}
}
\hspace{1cm}
\newline
\newline     
In the present paper, we introduce a \emph{refined loop counting principle}(RLCP). The aim of the refinement is to facilitate the computation of combinatorial invariants of virtual knots.  The principle is especially useful when applied to infinite families of knots. Such infinite families arise frequently in virtual knot theory \cite{C1,C4pap}.

While Zulli's loop counting principle works for an arbitrarily labelled intersection graph, the refined loop counting principle requires the introduction of \emph{linearly ordered graphs}. A linearly ordered graph is a simple graph having vertices labelled $1$ to $n$ and edges directed $u \to v$ whenever $u <v$.  A Gauss diagram with a base point gives a canonical ordering of the arrows, so its intersection graph $G$ becomes a linearly ordered graph $\vec{G}$ relative to this ordering. Let $\vec{A}_{\vec{G}}$ denote the \emph{skew-adjacency matrix} of $\vec{G}$. With our choice for the directed edges, the skew-adjacency matrix is non-negative above the diagonal and non-positive below the diagonal.  The diagonal itself consists only of zeros.

\begin{figure}
\[
\begin{array}{|ccc|} \hline
\begin{array}{c}\scalebox{.5}{\psfig{figure=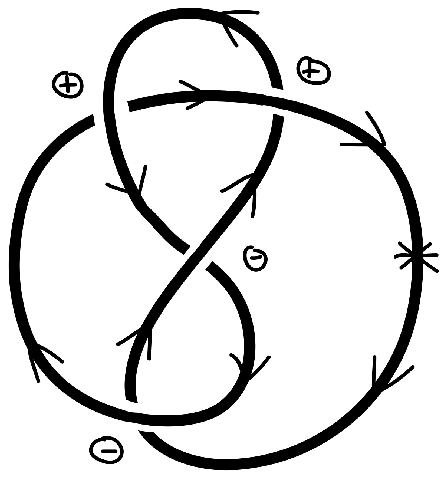}} \end{array} &\begin{array}{c}\psfig{figure=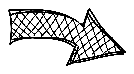} \end{array} & \begin{array}{c}\scalebox{.5}{\psfig{figure=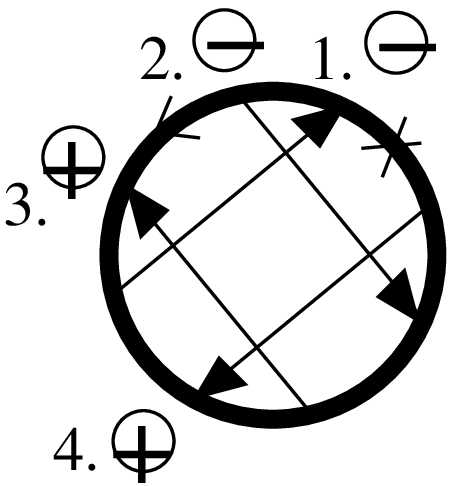}} \end{array}\\
& & \begin{array}{c}\psfig{figure=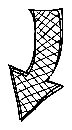} \end{array} \\
\left[\begin{array}{cccc}0 & 1 & 1 & 0 \\ 1 & 0 & 0 & 1 \\ 1 & 0 & 0 & 1 \\ 0& 1 & 1 & 0 \end{array}  \right]& \begin{array}{c}\psfig{figure=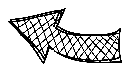} \end{array} & \begin{array}{c}\scalebox{.5}{\psfig{figure=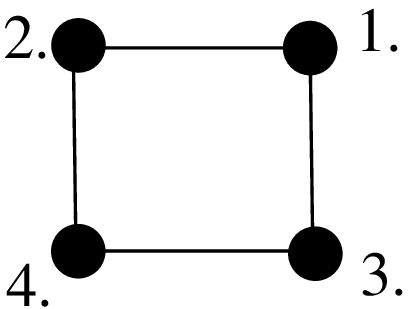}} \end{array}\\ \hline
\end{array}
\]
\caption{A knot and it's Gauss diagram, intersection graph, and adjacency matrix.} \label{zulli_fig}
\end{figure}

The RLCP will also apply to smoothing along any subset of crossings.  Let $S_o$ be the set of oriented smoothings, $S_u$ the set of unoriented smoothings, and $S_{\emptyset}$ the set of crossings which are not smoothed. We call $S=(S_o,S_u,S_{\emptyset})$ a \emph{partial smoothing} of the virtual knot.  If $D$ is a Gauss diagram of the virtual knot diagram, let $D_{\emptyset}$ denote $D$ with all arrows corresponding to crossings in $S_{\emptyset}$ erased. Let $O$ be the choice of smoothing where all arrows of a diagram are given the oriented smoothing.

For any square matrix $A$, Let $m_0(A)$ denote the multiplicity of zero as a root of the characteristic polynomial of $A$.  The refined loop counting principle may then be stated as:
\newline
\newline
\centerline{
\fbox{\parbox{4.2in}{\underline{\textbf{Refined Loop Counting Principle (RLCP):}} Let $S=(S_o,S_u,S_{\emptyset})$ be a partial state. Let $\vec{G}_{\emptyset}$ be the linearly ordered graph of $D_{\emptyset}$. 
\newline
\begin{tabular}{|c|}  \hline \sffamily A \\ \hline \end{tabular} If $S_u=\emptyset$, then $\#_D(S)=m_0(\vec{A}_{\vec{G}_{\emptyset}})+1$.
\newline
\begin{tabular}{|c|}  \hline \sffamily B \\ \hline \end{tabular} If $j \in S_u \ne \emptyset$, there are Gauss diagrams $D_j^f(S)$ and $D_j^s(S)$, called  \emph{double covers} of $D$ such that $\#_{D_j^f(S)}(O)= 2\cdot \#_D(S)=\#_{D_j^s(S)}(O)$ (see Section \ref{dblcover_sec}) .
\newline
\begin{tabular}{|c|}  \hline \sffamily C \\ \hline \end{tabular} $m_0(\vec{A}_{\vec{G}})$ may be computed directly from simpler linear ordered graphs with known characteristic polynomials.
}}
}
\hspace{1cm}
\newline
Part (A) of the RLCP is established along lines similar to the ZLCP.  In our case, the major difference lies in computing homology over $\mathbb{Q}$ rather than $\mathbb{Z}_2$.  Part (B) of the RLCP says that any loop counting problem can be reduced to loop counting of the all oriented state.  To do this, we construct a Gauss diagram from a topological double cover of a non-orientable surface $\Sigma_D(S)$ associated to $D$ and $S$ (see Section \ref{dblcover_sec}).  Part (C) is established by considering a modification of \emph{spectral graph theory}\cite{MR2571608,MR1440854} for linearly ordered graphs.  Indeed, we show how to compute characteristic polynomials of linear ordered graphs from mirror images, joins, coalescence, and adding an edge. The results are somewhat different than in the standard symmetric case. It is hoped that the RLCP will be viewed as both an interesting application of spectral graph theory and as a useful tool for combinatorial knot theorists.

We should pause for a moment to consider why the RLCP is necessary. In \cite{MR1341816}, Zulli introduced the ZLCP in order to compute the Jones polynomial of knot. Besides the Jones polynomial, there are many virtual knot invariants which require the use of loop counting.  In \cite{CKR}, it was shown that there exists a Gauss diagram formula for the Conway polynomial.  The diagrams in the formula may be described as those diagrams which have one component after applying the all oriented smoothing and are \emph{ascending} (first passage of an arrow is in the direction of the arrow). This invariant also extends to long virtual knots \cite{ChP}. In \cite{C4pap}, it was shown that this extension to long virtual knots can be used to define infinitely many inequivalent extensions of the Conway polynomial to long virtual knots, all of which satisfy the same skein relation. Also, loop counting can be used to generalize many other knot invariants \cite{BP}. The RLCP is thus a useful method to aid in the computation of these new invariants, especially on infinite families of knots (like torus knots, pretzel knots, twist sequences, fractional twist sequences, etc.).

The author was additionally inspired by the impressive accomplishments of Zulli and Traldi \cite{MR2597243}, Traldi \cite{MR2646647}, Manturov and Ilyutko \cite{IM1}, and Ilyutko, Manturov, and Nikonov\cite{INM}. In these works, the $\mathbb{Z}_2$-nullity of the adjacency matrix is used to extend the ideas of knot theory to graph theory. Traldi and Zulli have introduced the idea of loop interlacement graphs while Ilyutko, Manturov and Nikonov have developed the concept of graph-links. It is hoped that the techniques presented here will lead to new insights for graph-links and loop interlacement graphs.

The organization of this paper is as follows. In Section \ref{loopcounting}, we define linearly ordered graphs and establish parts (A) and (B) of the RLCP. In Section \ref{skewspec}, we develop a version of spectral graph theory which applies to linearly ordered graphs and thus establish part (C) of the RLCP. In Section \ref{pretzels} we illustrate the loop counting principle by counting one-component subdiagrams of pretzel knots for different kinds of smoothings. We conclude in Section \ref{problems} with some open problems and questions.
\newline
\newline
\textbf{Acknowledgements:} The author is very grateful to the anonymous reviewer who found an error in the statement of Theorem \ref{pqrodd}. The author is also grateful for the interest of C. Frohman, A. Lowrance, and M. Saito in this work.

\section{Loop Counting on Gauss Diagrams} \label{loopcounting}

\subsection{Gauss Diagrams, Intersection Graphs, and Linearly Ordered Graphs} The reader will be assumed to be familiar with the notion of a virtual knot diagram (see, for example, \cite{KaV}). We may consider any oriented virtual knot diagram as an immersion $S^1 \to \mathbb{R}^2$ where the double points are given as either classical crossings (with local orientation $\oplus$ or $\ominus$) or virtual crossings. The other points are regular points of the immersion. We choose a basepoint at some regular point and leave it fixed throughout. The basepoint is denoted by $*$.  The Gauss diagram is obtained by connecting the pre-images in $S^1$ of each classical double point by a line segment in $\mathbb{R}^2$. The line segments are directed from the over-crossing arc to the under-crossing arc.  We also decorate each arrow with the local orientation of the corresponding crossing: $\oplus$ or $\ominus$. The pre-image of the basepoint is also marked with an $*$. This is illustrated in Figure \ref{vknottogauss}. 

\begin{figure}[h]
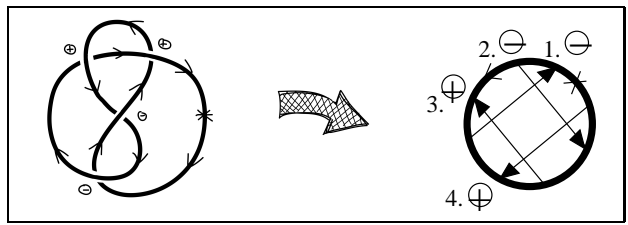

\[
\begin{array}{|ccc|} \hline
\begin{array}{c}\scalebox{.5}{\psfig{figure=present_example_knot.eps}} \end{array} &\begin{array}{c}\psfig{figure=present_arrow_right.eps} \end{array} & \begin{array}{c}\scalebox{.5}{\psfig{figure=present_example_gauss.eps}} \end{array} \\ \hline
\end{array}
\]
\caption{A virtual knot with a basepoint and its Gauss diagram}\label{vknottogauss}
\end{figure}

For the purposes of loop counting, the directions and signs of the arrows are irrelevant as long as it is known whether the smoothing is oriented or unoriented. We will thus only consider the underlying chord diagram of the Gauss diagram. Let $\mathscr{D}(S^1,*)$ denote the collection of chord diagrams on $S^1$ with a basepoint $*$. 

If $D \in \mathscr{D}(S^1,*)$, then the chords of $D$ have a \emph{canonical ordering}. The chord having the first endpoint encountered while travelling CCW from $*$ is labelled one. Delete the chord labelled one. The chord with the first endpoint in this diagram is labelled 2 in the original.  This process is repeated until all chords are labelled.

For any $D \in \mathscr{D}(S^1,*)$, we define the intersection graph $G_D$ as follows \cite{cdbook}. The vertices of $G_D$ are in one-to-one correspondence with the chords of $D$. Two vertices $u$, $v$ are connected by an edge if their corresponding chords in the Gauss diagram intersect. Each vertex of $G_D$ is labelled with the canonical ordering of the corresponding chord.  For $v \in G_D$, let $l(v)$ denote the label of $v$. If $i$ is a chord in a diagram $D \in \mathscr{D}(S^1,*)$, then  the \emph{degree of} $i$ is the degree of the corresponding vertex $v_i$ in $G_D$. 

The intersection graph may be directed according to the canonical ordering. If $u,v \in V(\vec{G}_D)$ and $u \sim v$, then the edge is directed towards $v$ if $l(u) <l(v)$ and towards $u$ if $l(v)<l(u)$. All intersection graphs $G_D$ for $D \in \mathscr{D}(S^1,*)$ are assumed to be labelled and directed according to the canonical ordering. An illustration for the virtual knot in Figure \ref{zulli_fig} is given in Figure \ref{lin_ord_fig}.

Many of our results will hold not just for directed intersection graphs of chord diagrams, but also for a class of directed graphs which we will call \emph{linearly ordered graphs}: Let $G$ be a graph with $n$ vertices labelled $1,\ldots,n$. If $v$ is a vertex of $G$, let $l(v)$ denote its label. Suppose that the edges of $G$ are directed so that $u \to v$ if and only if $l(u)<l(v)$.  A graph whose vertices have been so labelled and directed will be called \emph{linearly ordered}. A linearly ordered graph will be denoted $\vec{G}$.  

The \emph{skew-adjacency matrix} of $\vec{G}$ is defined to be the adjacency matrix of the directed graph $\vec{G}$.  In particular, the columns of the matrix are ordered by the labels of the vertices.  The $a_{ij}$ entry is 1 if $i \sim j$ and $l(i)<l(j)$, 0 if $i \not\sim j$, and $-1$ if $i \sim j$ and $l(i)>l(j)$. By construction, all of the elements of the skew-adjacency matrix above the diagonal are non-negative and all of the elements below the diagonal are non-positive (compare with \cite{MR2571608}). The skew-adjacency matrix will be denoted $\vec{A}_{\vec{G}}$. This is illustrated in Figure \ref{lin_ord_fig}. The characteristic polynomial $\det(xI-\vec{A}_{\vec{G}})$ of $\vec{A}_{\vec{G}}$ will be denoted by $\vec{P}_{\vec{G}}(x)$. 

\begin{figure}
\[
\begin{array}{|ccc|} \hline
\multicolumn{3}{|l|}{\underline{\text{Linearly Ordered Graphs}:}} \\
\vec{G}=\begin{array}{c}\scalebox{.5}{\psfig{figure=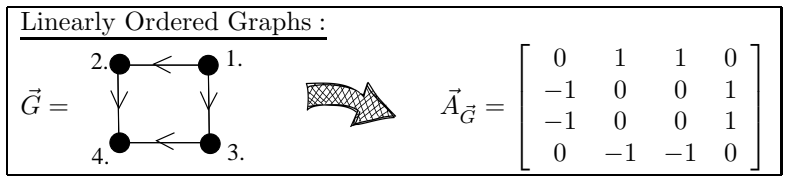}} \end{array} & \begin{array}{c}\psfig{figure=present_arrow_right.eps} \end{array} &
\vec{A}_{\vec{G}}=\left[\begin{array}{cccc}0 & 1 & 1 & 0 \\ -1 & 0 & 0 & 1 \\ -1 & 0 & 0 & 1 \\ 0 & -1 & -1 & 0 \end{array}  \right] \\ \hline
\end{array}
\]
\caption{The Linearly Ordered Graph and Skew-Adjacency Matrix of the knot in Figure \ref{zulli_fig}.} \label{lin_ord_fig}
\end{figure}   

\subsection{States, Smoothings and Bands} A \emph{partial state} $S$ of a virtual knot $K$ is a choice at each crossing of an oriented smoothing, an unoriented smoothing, or no smoothing. Let $S_o$ be the set of crossings of $K$ with the oriented smoothing, $S_u$ the set of crossings with the unoriented smoothing, and $S_{\emptyset}$ the set of crossings which are not smoothed. We write $S=(S_o,S_u,S_{\emptyset})$. Let $O$ denote the partial state where all crossings are given the oriented smoothing. By a \emph{no-unoriented smoothing}, we mean any partial smoothing with $S_u=\emptyset$.

\begin{figure}[h]
\[
\begin{array}{cc}
\scalebox{.5}{\psfig{figure=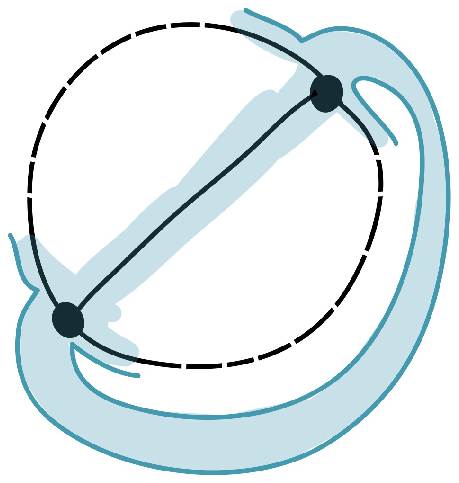}} & \scalebox{.5}{\psfig{figure=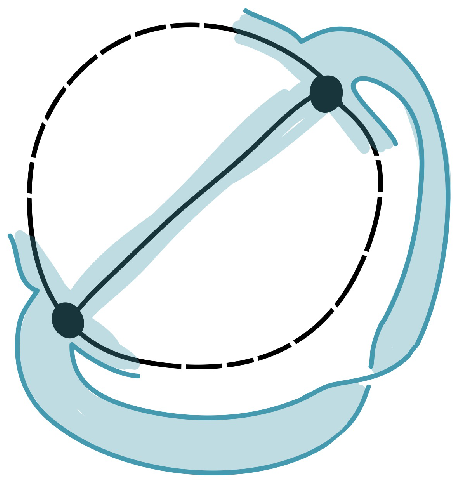}} \\
\end{array}
\]
\caption{Adding an untwisted band (left) and a half-twisted band (right).} \label{addaband}
\end{figure}

Now suppose $D \in \mathscr{D}(S^1,*)$. A \emph{partial state} $S=(S_o,S_u,S_{\emptyset})$ is a partition of the chords of $D$ into three sets where at most two of $S_o$, $S_u$, and $S_{\emptyset}$ are empty. The \emph{partial smoothing} is a compact surface $\Sigma_D(S)$ defined as follows. First, the circle of $D$ is considered as the boundary of a disk $D^2$. Let $D_{\emptyset}$ denote the diagram obtained from $D$ by deleting every chord in $S_{\emptyset}$. For every $i \in S_o$, we attach an untwisted band to the endpoints of $i$ in $D_{\emptyset}$ as in the left hand side of Figure \ref{addaband}. For every $i \in S_u$, we attach a band with a half-twist as in the right hand side of Figure \ref{addaband}. The resulting surface is $\Sigma_D(S)$ (see Figure \ref{defofsigma}). 

\begin{figure}[h]
\[
\begin{array}{cc}
\begin{array}{c}\scalebox{.5}{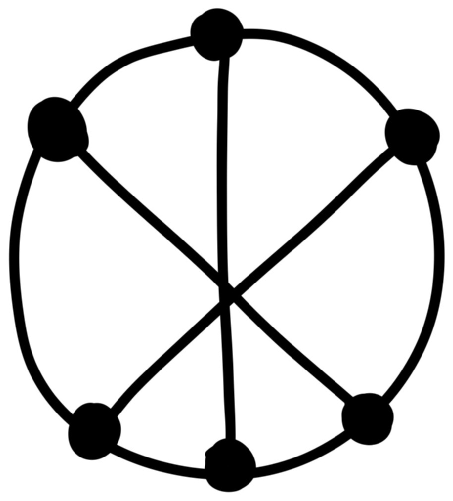}\end{array} &  \begin{array}{c}\scalebox{.5}{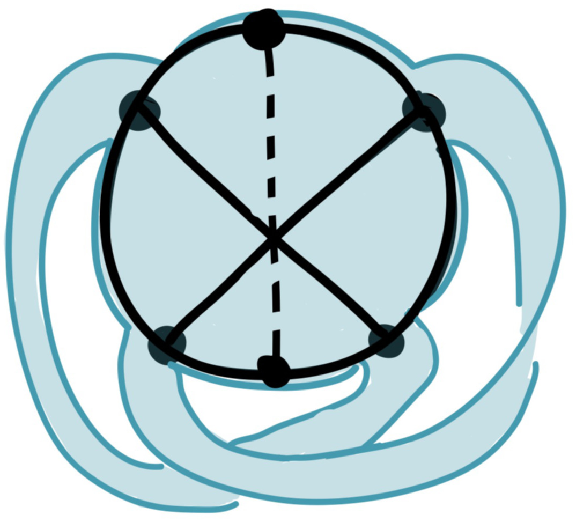}\end{array} \\
\end{array}
\]
\caption{A chord diagram $D$ (left) and partial smoothing $\Sigma_D(S)$ (right).} \label{defofsigma}
\end{figure}
\hspace{1cm}
\newline
\newline
\textbf{Definitions:} The number of boundary components of $\Sigma_D(S)$ will be denoted $\#_D(S)$. The number of closed curves of a partial smoothing $S$ of virtual knot $K$ is denoted $\#(K|S)$.

\begin{lemma} Let $S$ be a partial state of a based virtual knot diagram $K$. Let $D$ be the chord diagram of a Gauss diagram of $K$ with the canonical labelling (hence, the partial smoothing $S$ gives a partition of the set of chords of $D$). Then $\#(K|S)=\#_D(S)$. 
\end{lemma}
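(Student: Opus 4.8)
The plan is to exhibit both $\#(K\,|\,S)$ and $\#_D(S)$ as the number of components of one and the same closed $1$-manifold, built from the circle $S^1$ by a system of local "reconnection surgeries."

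First I would dispose of the chords in $S_\emptyset$. By construction $\Sigma_D(S)=\Sigma_{D_\emptyset}(S_o,S_u,\emptyset)$, and the number of closed curves of the partial smoothing $S$ of $K$ is, by definition, the number of closed curves obtained by smoothing only the crossings in $S_o\cup S_u$; a crossing that is neither smoothed nor removed does not affect this count, so the resulting $1$-manifold is the one obtained by smoothing all chords of the based chord diagram $D_\emptyset$. Hence we may assume from the outset that $S_\emptyset=\emptyset$, so that $S=(S_o,S_u,\emptyset)$ partitions the full set of $n$ chords of $D$.

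Next I would set up the combinatorial model. Write the chords of $D$ as $1,\dots,n$ with endpoints $p_i,q_i\in S^1=\partial D^2$. The $2n$ points $\{p_i,q_i\}$ cut $S^1$ into $2n$ arcs $\beta_1,\dots,\beta_{2n}$; these are precisely the arcs into which the $2n$ preimages of the classical crossings cut the domain circle of the immersion representing $K$. I would then read off $\partial\Sigma_D(S)$ directly: the boundary of the disk with the $n$ bands attached consists of the $\beta_j$ together with the two free edges of each band, and near a chord $i$ the free edges of band $i$ join the two arc-ends abutting $p_i$ to the two arc-ends abutting $q_i$. Inspection of Figure~\ref{addaband} (together with the smoothing table of the introduction, which fixes the correspondence untwisted $\leftrightarrow$ oriented, half-twisted $\leftrightarrow$ unoriented) shows that an untwisted band performs on these four arc-ends exactly the reconnection that respects the orientation of $S^1$ — the arc running into $p_i$ is joined to the arc running out of $q_i$, and conversely — while a half-twisted band performs the opposite reconnection. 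Therefore $\partial\Sigma_D(S)$ is the closed $1$-manifold obtained from $\beta_1,\dots,\beta_{2n}$ by performing, at each chord $i$, the orientation-respecting reconnection when $i\in S_o$ and the orientation-reversing one when $i\in S_u$.

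Finally, the state curves of $K$ under $S$ are obtained from the same arcs $\beta_1,\dots,\beta_{2n}$ by the same surgeries: by the very definition of the two smoothings, an oriented smoothing at a classical crossing reconnects the four local arc-ends respecting the orientation of $K$ (incoming arc-end at one preimage joined to the outgoing arc-end at the other), the unoriented smoothing reconnects them the other way, and virtual crossings contribute nothing. Comparing with the previous paragraph, the two $1$-manifolds coincide, so their component counts agree, giving $\#(K\,|\,S)=\#_D(S)$. The one point that requires care is the claim in the third paragraph identifying the reconnection produced by an untwisted versus a half-twisted band; this is a purely local computation near a single chord, and it is confirmed on the one-chord diagram, where an untwisted band yields an annulus (two boundary circles, as for the oriented smoothing of a kink) and a half-twisted band yields a Möbius band (one boundary circle, as for the unoriented smoothing).
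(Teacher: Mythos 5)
Your proof is correct and follows essentially the same route as the paper's: both arguments rest on the local correspondence between attaching an untwisted (resp.\ half-twisted) band and performing the oriented (resp.\ unoriented) reconnection, verified on the one-chord case, with the chords of $S_{\emptyset}$ handled by deletion. Your write-up simply makes explicit the identification of the two component counts as counts of the same reconnected $1$-manifold, which the paper leaves as ``the procedure for adding bands models this exactly.''
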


\begin{proof} Note that an oriented smoothing at a crossing creates a two component link. An unoriented smoothing at a crossing returns a one component knot. Also, note that making a classical crossing of $K$ into a virtual crossing corresponds to deleting the chord of the crossing in $D$. The procedure for adding bands in the definition of $\Sigma_D(S)$ models this procedure exactly. Hence, the number of boundary components of $\Sigma_D(S)$ is exactly the number of immersed curves in the partial state.
\end{proof}

It will be useful in the sequel to refer to the endpoints of the chords and the corners of each band.  The endpoint of the chord $i$ occurring first when travelling CCW from the basepoint is labelled $a_i$.  The other endpoint is labelled $b_i$. The two corners of side of the $i$-th band containing $a_i$ are labelled $a_i'$ and $a_i''$, where $a_i'$ comes before $a_i$ when travelling from the basepoint and $a_i''$ comes after $a_i$ when travelling from the basepoint. Similarly, the two corners of the side of the band containing $b_i$ are labelled $b_i'$ and $b_i''$, where $b_i'$ is encountered before $b_i$ and $b_i''$ is encountered after $b_i$.  For an illustration, see Figure \ref{aprimedef}. Note that the notation is independent of the choice of smoothing.

\begin{figure}[h]
\scalebox{.5}{
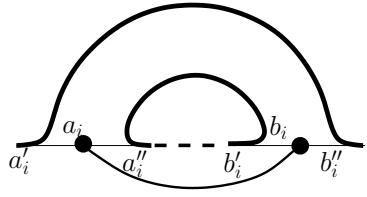}
\caption{The labelling of the endpoints of a chord and the corners of its band.} \label{aprimedef}
\end{figure}

\subsection{The No-Unoriented Smoothing: Zulli's Theorem over $\mathbb{Q}$} \label{zulthm} In this section, we establish the validity of part (A) of the RLCP. First, it is shown that the number of partial state curves can be computed from the rational nullity of the skew-adjacency matrix.  It is then proved that this is identically the multiplicity of zero as a root of the characteristic polynomial of the skew-adjacency matrix. 

\begin{theorem}\label{zullthmstate}[Zulli's Theorem over $\mathbb{Q}$] Let $D \in \mathscr{D}(S^1,*)$. Let $S=(S_o,S_u,S_{\emptyset})$ be a partial state of $D$ with $S_u=\emptyset$. Let $\vec{G}_{\emptyset}$ be the canonical linearly ordered graph of $D_{\emptyset}$. Then:
\[
\#_D(S)=\text{nullity}_{\mathbb{Q}}(\vec{A}_{\vec{G}_{\emptyset}}^t)+1=\text{nullity}_{\mathbb{Q}}(\vec{A}_{\vec{G}_{\emptyset}})+1=\dim_{\mathbb{Q}}H_1(\partial\Sigma_D(S);\mathbb{Q}).
\] 
\end{theorem}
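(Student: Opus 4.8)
The plan is to read $\#_D(S)$ off the homology of the surface $\Sigma := \Sigma_D(S)$ and to recognize $\vec{A}_{\vec{G}_{\emptyset}}$ as a Gram matrix of the intersection form of $\Sigma$. Two of the three asserted equalities are formal: $\text{nullity}_{\mathbb{Q}}(\vec{A}_{\vec{G}_{\emptyset}}^t) = \text{nullity}_{\mathbb{Q}}(\vec{A}_{\vec{G}_{\emptyset}})$ since a matrix and its transpose have equal rank, and $\dim_{\mathbb{Q}} H_1(\partial\Sigma;\mathbb{Q}) = \#_D(S)$ since $\partial\Sigma$ is a disjoint union of exactly $\#_D(S)$ circles (by definition of $\#_D(S)$). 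So the entire content is the middle identity $\#_D(S) = \text{nullity}_{\mathbb{Q}}(\vec{A}_{\vec{G}_{\emptyset}}) + 1$.

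For the setup, since $S_u = \emptyset$ the surface $\Sigma$ is an oriented disk with $n := |S_o|$ untwisted bands attached, one per chord of $D_{\emptyset}$; hence $\Sigma$ is a compact, connected, orientable surface with nonempty boundary, homotopy equivalent to a wedge of $n$ circles. Fix an orientation of $\Sigma$. For each chord $i$ of $D_{\emptyset}$ let $\gamma_i$ be the simple closed curve formed by the core of the $i$-th band together with an embedded arc in the disk joining its two feet; the classes $[\gamma_i]$, $i \in S_o$, form a basis of $H_1(\Sigma;\mathbb{Q}) \cong \mathbb{Q}^n$. By Poincar\'e--Lefschetz duality over $\mathbb{Q}$, the composite $H_1(\Sigma;\mathbb{Q}) \xrightarrow{j_*} H_1(\Sigma,\partial\Sigma;\mathbb{Q}) \xrightarrow{\cong} H^1(\Sigma;\mathbb{Q}) \cong \text{Hom}_{\mathbb{Q}}(H_1(\Sigma;\mathbb{Q}),\mathbb{Q})$, where $j_*$ is the map from the long exact sequence of the pair, is the homomorphism $x \mapsto (y \mapsto x\cdot y)$ induced by the intersection pairing; in particular $\text{rank}(j_*) = \text{rank}(M)$, where $M = \big([\gamma_i]\cdot[\gamma_j]\big)_{i,j}$ is the Gram matrix of that pairing in the basis $\{[\gamma_i]\}$. (Note $\dim_{\mathbb{Q}} H_1(\Sigma,\partial\Sigma;\mathbb{Q}) = n$ by the same duality.)

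The key step, and I expect the only delicate one, is to show that $M$ coincides with $\vec{A}_{\vec{G}_{\emptyset}}$ up to transposition and up to conjugation by a diagonal $\pm 1$ matrix, operations that change neither rank nor nullity. A simple closed curve on an oriented surface has self-intersection $0$, so $M_{ii} = 0$, matching the zero diagonal of $\vec{A}_{\vec{G}_{\emptyset}}$. If chords $i$ and $j$ are not interlaced in $D_{\emptyset}$, then $\gamma_i$ and $\gamma_j$ can be isotoped to be disjoint (route their disk arcs around one another), so $M_{ij} = 0 = (\vec{A}_{\vec{G}_{\emptyset}})_{ij}$. If $i$ and $j$ are interlaced, their four feet alternate around $\partial D^2$, so the two disk arcs must cross, and after an isotopy they cross exactly once; thus $M_{ij} = \pm 1$. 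The sign of this single transverse intersection is dictated by the cyclic order of the feet on $\partial D^2$, which is exactly the data encoded by the canonical ordering from the basepoint; orienting each $\gamma_i$ in accordance with that ordering gives $M_{ij} = +1$ when $l(i) < l(j)$ and $M_{ij} = -1$ when $l(i) > l(j)$, i.e.\ $M = \vec{A}_{\vec{G}_{\emptyset}}$ (a different orientation convention only replaces $M$ by $\vec{A}_{\vec{G}_{\emptyset}}^t$ or by $D\,\vec{A}_{\vec{G}_{\emptyset}}\,D$ for a diagonal sign matrix $D$). The real work here is the careful sign bookkeeping: because the rank of a skew-symmetric $0/{\pm}1$ matrix genuinely depends on its signs and not merely on which entries are nonzero, one must check that the topological intersection numbers reproduce the combinatorially-defined $\vec{A}_{\vec{G}_{\emptyset}}$ exactly, and not just a matrix with the same support.

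Finally, the long exact sequence of $(\Sigma,\partial\Sigma)$ over $\mathbb{Q}$, together with $\tilde{H}_0(\Sigma;\mathbb{Q}) = 0$, gives $\tilde{H}_0(\partial\Sigma;\mathbb{Q}) \cong \text{coker}(j_*)$, whence
\[
\#_D(S) - 1 = \dim_{\mathbb{Q}} \tilde{H}_0(\partial\Sigma;\mathbb{Q}) = \dim_{\mathbb{Q}} \text{coker}(j_*) = n - \text{rank}(M) = n - \text{rank}\big(\vec{A}_{\vec{G}_{\emptyset}}\big) = \text{nullity}_{\mathbb{Q}}\big(\vec{A}_{\vec{G}_{\emptyset}}\big),
\]
which is the desired identity, and with the two formal reductions above it yields the full chain of equalities in the theorem.
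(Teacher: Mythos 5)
Your proposal follows essentially the same route as the paper: both identify $\#_D(S)$ via the map $j_*\colon H_1(\Sigma;\mathbb{Q})\to H_1(\Sigma,\partial\Sigma;\mathbb{Q})$ composed with Lefschetz duality, represent it in the basis of band-core curves, check that the resulting matrix (the intersection form) is exactly $\vec{A}_{\vec{G}_{\emptyset}}$ with the signs governed by the canonical ordering, and finish with the rank--nullity count coming from the long exact sequence of the pair (which the paper delegates to Zulli's original argument). The proof is correct and no further comparison is needed.
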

\begin{proof} Let $n$ be the number of chords of $D_{\emptyset}$. Let $j:(\Sigma_D(S),\emptyset) \to (\Sigma_D(S),\partial \Sigma_D(S))$ denote the inclusion of pairs. By \cite{MR1341816}, it is sufficient to show that the skew-adjacency matrix $\vec{A}_{\vec{G}_{\emptyset}}$ represents the map:

\begin{eqnarray*}
j_*:H_1(\Sigma_D(S);\mathbb{Q}) & \to & H_1(\Sigma_D(S),\partial \Sigma_D(S);\mathbb{Q}) \\
                         & \cong & H^1(\Sigma_D(S) ; \mathbb{Q}) \,\,\text{(Lefschetz duality)}\\
                         & \cong & \text{Hom}_{\mathbb{Q}}(H_1(\Sigma_D(S);\mathbb{Q}),\mathbb{Q}). 
\end{eqnarray*}

Consider the band $I \times I$ attached along the arcs $a_k'a_k''$ and $b_k'b_k''$. Let $c_k$ be a $1$-simplex in the band with endpoints $a_k$ and $b_k$ corresponding to the central arc of the band. The central arc of the band is the homeomorphic image of $\{1/2\} \times [0,1]$ in $\Sigma_D(S)$. This $1$-simplex is oriented from $b_i$ to $a_i$. Let $c_k'$ denote the $1$-simplex of the chord labelled $k$. We orient $c_k'$ from $a_i$ to $b_i$.  Then  $c_k+c_k'$ is a $1$-cycle in $Z(\Sigma_D(S);\mathbb{Q})$ (see Figure \ref{defofi}). We will denote this $1$-cycle of the chord labelled $k$ by $k$.

The set of $1$-cycles $k$ just defined is a basis for $H_1(\Sigma_D(S);\mathbb{Q})$. We will denote this basis by $\mathscr{B}=\{1,2,\ldots,n\}$. A choice of basis for $H_1^*(\Sigma_D(S);\mathbb{Q})$ can be made canonical as follows. Start from the basepoint and move in the direction of orientation. The arc directed from $a_i'$ to $a_i''$ can be considered as a 1-cycle in $Z(\Sigma_D(S),\partial \Sigma_D(S);\mathbb{Q})$. Denote this class by $i^*$. By the proof of Alexander-Poincar\'{e} duality (see \cite{MR1402473}, Theorem 5.3.13), the ordered set $\{1^*,\ldots,n^* \}$ can be taken as an ordered basis of $H_1(\Sigma_D(S),\partial \Sigma_D(S);\mathbb{Q})$.

Following Zulli \cite{MR1341816}, we replace the class $i \in \mathscr{B}$ by a homologous class $i'$ such that $j_*(i)=j_*(i')$ (drawn red in Figure \ref{defofi}). The class of $i'$ is the class of the $1$-simplex from $a_i''$ to $b_i'$ together with the arc from $b_i'$ to $a_i''$ along the boundary of the $i$-th band.
\begin{figure}
\scalebox{.5}{\psfig{figure=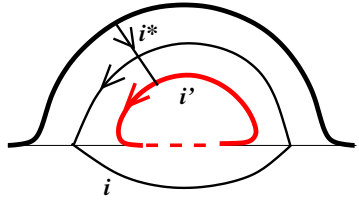}}
\caption{The 1-cycles $i$, $i'$, and $i^*$.} \label{defofi}
\end{figure}
With this notation, it is clear that $j_*(i)=j_*(i')$.  Moreover, it is seen that $j_*(i')$ modulo the boundary is the $i$-th \emph{row} of the skew-adjacency matrix. Indeed, if a chord labelled $k$ intersects the chord labelled $i$ and $i<k$, then the contribution to the linear combination is $+1$.  On the other hand, if $k<i$, then the contribution is $-1$. As the surface is orientable, the remainder of the argument in \cite{MR1341816} follows without alteration to give:
\[
\dim_{\mathbb{Q}}(\ker(j_*))+1=\dim_{\mathbb{Q}}(H_1(\partial \Sigma_D(S),\mathbb{Q})).
\]
This completes the proof.
\end{proof}

To establish (A) of the RLCP, it remains to show that the number of loops in the all-oriented state can be computed directly from the characteristic polynomial. First we must identify the eigenvectors in the $\lambda=0$ eigenspace. We use the notation of Theorem \ref{zullthmstate}. Let $\mathscr{E}(0)$ denote the $\lambda=0$ eigenspace of $\vec{A}_{\vec{G_{\emptyset}}}$. A spanning set of eigenvectors for $\mathscr{E}(0)$ can be determined as follows. Let $\mathscr{S}_D$ denote the set of boundary components of $\Sigma_D(S)$. Let $\iota(C)$ be the set of all arcs $a_i'b_i''$ and $a_i''b_i'$ on the $i$-th band which the component $C \in \mathscr{S}_D$ contains. Let $c \in \iota(C)$. If $c=a_i'b_i''$ for some $i$, define $\sigma(c)=1$. If $c=a_i''b_i'$ for some $i$, define $\sigma(c)=-1$. If $c \in \iota(C)$ is on the $i$-th band, let $e_c$ denote the vector of length $n$($=$ number of chords) having a 1 in the $i$-th position and zero elsewhere. 

\begin{corollary} \label{zeroeigen} Given the notation as above, the following statements hold.
\begin{enumerate}
\item For all $C \in \mathscr{S}_D$, the element $\theta_C=\sum_{c \in \iota(C)} \sigma(c) e_c \in \ker(\vec{A}_{\vec{G}_{\emptyset}})$.
\item The set $\beta(D)=\{\theta_C: C \in \mathscr{S}_D\}$ is a spanning set for the $\lambda=0$ eigenspace of $\vec{A}_{\vec{G}_{\emptyset}}$ and is hence linearly dependent.
\item If $\beta(D) \ne \{\vec{0}\}$, there is an element of $\beta(D)$ which can be removed from $\beta(D)$ to give a basis for $\mathscr{E}(0)$.
\item The dimension of the $\lambda=0$ eigenspace is equal to the algebraic multiplicity of $0$ as a root of the characteristic polynomial.
\end{enumerate} 
\end{corollary}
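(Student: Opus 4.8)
The plan is to establish the four items in order, reusing the homological framework from the proof of Theorem~\ref{zullthmstate}. Write $\Sigma=\Sigma_D(S)$; since $S_u=\emptyset$ every band is untwisted, so $\Sigma$ is a connected orientable surface with nonempty boundary, $\partial\Sigma$ is a disjoint union of circles indexed by $\mathscr{S}_D$, and $\mathscr{B}=\{1,\dots,n\}$ (band core plus chord arc) is a basis of $H_1(\Sigma;\mathbb{Q})$ in which the map $j_*\colon H_1(\Sigma;\mathbb{Q})\to H_1(\Sigma,\partial\Sigma;\mathbb{Q})$ is represented by $\vec{A}_{\vec{G}_\emptyset}^{\,t}$. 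Because the adjacency relation is symmetric while the skew-adjacency sign convention reverses across the diagonal, $\vec{A}_{\vec{G}_\emptyset}$ is a real skew-symmetric matrix, so $\vec{A}_{\vec{G}_\emptyset}^{\,t}=-\vec{A}_{\vec{G}_\emptyset}$; hence $\mathscr{E}(0)=\ker(\vec{A}_{\vec{G}_\emptyset})=\ker(\vec{A}_{\vec{G}_\emptyset}^{\,t})$ is identified, via $\mathscr{B}$, with $\ker(j_*)$.

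For (1) and (2) I would use the homology long exact sequence of the pair $(\Sigma,\partial\Sigma)$, namely $H_1(\partial\Sigma;\mathbb{Q})\xrightarrow{\,i_*\,}H_1(\Sigma;\mathbb{Q})\xrightarrow{\,j_*\,}H_1(\Sigma,\partial\Sigma;\mathbb{Q})$, which gives $\ker(j_*)=\operatorname{im}(i_*)$. The classes $[C]$ of the boundary circles are the standard generators of $H_1(\partial\Sigma;\mathbb{Q})$, so $\operatorname{im}(i_*)$ is spanned by the vectors $i_*[C]$. The geometric heart of the matter is to verify that, written in the basis $\mathscr{B}$, one has $i_*[C]=\theta_C$: pushing $C$ into the interior of $\Sigma$ and retracting onto the band--core complex (compare Figure~\ref{defofi}), the sub-arcs of $C$ lying on the boundary of the disk contract and contribute nothing, whereas each traversal by $C$ of a free side of the $i$-th band contributes $\pm$ the core class $i$, the sign being exactly $\sigma(c)$ of the corresponding arc $c\in\iota(C)$ --- for an untwisted band the two free sides are precisely $a_i'b_i''$ and $a_i''b_i'$, and a coherently oriented $C$ crosses them in opposite senses, which is what the values $\sigma(a_i'b_i'')=+1$, $\sigma(a_i''b_i')=-1$ encode. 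This gives $\theta_C=i_*[C]\in\operatorname{im}(i_*)=\ker(j_*)=\ker(\vec{A}_{\vec{G}_\emptyset})$, proving (1), and at the same time shows $\beta(D)$ spans $\ker(\vec{A}_{\vec{G}_\emptyset})=\mathscr{E}(0)$. For the linear dependence in (2): $\sum_{C\in\mathscr{S}_D}C=\partial\Sigma$ is the image of the fundamental class of $(\Sigma,\partial\Sigma)$, hence lies in $\ker(i_*)$, so $\sum_{C}\varepsilon_C\theta_C=0$ for suitable signs $\varepsilon_C=\pm1$ --- a nontrivial relation with all coefficients nonzero, so $\beta(D)$ is linearly dependent.

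For (3) I would combine this spanning statement with the count in Theorem~\ref{zullthmstate}: $\dim_{\mathbb{Q}}\mathscr{E}(0)=\text{nullity}_{\mathbb{Q}}(\vec{A}_{\vec{G}_\emptyset})=\#_D(S)-1=|\mathscr{S}_D|-1$. Thus $\{\theta_C\}_{C\in\mathscr{S}_D}$ is a spanning family of $|\mathscr{S}_D|$ vectors for a space of dimension $|\mathscr{S}_D|-1$, so its space of linear relations has dimension $1$ and therefore consists exactly of the scalar multiples of the relation $\sum_C\varepsilon_C\theta_C=0$ found above. Deleting any single $\theta_{C_0}$ then leaves a family that still spans $\mathscr{E}(0)$ (as $\varepsilon_{C_0}\ne0$ permits solving for $\theta_{C_0}$) and that has exactly $|\mathscr{S}_D|-1=\dim_{\mathbb{Q}}\mathscr{E}(0)$ members, hence a basis; the hypothesis $\beta(D)\ne\{\vec{0}\}$ merely discards the degenerate case $|\mathscr{S}_D|=1$. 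Finally, (4) is a general property of $\vec{A}_{\vec{G}_\emptyset}$: a real skew-symmetric matrix is normal, hence diagonalizable over $\mathbb{C}$, so the algebraic and geometric multiplicities of every eigenvalue coincide; in particular $m_0(\vec{A}_{\vec{G}_\emptyset})=\dim_{\mathbb{C}}\ker_{\mathbb{C}}(\vec{A}_{\vec{G}_\emptyset})=\dim_{\mathbb{Q}}\mathscr{E}(0)$. (Equivalently, $v^{t}\vec{A}_{\vec{G}_\emptyset}^{\,t}\vec{A}_{\vec{G}_\emptyset}v=-v^{t}\vec{A}_{\vec{G}_\emptyset}^{2}v$ forces $\ker\vec{A}_{\vec{G}_\emptyset}=\ker\vec{A}_{\vec{G}_\emptyset}^{2}$, so $0$ is a semisimple eigenvalue.)

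The step I expect to be the main obstacle is the geometric identification $i_*[C]=\theta_C$ in (1): it requires tracking the orientations of the band cores, the chord arcs, and the two free sides of each untwisted band carefully enough to see that the combinatorial signs $\sigma(c)$ are precisely those produced by the retraction. Everything afterwards --- the long exact sequence, the fundamental-class relation, the dimension count, and the elementary spectral fact about real skew-symmetric matrices --- is routine.
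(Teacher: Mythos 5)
Your proposal is correct and follows essentially the same route as the paper: item (1) via the identification of $\theta_C$ with the class of $\pm[C]$ in the basis $\mathscr{B}$, items (2)--(3) via Theorem \ref{zullthmstate} (you make explicit the long exact sequence and the dimension count $\dim\mathscr{E}(0)=|\mathscr{S}_D|-1$ that the paper leaves implicit), and item (4) via diagonalizability of real skew-symmetric matrices over $\mathbb{C}$. The only difference is level of detail, not method.
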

\begin{proof} For the first claim, we note that with our choice of signs, $\theta_C$ is just the homology class of $\pm [C]$ written in terms of the basis $\mathscr{B}$ from the proof of Theorem \ref{zullthmstate}. Hence $\theta_C$ is in the kernel of $\vec{A}_{\vec{G}_{\emptyset}}$. The second and third claims follow from Theorem \ref{zullthmstate}. For the last claim, recall that since $\vec{A}_{\vec{G}_{\emptyset}}$ is a skew-symmetric matrix over $\mathbb{R}$, it is diagonalizable over $\mathbb{C}$.  On the other hand, the first three claims show that $\mathscr{E}(0)$ has a basis consisting of vectors over the rationals.  It follows that the dimension of the eigenspace over the complex numbers is equal to the dimension of the eigenspace over the rational numbers.  This is exactly the algebraic multiplicity.                               
\end{proof}

\subsection{Unoriented Smoothings: Double Covers of $\Sigma_D(S)$.} \label{dblcover_sec} Now consider the case of a partial smoothing $S$ where at least one arrow has the unoriented smoothing. Then $\Sigma_D(S)$ is non-orientable. Thus, the proof of Theorem \ref{zullthmstate} does not apply.  This problem may be solved by taking an orientable double covering of $\Sigma_D(S)$ and interpreting it as a chord diagram.

Let $S=(S_o,S_u,S_{\emptyset})$ be a partial state of $D \in \mathscr{D}(S^1,*)$.  Note that if $S_u$ is non-empty, then the surface $\Sigma_D(S)$ is not orientable. 

We construct an orientable double cover $\Sigma_D^2(S)$ of $\Sigma_D(S)$ as follows. Draw two copies of $\Sigma_D(S)$ side-by-side in $\mathbb{R}^2 \times \mathbb{R}$, where one is shaded blue and the other is shaded red. Consider a twisted band $B$ on the blue $\Sigma_D(S)$ and its corresponding band $B'$ on the red $\Sigma_D(S)$.  We make a horizontal cut on $B$ and the corresponding cut on $B'$.  This divides each into two halves $B_1$,$B_2$ and $B'_1,B'_2$. We sew $B_1$ to $B_2'$ and $B_2$ to $B_1'$ so that the half twist is preserved.  The result is an orientable double cover for $\Sigma_D(S)$. This process is illustrated in Figure \ref{sigma2fig}.

\begin{figure}[h]
\[
\begin{array}{ccc}
 \begin{array}{c}\scalebox{.05}{\psfig{figure=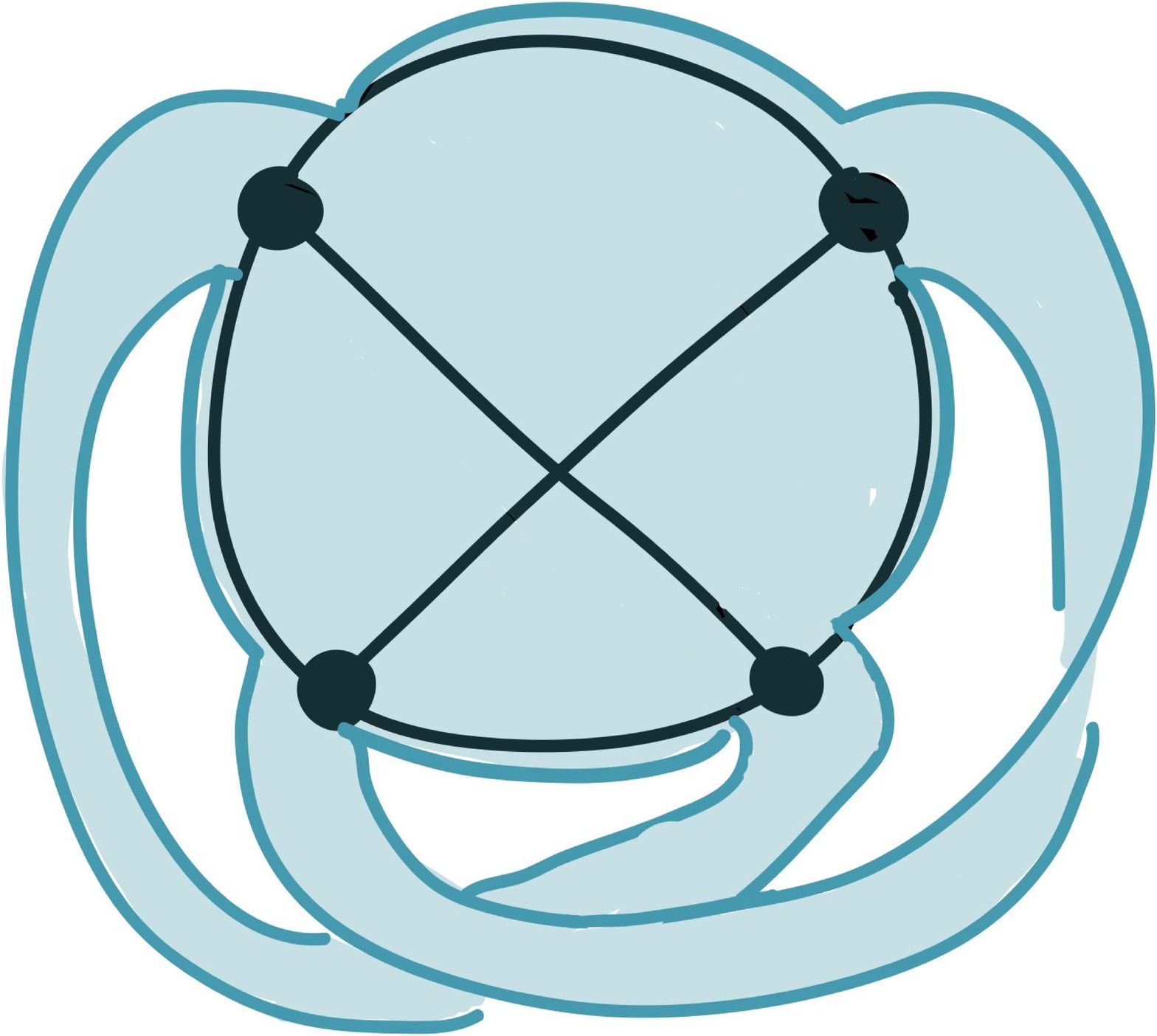}} \end{array} & \begin{array}{c} \scalebox{.05}{\psfig{figure=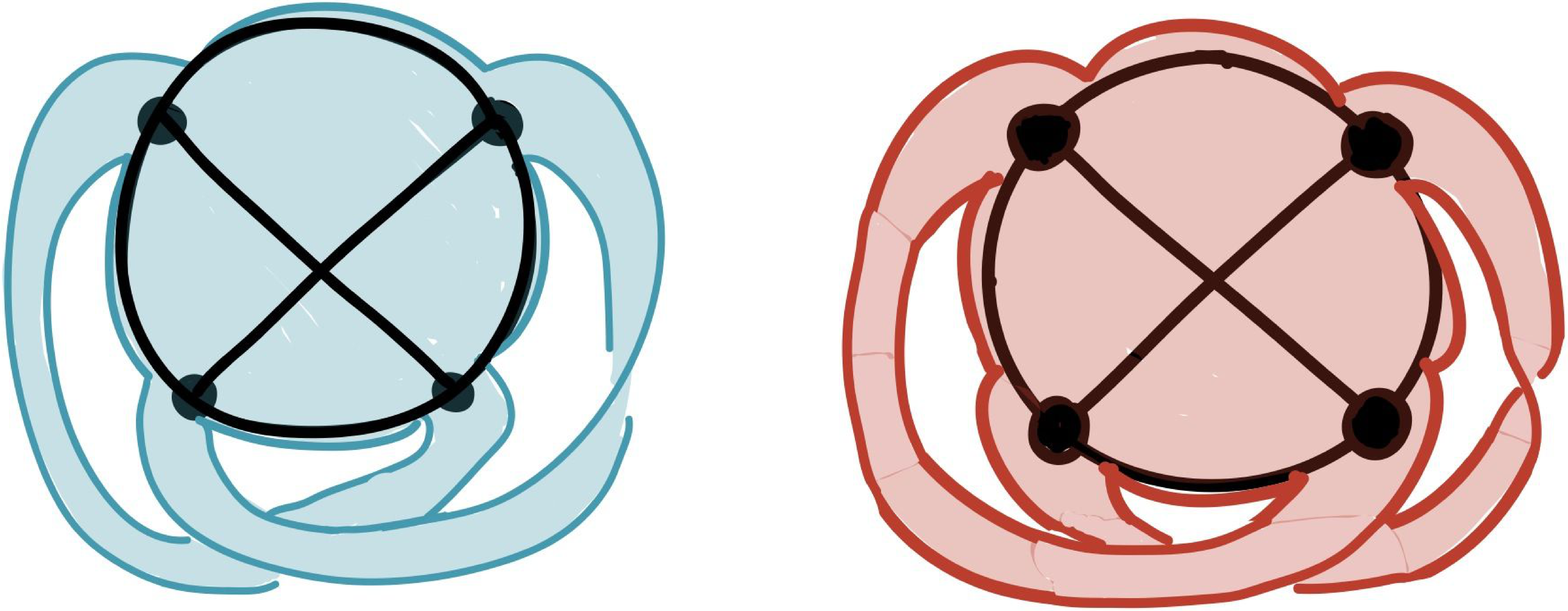}} \end{array} \to & \begin{array}{c} \scalebox{.05}{\psfig{figure=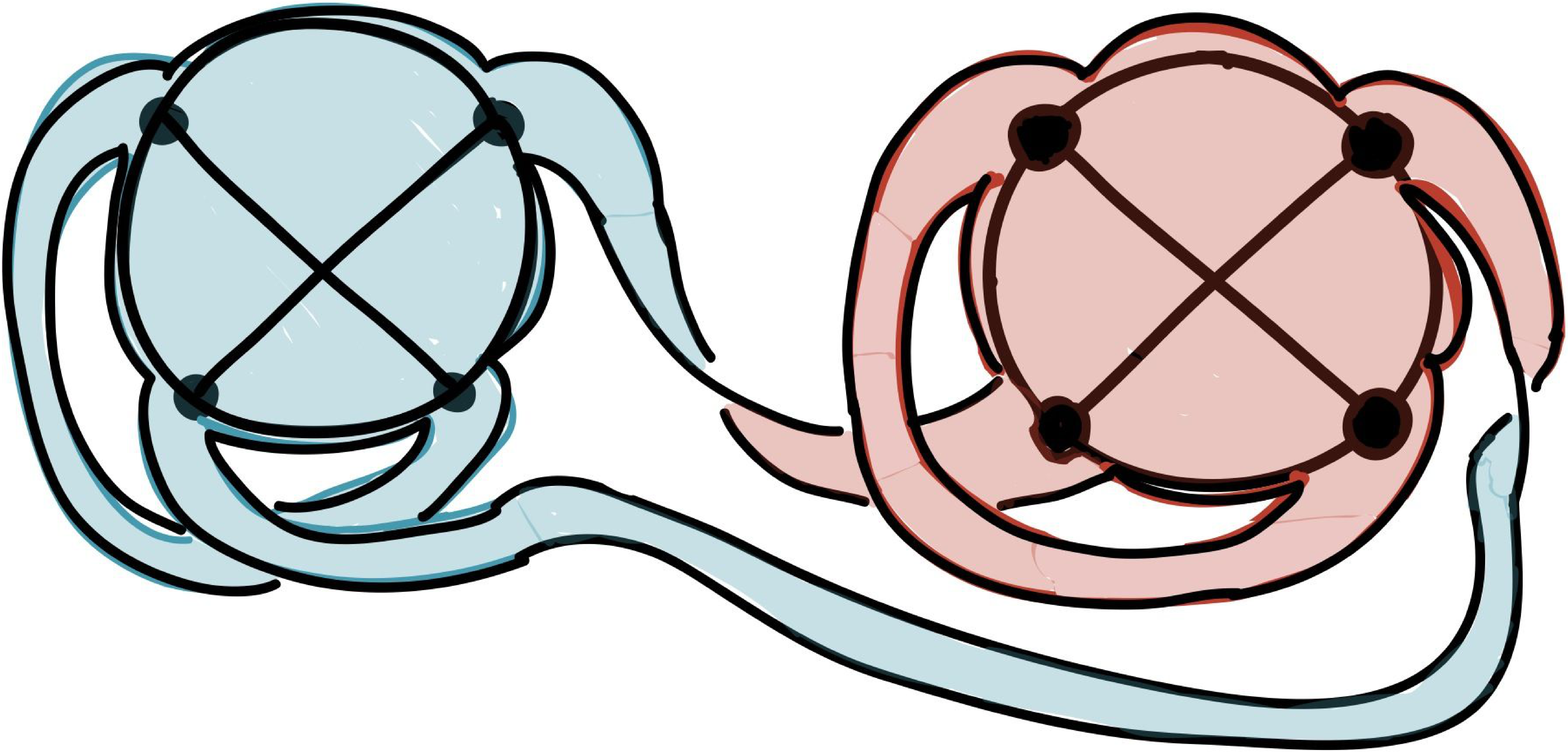}} \end{array}\\
\text{The Non-Orientable Surface } \Sigma_D(S)  & \Sigma_D(S) \text{ and a copy} & \text{Cut and sew}\\ 
\end{array}
\]
\caption{The construction of $\Sigma_D^2(S)$.} \label{sigma2fig}
\end{figure}

Let $S=(S_o,S_u,S_{\emptyset})$ be a partial state of $D$. Let $A$ be the set of letters $a_i$ and $b_i$ and $\overline{A}$ denote the set of letters of the form $\bar{a}_i$, $\bar{b}_i$. Travelling counter-clockwise from $*$, we write $D_{\emptyset}$ as a word in these letters:
\[
\begin{array}{c}\scalebox{.5}{\begin{picture}(0,0)%
\includegraphics{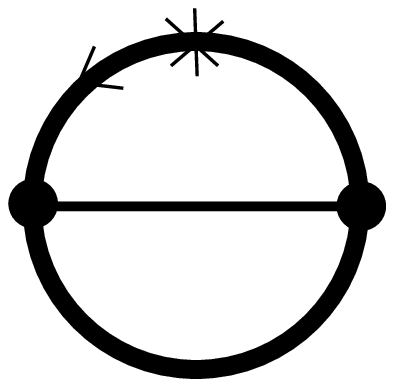}%
\end{picture}%
%
%
\setlength{\unitlength}{3947sp}%
\begingroup\makeatletter\ifx\SetFigFont\undefined%
\gdef\SetFigFont#1#2#3#4#5{%
  \reset@font\fontsize{#1}{#2pt}%
  \fontfamily{#3}\fontseries{#4}\fontshape{#5}%
  \selectfont}%
\fi\endgroup%
\begin{picture}(2155,2242)(349,-1994)
\put(364,-699){\makebox(0,0)[lb]{\smash{{\SetFigFont{20}{24.0}{\rmdefault}{\mddefault}{\updefault}{$a_j$}%
}}}}
\put(2489,-736){\makebox(0,0)[lb]{\smash{{\SetFigFont{20}{24.0}{\rmdefault}{\mddefault}{\updefault}{$b_j$}%
}}}}
\put(451,-49){\makebox(0,0)[lb]{\smash{{\SetFigFont{20}{24.0}{\rmdefault}{\mddefault}{\updefault}{$W_1$}%
}}}}
\put(1189,-1874){\makebox(0,0)[lb]{\smash{{\SetFigFont{20}{24.0}{\rmdefault}{\mddefault}{\updefault}{$W_2$}%
}}}}
\put(2214,-74){\makebox(0,0)[lb]{\smash{{\SetFigFont{20}{24.0}{\rmdefault}{\mddefault}{\updefault}{$W_3$}%
}}}}
\end{picture}%
}\end{array} \to  W_1 a_j W_2 b_j W_3,
\]
where $W_k$ is a possibly empty word in the characters from $A\backslash\{a_j,b_j\}$. For any word $W$ in the letters from $A$, let $\bar{W}$ denote the word $W$ written backwards with each character $x$ from $A$ written as $\bar{x} \in \overline{A}$. We define two words $W^f_j(S)$ and $W^s_j(S)$ as follows.
\begin{eqnarray*}
W^f_j(S) &=& W_1\bar{W}_2\bar{a}_j\bar{W}_1\bar{W}_3 W_2 b_j W_3, \\
W^s_j(S) &=& W_1 a_j W_2 \bar{W}_1 \bar{W}_3 \bar{b}_j \bar{W}_2 W_3.
\end{eqnarray*}
Label $4n-2$ points counter-clockwise on $S^1$ according to the left-to-right order of the letters in the word $W^f_j(S)$ or $W^s_j(S)$.  We define the double cover Gauss diagrams $D_j^f(S)$ or $D_j^s(S)$, respectively, by drawing chords as follows.
\begin{enumerate}
\item For $i \ne j$, if $i\in S_o$, then $a_ib_i$ and $\bar{a}_i \bar{b}_i$ are chords.
\item For $i \ne j$, if $i \in S_u$, then $\bar{a}_ib_i$ and $a_i \bar{b}_i$ are chords.
\item In $D_j^f(S)$, $\bar{a}_j b_j$ is a chord.  In $D_j^s(S)$, $a_j \bar{b}_j$ is a chord.
\end{enumerate} 

\begin{lemma} Let $S=(S_o,S_u,S_{\emptyset})$ be a smoothing with $|S_u| \ge 1$. If $j \in S_u$, then:
\[
\#_{D_j^s(S)}(O)=2 \cdot \#_D(S) =\#_{D_j^f(S)}(O). 
\]
\end{lemma}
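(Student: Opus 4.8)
The plan is to recognize $D_j^f(S)$ and $D_j^s(S)$ as chord-diagram presentations of the orientable double cover $\Sigma_D^2(S)$ constructed in Section~\ref{dblcover_sec}, and then to count its boundary components by covering-space theory. Recall that for any chord diagram $D'$ the number $\#_{D'}(O)$ equals the number of boundary components of $\Sigma_{D'}(O)$, a disk with one untwisted band per chord.

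\textbf{Step 1 (the factor of two is covering theory).} By construction $\Sigma_D^2(S)$ is an orientable double cover of $\Sigma_D(S)$ (the covering map collapses the red sheet onto the blue and is a homeomorphism on each of the two reglued halves of every twisted band). Since $|S_u|\ge 1$, the base $\Sigma_D(S)$ is connected and non-orientable. A disconnected double cover of a connected space is a disjoint union of two copies of the base, which here would be non-orientable; hence $\Sigma_D^2(S)$ is in fact connected, and a connected orientable double cover of a connected non-orientable surface is \emph{the} orientation double cover. Consequently a loop of $\Sigma_D(S)$ lifts to a loop of $\Sigma_D^2(S)$ exactly when it is orientation preserving. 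The covering restricts to a double cover $\partial\Sigma_D^2(S)\to\partial\Sigma_D(S)$, and every boundary circle of a surface has an annular collar and is therefore orientation preserving; so each boundary circle of $\Sigma_D(S)$ has exactly two disjoint lifts, giving
\[
\#\bigl(\partial\Sigma_D^2(S)\bigr)=2\,\#\bigl(\partial\Sigma_D(S)\bigr)=2\,\#_D(S).
\]

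\textbf{Step 2 (identify the two double covers with $\Sigma_D^2(S)$).} Writing the boundary of the disk underlying $D_{\emptyset}$ as $W_1a_jW_2b_jW_3$, I would trace the cut-and-sew construction and verify: an untwisted band $i$ lifts to the two untwisted bands recorded by the chords $a_ib_i$ and $\bar a_i\bar b_i$; a twisted band $i\ne j$ becomes, after cutting and regluing crosswise, the two untwisted sheet-joining bands recorded by $\bar a_ib_i$ and $a_i\bar b_i$; and cutting the distinguished twisted band $j$ is precisely what fuses the two disk sheets into a single disk, whose boundary word is $W_j^f(S)$ (respectively $W_j^s(S)$, according to which of the two reglued halves at $j$ is absorbed into the disk boundary), leaving the other half as the single chord $\bar a_jb_j$ (respectively $a_j\bar b_j$). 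This is exactly the recipe defining $D_j^f(S)$ and $D_j^s(S)$ — in particular it produces $4n-2$ marked points and all bands untwisted, so the state being computed is indeed $O$. Hence $\Sigma_{D_j^f(S)}(O)\cong\Sigma_D^2(S)\cong\Sigma_{D_j^s(S)}(O)$, and combined with Step~1 this yields $\#_{D_j^f(S)}(O)=2\,\#_D(S)=\#_{D_j^s(S)}(O)$.

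The hard part is Step~2: checking line by line that the explicit words $W_j^f(S)$, $W_j^s(S)$ together with the three chord rules genuinely encode $\Sigma_D^2(S)$. The delicate point is the bookkeeping at the band $j$ — why merging the two sheets there deletes one pair of endpoints (so $4n-2$ rather than $4n$) and leaves just one chord, and why every remaining band comes out untwisted — which is exactly where Figure~\ref{sigma2fig} and the word-and-chord definitions must be reconciled. Everything in Step~1 is formal.
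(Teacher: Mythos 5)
Your proposal is correct and follows essentially the same route as the paper's proof: both realize the orientable double cover $\Sigma_D^2(S)$ concretely as two mirrored copies of the disk with untwisted bands $a_ib_i$, $\bar{a}_i\bar{b}_i$ over chords not in $S_u$ and crosswise untwisted bands $a_i\bar{b}_i$, $b_i\bar{a}_i$ over chords in $S_u$, and then contract one of the two bands over the distinguished chord $j$ to fuse the two disks into a single disk, recognizing the result as $\Sigma_{D_j^s(S)}(O)$ or $\Sigma_{D_j^f(S)}(O)$ according to which band is contracted. The one place you go beyond the paper is your Step 1: the paper establishes the homeomorphism $\Sigma_{D_j^{f/s}(S)}(O)\cong\Sigma_D^2(S)$ but never explicitly justifies the factor of $2$, whereas you supply the covering-space argument (the cover is connected because a disconnected double cover of the connected non-orientable $\Sigma_D(S)$ would be non-orientable; boundary circles have annular collars, hence are orientation-preserving, hence each lifts to two disjoint circles), and after the cut-and-sew at the twisted bands this doubling is not immediate from the picture alone, so this is a genuine gain in rigor. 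Your Step 2 is left at the level of ``trace and verify,'' but that is the same level of detail as the paper's own verification, so there is no gap relative to the paper's standard.
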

\begin{proof} We construct a surface $\Sigma'$ as follows. Draw a copy of $\Sigma_D(S)$ in $\mathbb{R}^2 \times \mathbb{R}$ together with a line $l$ in $\mathbb{R}^2$ which does not intersect $D$. Let $\bar{D}$ denote the reflection of $D$ about $l$. Starting with $*$, label the endpoints of the chords in $D$ as follows. If the chord endpoints and band corners are labeled $a_i$, $b_i$, $a_i',a_i'',b_i',b_i''$ as usual, the corresponding points via reflection through $l$, are labelled $\bar{a}_i,\bar{b}_i,\bar{a}_i',\bar{a}_i'',\bar{b}_i',\bar{b}_i''$ respectively. If the chord $i$ is not in $S_u$, we draw an untwisted band from $a_i$ to $b_i$ and $\bar{a}_i$ to $\bar{b}_i$.  If the chord $i$ is in $S_u$, we draw an \emph{untwisted} band from $a_i$ to $\bar{b}_i$ and an \emph{untwisted} band from $b_i$ to $\bar{a}_i$. For the example in Figure \ref{sigma2fig}, this construction is illustrated on the left hand side of Figure \ref{dubeasy}.

\begin{figure}[h]
\[
\begin{array}{cc}
\scalebox{.09}{\psfig{figure=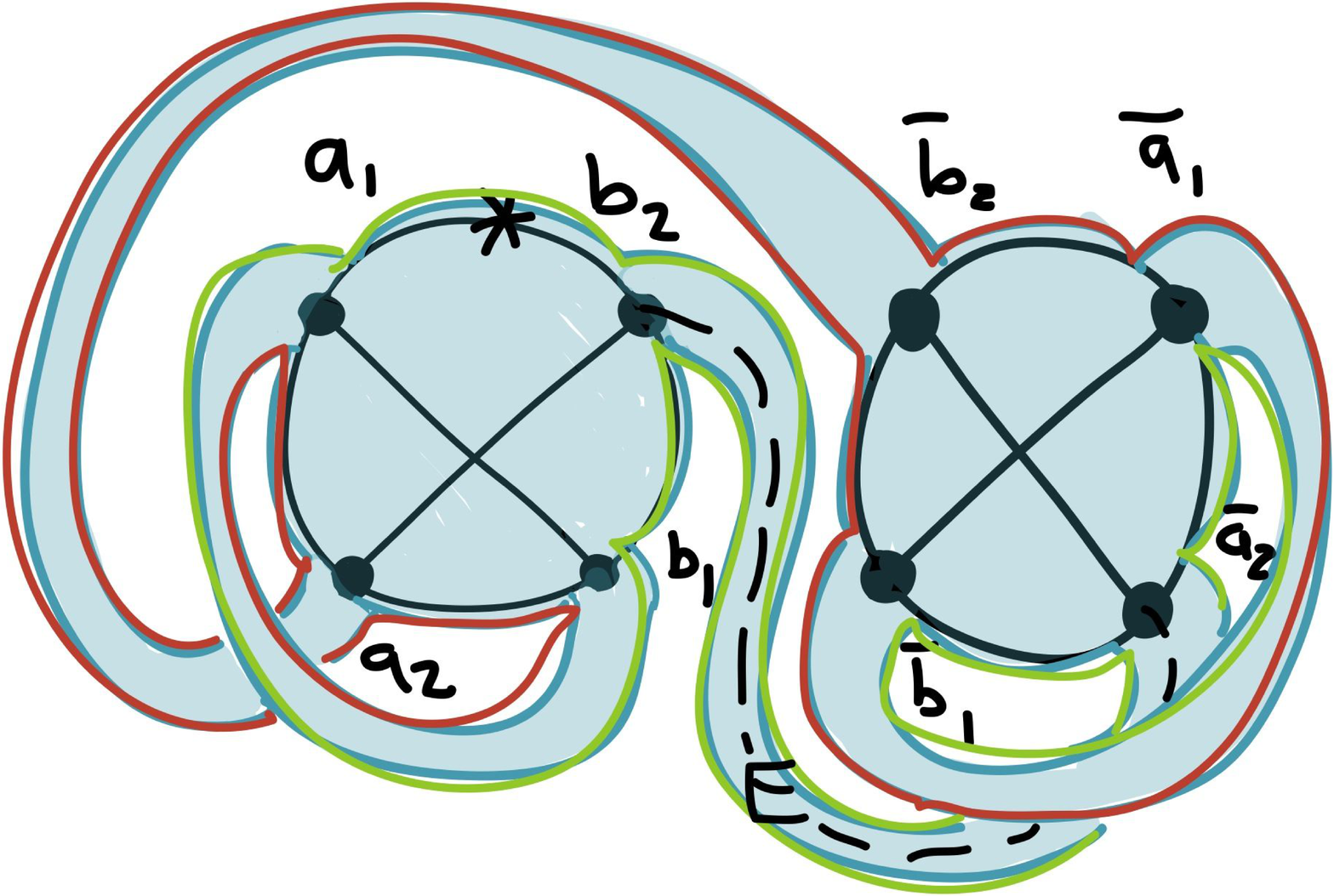}} & \scalebox{.09}{\psfig{figure=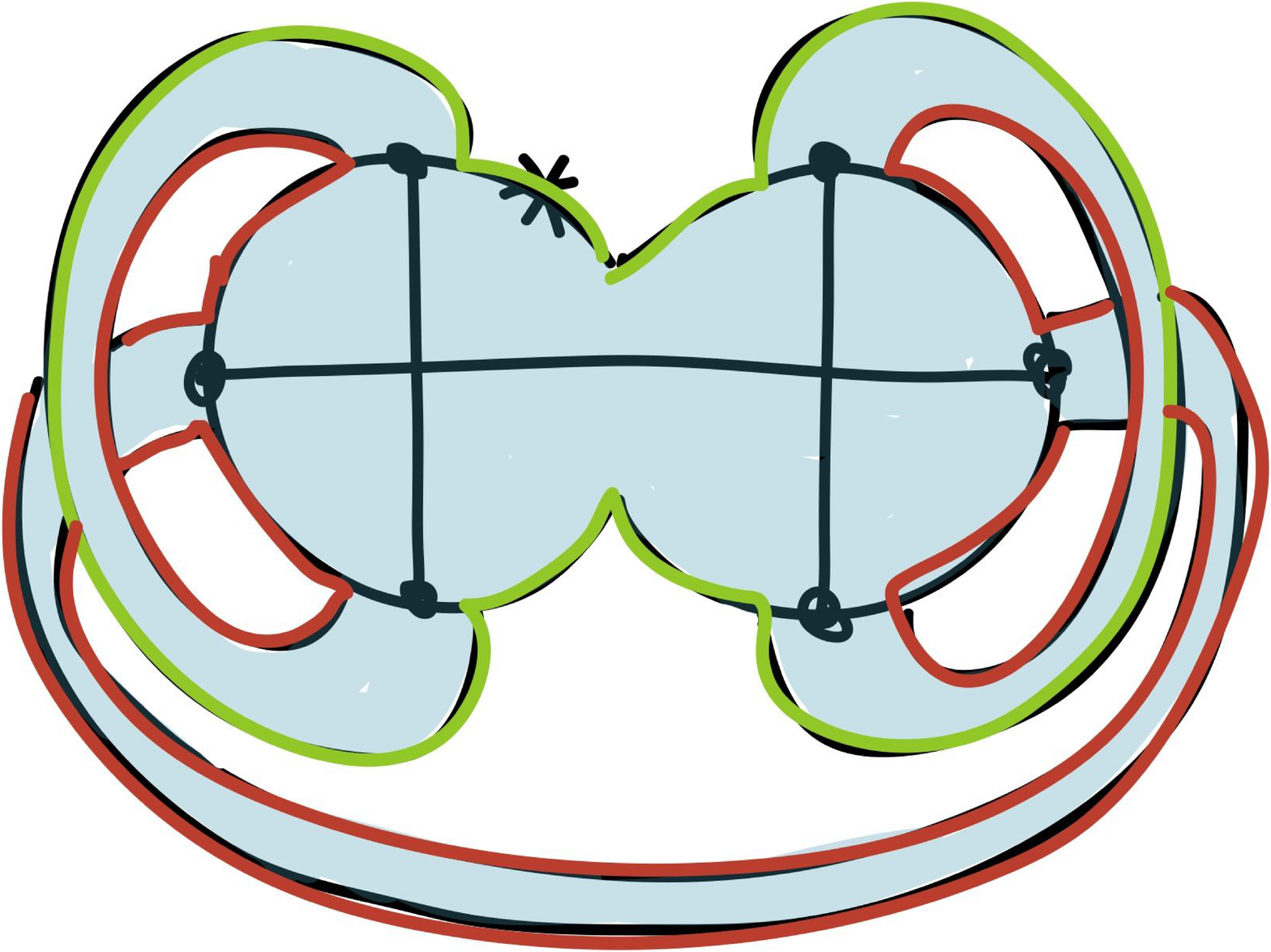}} \\
\end{array}
\]
\caption{Contracting along an edge to obtain a double cover.}\label{dubeasy}
\end{figure}

We must show that $\Sigma'$ is homeomorphic to $\Sigma^2_D(S)$ and that this information is encoded in the smoothing $O$ of both $D_j^f(S)$ and $D_j^s(S)$. First note that for the chord $j$ we have two untwisted bands.  The (f)irst is one from $a_j$ to $\bar{b}_j$ and the (s)econd is the one from $b_j$ to $\bar{a}_j$. First we will choose the second band. It will be denoted by $E$. The band is a rectangle $I \times I$ with one side on the left copy of $S^1$ and the other on the right copy of $S^1$. The other two sides of the rectangle are arcs going from one copy of $S^1$ to the other.  Now imagine taking the following trip.  Move from the basepoint on the first copy of $S^1$ to the point $b_j'$.  Then follow the arc $b_j'\bar{a}_j'$ to the second copy of $S^1$.  Continue counter-clockwise on the $\bar{D}$ copy of $S^1$ until the arc $\bar{a}_j''b_j''$ is reached.  Now continue along this arc to the $D$ copy of $S^1$.  Stop at the basepoint. Since the bands are connected exactly as in $\Sigma_D^2(S)$, we conclude that $\Sigma' \approx \Sigma_D^2(S)$.

Deform the embedding of the surface so that $E$ is contracted to an interval and so that the copies of $S^1$ are glued along this interval (see the right picture in Figure \ref{dubeasy}). This also identifies the endpoint of a chord in $D$ and a chord in $\bar{D}$ to create a new chord (but decreasing the total number of chords by one).  In addition, this gives a distinguished circle $S^1$. Indeed, the circle is the path taken by the trip defined in the previous paragraph. The resulting diagram is $D_j^s(S)$ (up to equivalence of chord diagrams). The smoothing $O$ of each of these diagrams is a surface homeomorphic to $\Sigma^2_D(S)$. Similarly, we may contract along the other untwisted band of $j$ to get $D_j^f(S)$.   
\end{proof}

\section{Computing Characteristic Polynomials} \label{skewspec} The previous sections have reduced the problem of counting loops of partial states  to finding characteristic polynomials of the skew-adjacency matrices of linearly ordered graphs. Fortunately, this partially falls under the purview of an existing mathematical theory.  It is one of the major accomplishments of spectral graph theory \cite{MR2571608,MR1440854}.  Unfortunately, the theory of symmetric adjacency matrices must be redone so that it applies to linearly ordered graphs. Many of the ideas from the symmetric case may nonetheless be applied to our situation. Typically, we will require stronger hypotheses to obtain similar results. The formulas we obtain will turn out to be a little different.

\subsection{Mirror Images} In the symmetric case, the properties of the determinant guarantee that the order of the vertices does not affect the characteristic polynomial. This is not the case for the skew-symmetric adjacency matrix.  However, it is true in the case of the mirror image.

\begin{theorem} \label{mirrorthm} Let $D \in \mathscr{D}(S^1,*)$ and let $\bar{D}$ denote its mirror image. Here, $\bar{D}$ is labelled by the canonical ordering moving CCW from the basepoint.  Let $\vec{G}$ be the linearly ordered intersection graph of $D$ and $\bar{G}$ the linearly ordered intersection graph of $\bar{D}$.  Then:
\[
\vec{P}_{\vec{G}}(x)=\vec{P}_{\bar{G}}(x).
\]
\end{theorem}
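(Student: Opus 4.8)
The plan is to track how the mirror image affects the canonical labelling, and then to reduce the statement to a determinant identity. Taking the mirror image of a chord diagram reverses the cyclic order of the $2n$ chord endpoints around $S^1$; equivalently, cutting $D$ open at $*$ produces a linear sequence of its endpoints, and $\bar D$ is obtained by reversing that sequence, the basepoint and the chords themselves being unchanged. Since the canonical ordering of a diagram in $\mathscr{D}(S^1,*)$ simply lists the chords in the order their \emph{first} endpoints are met, the $D$-label of a chord $i$ is the rank of $a_i$ in the linear order of $D$, while its $\bar{D}$-label is the rank, in the reversed order, of $b_i$ --- the endpoint of $i$ that now comes first. I would write $\pi$ and $\rho$ for these two labellings, and let $\sigma=\rho\circ\pi^{-1}$ be the induced relabelling permutation, with permutation matrix $P_\sigma$ (a $1$ in position $(\sigma(k),k)$).

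The combinatorial heart of the argument is how $\sigma$ acts on the edges of the intersection graph, which is the same simple graph for $D$ and $\bar D$ since interlacement of chords is a reflection-invariant condition. If chords $i$ and $j$ cross and $a_i<a_j$, then the interleaving of the two chords forces the pattern $a_i<a_j<b_i<b_j$; in particular $b_i<b_j$, and conversely. Hence for crossing chords, $\pi(i)<\pi(j)$ iff $b_i<b_j$ iff $\rho(i)>\rho(j)$: passing to the mirror image reverses the relative order, and therefore the induced edge direction, of \emph{every} edge of the intersection graph --- even though $\sigma$ need not be the global reversal $k\mapsto n+1-k$ (for instance, a chord diagram all of whose chords are nested is its own based mirror image, while $k\mapsto n+1-k$ is nontrivial). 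Since non-adjacent pairs of chords contribute $0$ to both skew-adjacency matrices, reading off entries gives precisely
\[
\vec{A}_{\bar{G}} = -\,P_\sigma\,\vec{A}_{\vec{G}}\,P_\sigma^{t}.
\]

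The conclusion is then a short determinant computation. Using $P_\sigma P_\sigma^{t}=I$ and $\det P_\sigma=\pm 1$,
\[
\vec{P}_{\bar{G}}(x)=\det(xI-\vec{A}_{\bar{G}})=\det( P_\sigma(xI+\vec{A}_{\vec{G}})P_\sigma^{t})=\det(xI+\vec{A}_{\vec{G}}),
\]
and since $\vec{A}_{\vec{G}}$ is skew-symmetric we have $\vec{A}_{\vec{G}}^{t}=-\vec{A}_{\vec{G}}$, so by invariance of the characteristic polynomial under transposition $\det(xI+\vec{A}_{\vec{G}})=\det(xI+\vec{A}_{\vec{G}}^{t})=\det(xI-\vec{A}_{\vec{G}})=\vec{P}_{\vec{G}}(x)$, as desired.

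The step I expect to be the main obstacle is the edge-orientation claim of the second paragraph: the temptation is to assume that the mirror image globally reverses the canonical order, which is false in general, and one must instead argue pairwise, using the interleaving pattern of two crossing chords to see that precisely the order of the two endpoints relevant to \emph{that} edge gets reversed. Once the identity $\vec{A}_{\bar{G}}=-P_\sigma\vec{A}_{\vec{G}}P_\sigma^{t}$ is in hand, nothing remains but the three-line determinant identity, which rests only on skew-symmetry and transpose-invariance of the characteristic polynomial.
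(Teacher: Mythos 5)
Your proof is correct and follows essentially the same route as the paper: the paper's key relation $a_{ij}=\bar{a}_{\tau(j)\tau(i)}$ is exactly your identity $\vec{A}_{\bar{G}}=-P_\sigma \vec{A}_{\vec{G}}P_\sigma^{t}$ combined with skew-symmetry, and the paper's Leibniz-formula manipulation is just a hands-on verification of the conjugation- and transpose-invariance of the characteristic polynomial that you invoke directly. If anything, you are more careful than the paper, which asserts the edge-reversal relation without spelling out the interleaving argument you give in your second paragraph.
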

\begin{proof} Suppose that $D$ has $n$ chords.  We use the Leibniz formula for the determinant of a matrix. For the label $i$ of $D$, let $\tau(i)$ denote the canonical label of the mirror image of the chord labelled $i$ in $\bar{D}$. Then $\tau \in \mathbb{S}_n$, the symmetric group on $n$ letters.  Let $A$ be the canonical adjacency matrix of $D$ and $\bar{A}$ the canonical adjacency matrix of $\bar{D}$. Then we have that $a_{ij}=\bar{a}_{\tau(j) \tau(i)}$. Let $A'=xI-A$. Then according to the Leibniz formula for $\det(A')$,
\begin{eqnarray*}
\sum_{\sigma \in \mathbb{S}_n} \text{sign}(\sigma) \prod_{i=1}^n a_{i \sigma(i)}' &=&
\sum_{\sigma \tau^{-1} \in (\mathbb{S}_n) \tau^{-1}} \text{sign}(\sigma \tau^{-1}) \prod_{i=1}^n a_{i \sigma\tau^{-1}(i)}' \\
&=& \sum_{\tau^{-1} \sigma \in  \tau^{-1}(\mathbb{S}_n)} \text{sign}(\tau^{-1}\sigma) \prod_{i=1}^n a_{i \tau^{-1} \sigma(i)}' \\
&=& \sum_{\tau^{-1} \sigma \in  \tau^{-1}(\mathbb{S}_n)} \text{sign}(\tau^{-1}\sigma) \prod_{i=1}^n \bar{a}_{\sigma(i)\tau(i)}' \\
&=& \sum_{\tau^{-1} \sigma \in  \tau^{-1}(\mathbb{S}_n)} \text{sign}(\tau^{-1}\sigma) \prod_{j=1}^n \bar{a}_{j \tau \sigma^{-1}(j)}'\\
&=& \sum_{\tau^{-1} \sigma \in  \tau^{-1}(\mathbb{S}_n)} \text{sign}(\tau\sigma^{-1})  \prod_{j=1}^n \bar{a}_{j \tau \sigma^{-1}(j)}' \\
&=& \sum_{\tau \sigma^{-1} \in  \tau(\mathbb{S}_n)^{-1}} \text{sign}(\tau\sigma^{-1}) \prod_{j=1}^n \bar{a}_{j \tau \sigma^{-1}(j)}' \\
&=& \sum_{\gamma \in  \mathbb{S}_n} \text{sign}(\gamma) \prod_{j=1}^n \bar{a}_{j \gamma(j)}' \\
&=& \det(xI-\bar{A})
\end{eqnarray*}
This completes the proof. \end{proof}
\subsection{Adding an Edge} Let $\vec{G}$ be a linearly ordered graph with $n$ vertices. Let $u,v \in V(\vec{G})$ with $l(v)=l(u)+1$ and $u,v$ not adjacent. We denote by $\vec{G}+uv$ the linearly ordered graph obtained from $\vec{G}$ by adding a directed edge from $u$ to $v$. Denote by $\vec{G}-u$ the graph which is obtained from $\vec{G}$ by deleting the vertex $u$ and renumbering in the obvious way.  Finally, for any square matrix $X$, we denote by $\theta_{uv}(X)$ the $(u,v)$ entry of $\text{adj}(X)$, the \emph{adjugate} of $X$. Here, we are using the notation of \cite{MR2571608,MR1440854}. For the results in this section, the reader is invited to compare these with the similar results for the symmetric adjacency matrix case in equations (5.1.4) and (5.1.5) of \cite{MR1440854}.  

\begin{theorem} \label{addedge} With the notations above, the characteristic polynomial of the skew-adjacency matrix of $\vec{G}+uv$ is given by:
\[
\vec{P}_{\vec{G}+uv}(x)=\vec{P}_{\vec{G}-u-v}(x)+\vec{P}_{\vec{G}}(x)+\theta_{uv}(xI-A_{\vec{G}})-\theta_{vu}(xI-A_{\vec{G}}).
\]
\end{theorem}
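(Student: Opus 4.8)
The plan is to work directly with the skew-adjacency matrix and exploit multilinearity of the determinant in two columns. Write $M = xI - \vec{A}_{\vec{G}}$, and let $E_{ij}$ be the $n\times n$ matrix with a $1$ in position $(i,j)$ and zeros elsewhere. Since $u$ and $v$ are not adjacent in $\vec{G}$ and $l(u)<l(v)$, adding the directed edge $u\to v$ changes only the $(u,v)$ and $(v,u)$ entries of the skew-adjacency matrix, from $0$ to $+1$ and to $-1$ respectively; hence $\vec{A}_{\vec{G}+uv} = \vec{A}_{\vec{G}} + E_{uv} - E_{vu}$, and therefore $xI - \vec{A}_{\vec{G}+uv} = M - E_{uv} + E_{vu}$. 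In column language, $M - E_{uv} + E_{vu}$ is obtained from $M$ by replacing its $u$-th column $M_u$ by $M_u + e_v$ and its $v$-th column $M_v$ by $M_v - e_u$, where $e_k$ is the $k$-th standard basis column.

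First I would expand $\det(M - E_{uv} + E_{vu})$ by multilinearity in columns $u$ and $v$ simultaneously, obtaining the four determinants
\[
\det M \;-\; \det(M;\, v\!\to\! e_u) \;+\; \det(M;\, u\!\to\! e_v) \;-\; \det(M;\, u\!\to\! e_v,\; v\!\to\! e_u),
\]
where $\det(M;\cdots)$ denotes $M$ with the indicated columns replaced. The first term is $\vec{P}_{\vec{G}}(x)$ by definition. For the second and third, a single cofactor expansion along the replaced column identifies them with adjugate entries: with the convention $\mathrm{adj}(X)_{ij} = (-1)^{i+j}\det\!\big(X \text{ with row } j,\ \text{column } i \text{ deleted}\big)$, one checks $\det(M;\, v\!\to\! e_u) = \theta_{vu}(M)$ and $\det(M;\, u\!\to\! e_v) = \theta_{uv}(M)$. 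For the fourth term I would expand twice — first along column $u$ (which is $e_v$), then along the remaining replaced column (which is $e_u$) — and collect the two sign factors; since $u\ne v$ the exponents combine to an odd number, so the result is $-\det N$, where $N$ is $M$ with rows and columns $u,v$ deleted. Finally $N = xI_{n-2} - (\vec{A}_{\vec{G}})_{\widehat{u}\widehat{v},\widehat{u}\widehat{v}}$, and the principal submatrix of the skew-adjacency matrix of a linearly ordered graph obtained by deleting two symmetrically placed rows and columns is exactly the skew-adjacency matrix of the induced linearly ordered subgraph $\vec{G}-u-v$, so $\det N = \vec{P}_{\vec{G}-u-v}(x)$. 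Substituting back yields
\[
\vec{P}_{\vec{G}+uv}(x) = \vec{P}_{\vec{G}}(x) - \theta_{vu}(xI - \vec{A}_{\vec{G}}) + \theta_{uv}(xI - \vec{A}_{\vec{G}}) + \vec{P}_{\vec{G}-u-v}(x),
\]
which is the claimed identity.

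The only delicate point is the bookkeeping of signs: matching the two column-replacement determinants to $\theta_{uv}$ and $\theta_{vu}$ so that precisely the combination $\theta_{uv} - \theta_{vu}$ survives (not $\theta_{uv}+\theta_{vu}$ nor a transposed pair), and confirming in the double cofactor expansion that the overall sign multiplying $\vec{P}_{\vec{G}-u-v}(x)$ is $+1$ rather than $-1$. I would pin these down first on the small instance $u=k$, $v=k+1$ — the natural test given the hypothesis $l(v)=l(u)+1$ — and then observe that the sign computations use only $l(u)<l(v)$, so nothing changes in general. Note that the skew-symmetry of $\vec{A}_{\vec{G}}$ is not needed here; the identity is a purely matrix-theoretic statement about a perturbation supported on the symmetric pair of positions $(u,v)$ and $(v,u)$.
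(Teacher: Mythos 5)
Your proof is correct and follows essentially the same route as the paper's: both expand $\det(xI-\vec{A}_{\vec{G}+uv})$ multilinearly in the two perturbed columns and identify the four resulting determinants with $\vec{P}_{\vec{G}}(x)$, $\vec{P}_{\vec{G}-u-v}(x)$, and the adjugate entries $\theta_{uv}$ and $\theta_{vu}$ (the paper merely writes the expansion in block-matrix form rather than via the perturbation $E_{uv}-E_{vu}$). Your sign bookkeeping, including the $+1$ coefficient on $\vec{P}_{\vec{G}-u-v}(x)$ from the double cofactor expansion, checks out.
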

\begin{proof} The proof is by multi-linear expansion of the determinant. Suppose that the skew-adjacency matrix of $\vec{G}$ is given by:
\[
\vec{A}_{\vec{G}}=\left[ 
\begin{array}{c|c|c|c}
A & -a & -b & B \\ \hline
a^t & 0 & 0 & -c^t \\ \hline
b^t & 0 & 0 & -d^t \\ \hline
C & c & d & D
\end{array}
\right],\,\,\,
xI-\vec{A}_{\vec{G}}=\left[ 
\begin{array}{c|c|c|c}
A' & a & b & B' \\ \hline
-a^t & x & 0 & c^t \\ \hline
-b^t & 0 & x & d^t \\ \hline
C' & -c & -d & D'
\end{array}
\right].
\] 
The multi-linear expansion of $|xI-\vec{A}_{\vec{G}+uv}|$ is then:
\begin{eqnarray*}
\left|
\begin{array}{c|c|c|c}
A' & a & b & B' \\ \hline
-a^t & x & -1 & c^t \\ \hline
-b^t & 1 & x & d^t \\ \hline
C' & -c & -d & D'
\end{array}
\right| &=& \left|
\begin{array}{c|c|c|c}
A' & \vec{0} & b & B' \\ \hline
-a^t & 0 & 0 & c^t \\ \hline
-b^t & 1 & x & d^t \\ \hline
C' & \vec{0} & -d & D'
\end{array}
\right|+\left|
\begin{array}{c|c|c|c}
A' & \vec{0} & \vec{0} & B' \\ \hline
-a^t & 0 & -1 & c^t \\ \hline
-b^t & 1 & 0 & d^t \\ \hline
C' & \vec{0} & \vec{0} & D'
\end{array}
\right|\\
&+& \left|
\begin{array}{c|c|c|c}
A' & a & b & B' \\ \hline
-a^t & x & 0 & c^t \\ \hline
-b^t & 0 & x & d^t \\ \hline
C' & -c & -d & D'
\end{array}
\right|-\left|
\begin{array}{c|c|c|c}
A' & a & \vec{0} & B' \\ \hline
-a^t & x & 1 & c^t \\ \hline
-b^t & 0 & 0 & d^t \\ \hline
C' & -c & \vec{0} & D'
\end{array}
\right|.
\end{eqnarray*}
The first determinant on the right is $\theta_{uv}(xI-\vec{A}_{\vec{G}})$.  The second expression is $\vec{P}_{\vec{G}-u-v}(x)$.  The third expression is $\vec{P}_{\vec{G}}(x)$. The last expression is $\theta_{vu}(xI-\vec{A}_{\vec{G}})$.  This completes the proof of the theorem.
\end{proof}
\begin{corollary} \label{edgdelcorr} Let $\vec{G}$ be a linearly ordered graph with vertices $u$, $v$, such that  $l(v)=l(u)+1$, $u \not\sim v$, and a vertex in $\vec{G}$ is adjacent to $u$ if and only if it is adjacent to $v$.  Then:
\[
\vec{P}_{\vec{G}+uv}(x)=\vec{P}_{\vec{G}-u-v}(x)+\vec{P}_{\vec{G}}(x).
\]
\end{corollary}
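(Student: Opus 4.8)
The plan is to invoke Theorem \ref{addedge} and show that the extra hypothesis makes the two adjugate terms cancel. Theorem \ref{addedge} applies verbatim (its hypotheses $l(v)=l(u)+1$ and $u\not\sim v$ are assumed here), so
\[
\vec{P}_{\vec{G}+uv}(x)=\vec{P}_{\vec{G}-u-v}(x)+\vec{P}_{\vec{G}}(x)+\theta_{uv}(xI-\vec{A}_{\vec{G}})-\theta_{vu}(xI-\vec{A}_{\vec{G}}),
\]
and it remains to prove that the hypothesis ``every vertex of $\vec{G}$ is adjacent to $u$ iff it is adjacent to $v$'' forces $\theta_{uv}(xI-\vec{A}_{\vec{G}})=\theta_{vu}(xI-\vec{A}_{\vec{G}})$.

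The key observation I would establish first is that $\vec{A}_{\vec{G}}$ is invariant under simultaneously transposing its $u$-th and $v$-th rows and its $u$-th and $v$-th columns. Let $T$ be the permutation matrix of the transposition $l(u)\leftrightarrow l(v)$. For $i,j\notin\{u,v\}$ the $(i,j)$ entry is untouched. For $j\notin\{u,v\}$, the row-$u$ and row-$v$ entries in column $j$ agree: they are both $0$ if $j$ is not a common neighbor, and if $j$ is a common neighbor they carry the same sign because, since $l(v)=l(u)+1$, the label $l(j)$ lies either below both of $l(u),l(v)$ or above both of them, so the two directed edges have the same orientation. Finally the $2\times2$ block of $\vec{A}_{\vec{G}}$ on $\{u,v\}$ is zero (null diagonal and $u\not\sim v$). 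Hence $T\vec{A}_{\vec{G}}T=\vec{A}_{\vec{G}}$, and so $TXT=X$ for $X:=xI-\vec{A}_{\vec{G}}$.

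Next I would transfer this symmetry to the adjugate. For all but finitely many $x$ (the roots of $\vec{P}_{\vec{G}}$) the matrix $X$ is invertible, and $TXT=X$ together with $T=T^{-1}$ gives $TX^{-1}T=X^{-1}$, whence $T\,\mathrm{adj}(X)\,T=\det(X)\,TX^{-1}T=\det(X)X^{-1}=\mathrm{adj}(X)$. Since the entries of both sides are polynomials in $x$, the identity $T\,\mathrm{adj}(X)\,T=\mathrm{adj}(X)$ holds identically. Comparing the $(u,v)$ entries and using that $T$ interchanges the indices $l(u)$ and $l(v)$ yields $\theta_{uv}(X)=\mathrm{adj}(X)_{uv}=\mathrm{adj}(X)_{vu}=\theta_{vu}(X)$, and substituting into the formula from Theorem \ref{addedge} removes the last two terms and proves the corollary.

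I expect the only real obstacle to be the careful verification of $T$-invariance in the second step: one must check that the $\pm 1$ sign conventions of the skew-adjacency matrix, not merely the adjacency pattern, are preserved by the swap — and this is precisely where the hypothesis $l(v)=l(u)+1$ is essential, since it places $u$ and $v$ on the same side of every other vertex in the linear order. (If one prefers to avoid the polynomial-continuity step for the adjugate, the same conclusion follows by checking directly that relabeling the index sets of the $(v,u)$-minor of $X$ by the transposition $l(u)\leftrightarrow l(v)$ — which is order-preserving on both its row set and its column set — turns it into the $(u,v)$-minor of $X$ entry for entry, so the two minors, and hence $\theta_{uv}(X)$ and $\theta_{vu}(X)$, coincide.)
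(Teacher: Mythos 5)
Your proof is correct and follows essentially the same route as the paper: both invoke Theorem \ref{addedge} and then show the hypothesis forces $\theta_{uv}(xI-\vec{A}_{\vec{G}})=\theta_{vu}(xI-\vec{A}_{\vec{G}})$. The paper phrases this cancellation more tersely (in the block notation of Theorem \ref{addedge}, the hypothesis gives $a=b$ and $c=d$, making the two adjugate determinants equal after a row and column swap), which is exactly the $u\leftrightarrow v$ symmetry you verify via conjugation by the transposition matrix.
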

\begin{proof} Consider the multi-linear expansion given in Theorem \ref{addedge}.  By hypothesis, we have $a=b$ and $c=d$.  Hence, we have that $\theta_{uv}(xI-\vec{A}_{\vec{G}})=\theta_{vu}(xI-\vec{A}_{\vec{G}})$. 
\end{proof}   
\subsection{Joins} Let $\vec{G}_1$ and $\vec{G}_2$ be linearly ordered graphs with $n_1$ and $n_2$ vertices respectively.  We may form the linearly ordered graph $\vec{G}_1 \sqcup \vec{G}_2$ as follows. The underlying graph is the disjoint union of $G_1$ and $G_2$.  Vertices corresponding to those in $G_1$ are labelled as in $\vec{G}_1$. A vertex $v$ in $G_2$ is labelled $l(v)+n_1$ in $\vec{G}_1 \sqcup \vec{G}_2$. The following result is exactly the same as the symmetric adjacency matrix case \cite{MR2571608}.

\begin{theorem} The characteristic polynomial of $\vec{G}_1 \sqcup \vec{G}_2$ is given by:
\[
\vec{P}_{\vec{G}_1 \sqcup \vec{G}_2}(x)=\vec{P}_{\vec{G}_1}(x)\cdot \vec{P}_{\vec{G}_2}(x).
\]
\end{theorem}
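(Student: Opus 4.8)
The plan is to write the skew-adjacency matrix of $\vec{G}_1 \sqcup \vec{G}_2$ in block form and read off the characteristic polynomial. Since the disjoint union places all vertices of $\vec{G}_1$ before all vertices of $\vec{G}_2$ in the linear order, and there are no edges between the two pieces, the skew-adjacency matrix is block diagonal:
\[
\vec{A}_{\vec{G}_1 \sqcup \vec{G}_2} = \left[ \begin{array}{c|c} \vec{A}_{\vec{G}_1} & 0 \\ \hline 0 & \vec{A}_{\vec{G}_2} \end{array} \right].
\]
The key point here is that the relabelling convention ($v \in G_2$ becomes $l(v) + n_1$) is precisely what guarantees block diagonality: the rows and columns coming from $\vec{G}_1$ occupy the first $n_1$ indices with their original relative order, and those from $\vec{G}_2$ the last $n_2$ indices, again with their original relative order, so each diagonal block is exactly the skew-adjacency matrix of the respective linearly ordered graph. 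The off-diagonal blocks vanish because no chord/vertex of $G_1$ is adjacent to any chord/vertex of $G_2$.

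Next I would apply the elementary fact that the determinant of a block-triangular (in particular block-diagonal) matrix is the product of the determinants of the diagonal blocks. Writing $xI$ in the same block form, $xI - \vec{A}_{\vec{G}_1 \sqcup \vec{G}_2}$ is block diagonal with blocks $xI_{n_1} - \vec{A}_{\vec{G}_1}$ and $xI_{n_2} - \vec{A}_{\vec{G}_2}$, so
\[
\vec{P}_{\vec{G}_1 \sqcup \vec{G}_2}(x) = \det\!\big(xI - \vec{A}_{\vec{G}_1 \sqcup \vec{G}_2}\big) = \det\!\big(xI_{n_1} - \vec{A}_{\vec{G}_1}\big) \cdot \det\!\big(xI_{n_2} - \vec{A}_{\vec{G}_2}\big) = \vec{P}_{\vec{G}_1}(x)\cdot \vec{P}_{\vec{G}_2}(x).
\]
This completes the argument.

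There is really no main obstacle here — unlike the mirror-image and add-an-edge theorems, the join behaves exactly as in the symmetric case because block-diagonality of a determinant is insensitive to whether the blocks are symmetric or skew-symmetric. The only thing worth being careful about is confirming that the linear-order convention on $\vec{G}_1 \sqcup \vec{G}_2$ really does interleave no vertices of the two summands; once that is observed, the proof is a one-line determinant computation. If desired, one could also phrase it via the characteristic polynomial factoring over the two invariant subspaces $\mathbb{Q}^{n_1} \oplus 0$ and $0 \oplus \mathbb{Q}^{n_2}$, but the block-determinant phrasing is cleanest.
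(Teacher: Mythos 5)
Your proof is correct and is exactly the argument the paper intends: the paper omits the proof entirely, remarking only that the result is ``exactly the same as the symmetric adjacency matrix case,'' and the standard block-diagonal determinant computation you give (together with your correct observation that the relabelling convention $l(v)\mapsto l(v)+n_1$ makes the two diagonal blocks literally equal to $\vec{A}_{\vec{G}_1}$ and $\vec{A}_{\vec{G}_2}$) is that argument. Nothing is missing.
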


Suppose that we are given two linearly ordered graphs $\vec{G}_1$ and $\vec{G}_2$ with $n_1$ and $n_2$ vertices respectively. We may form the join, denoted, $\vec{G}_1 \vec{\nabla} \vec{G}_2$ by taking the disjoint union of $\vec{G}_1$ and $\vec{G}_2$, relabelling every vertex $v$ of $\vec{G}_2$ by $l(v)+n_1$ and connecting every vertex of $\vec{G}_1$ with every vertex of $\vec{G}_2$. The new edges are directed from $u \in V(\vec{G}_1)$ towards $v \in V(\vec{G}_2)$. Then the skew-adjacency matrix is given by:
\[
\left[ \begin{array}{cc}
\vec{A}_1 & J \\
-J^t  & \vec{A}_2
\end{array}
\right],
\] 
where $J$ denotes the matrix of appropriate dimensions having all ones.  An example of the join construction is given in Figure \ref{joinfig}.

\begin{figure}
\[
\begin{array}{ccc}
\scalebox{.5}{\psfig{figure=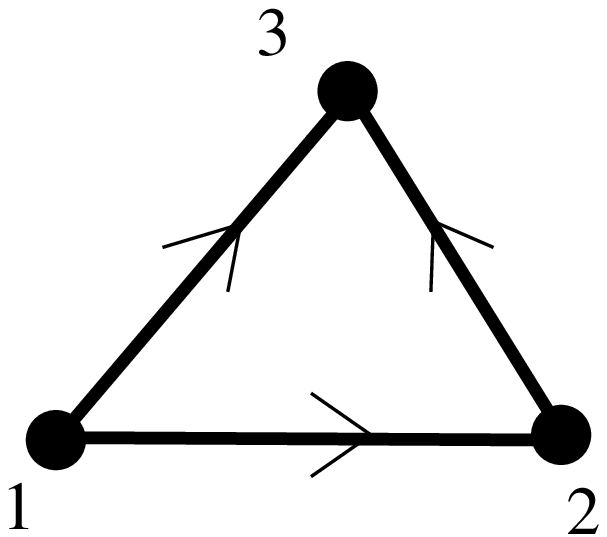}} & \scalebox{.5}{\psfig{figure=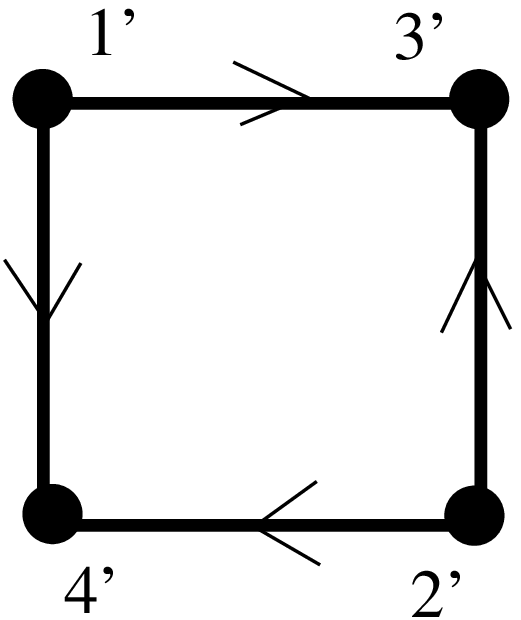}} & \scalebox{.35}{\psfig{figure=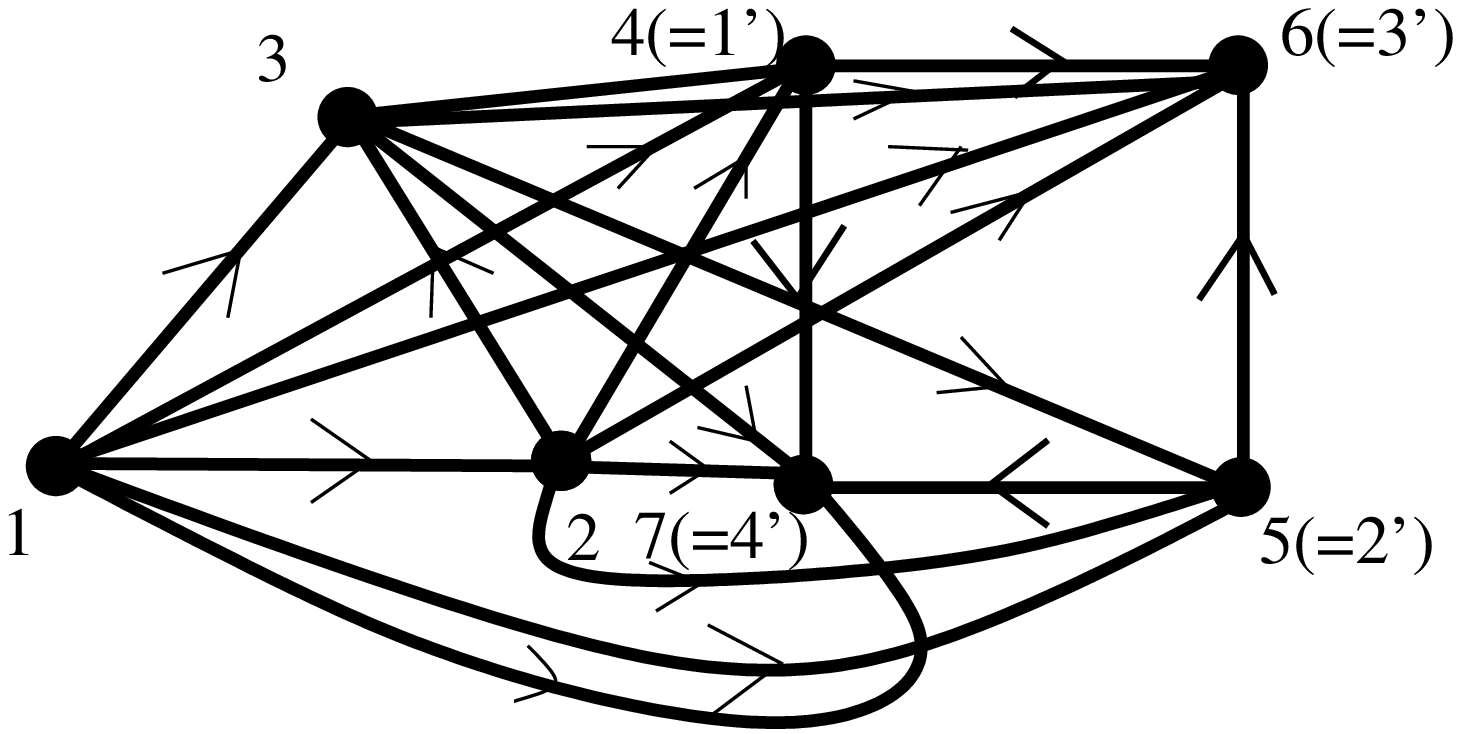}} \\
\vec{G} & \vec{H} & \vec{G} \vec{\nabla} \vec{H}
\end{array}
\]
\caption{Join of $\vec{G}$ and $\vec{H}$.} \label{joinfig}
\end{figure}

The reader should compare this notion of join with the notion of join in \cite{MR2571608} (Theorem 2.1.5, Corollary 2.1.6, and Proposition 2.1.7).
 
\begin{theorem} \label{jointhm} The characteristic polynomial of the skew adjacency matrix of $\vec{G} \vec{\nabla} \vec{H}$ is given by:
\[
\vec{P}_{\vec{G} \vec{\nabla} \vec{H}}(x)=\vec{P}_{\vec{G}}(x)\vec{P}_{\vec{H}}(x)+(\vec{P}_{K_1 \vec{\nabla} \vec{G}}(x)-x \vec{P}_{\vec{G}}(x))(\vec{P}_{K_1 \vec{\nabla} \vec{H}}(x)-x \vec{P}_{\vec{H}}(x)).
\]
\end{theorem}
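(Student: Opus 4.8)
The plan is to compute $\vec{P}_{\vec{G}\vec{\nabla}\vec{H}}(x)=\det\bigl(xI-\vec{A}_{\vec{G}\vec{\nabla}\vec{H}}\bigr)$ straight from the block form of the skew-adjacency matrix, exploiting the fact that the off-diagonal blocks are built from a rank-one matrix. Write $P=xI_{n_1}-\vec{A}_{\vec{G}}$ and $Q=xI_{n_2}-\vec{A}_{\vec{H}}$, let $\mathbf{1}_k$ be the all-ones column vector of length $k$, so that $J=\mathbf{1}_{n_1}\mathbf{1}_{n_2}^t$ and
\[
xI-\vec{A}_{\vec{G}\vec{\nabla}\vec{H}}=\left[\begin{array}{cc} P & -J \\ J^t & Q \end{array}\right].
\]
The scalar that will drive everything is $s_{\vec{G}}(x):=\mathbf{1}_{n_1}^t\,\text{adj}(P)\,\mathbf{1}_{n_1}$, the sum of all entries of the adjugate of $xI-\vec{A}_{\vec{G}}$, and similarly $s_{\vec{H}}(x)$.

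First I would record the identification $\vec{P}_{K_1\vec{\nabla}\vec{G}}(x)-x\,\vec{P}_{\vec{G}}(x)=s_{\vec{G}}(x)$. The skew-adjacency matrix of $K_1\vec{\nabla}\vec{G}$ is $\left[\begin{array}{cc} 0 & \mathbf{1}_{n_1}^t \\ -\mathbf{1}_{n_1} & \vec{A}_{\vec{G}}\end{array}\right]$, since the new edges all point out of the lowest-labelled vertex; so computing $\det\left[\begin{array}{cc} x & -\mathbf{1}_{n_1}^t \\ \mathbf{1}_{n_1} & P\end{array}\right]$ by the Schur complement relative to the block $P$ gives $\det(P)\bigl(x+\mathbf{1}_{n_1}^tP^{-1}\mathbf{1}_{n_1}\bigr)=x\,\vec{P}_{\vec{G}}(x)+\mathbf{1}_{n_1}^t\,\text{adj}(P)\,\mathbf{1}_{n_1}$, using $\det(P)P^{-1}=\text{adj}(P)$. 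Hence it is enough to prove $\vec{P}_{\vec{G}\vec{\nabla}\vec{H}}(x)=\vec{P}_{\vec{G}}(x)\,\vec{P}_{\vec{H}}(x)+s_{\vec{G}}(x)\,s_{\vec{H}}(x)$.

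For the main step I would apply the Schur complement to the displayed block matrix relative to the top-left block $P$, getting $\det(P)\det\bigl(Q+J^tP^{-1}J\bigr)$. Since $J=\mathbf{1}_{n_1}\mathbf{1}_{n_2}^t$, the matrix $J^tP^{-1}J=\bigl(\mathbf{1}_{n_1}^tP^{-1}\mathbf{1}_{n_1}\bigr)\mathbf{1}_{n_2}\mathbf{1}_{n_2}^t$ is a scalar multiple $\alpha\,\mathbf{1}_{n_2}\mathbf{1}_{n_2}^t$ of the all-ones matrix, so the matrix determinant lemma yields $\det\bigl(Q+\alpha\mathbf{1}_{n_2}\mathbf{1}_{n_2}^t\bigr)=\det(Q)\bigl(1+\alpha\,\mathbf{1}_{n_2}^tQ^{-1}\mathbf{1}_{n_2}\bigr)$. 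Multiplying through by $\det(P)$ and expanding, the leading term is $\det(P)\det(Q)=\vec{P}_{\vec{G}}(x)\,\vec{P}_{\vec{H}}(x)$ and the remaining term is $\bigl(\det(P)\,\mathbf{1}_{n_1}^tP^{-1}\mathbf{1}_{n_1}\bigr)\bigl(\det(Q)\,\mathbf{1}_{n_2}^tQ^{-1}\mathbf{1}_{n_2}\bigr)=s_{\vec{G}}(x)\,s_{\vec{H}}(x)$, which is exactly the identity sought; combining with the previous paragraph completes the proof.

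Two points need attention. The Schur-complement and matrix-determinant-lemma manipulations assume $P$, $Q$ (and $x$, in the first step) invertible; but every quantity in play is a polynomial in $x$, so it suffices to verify the identity on the Zariski-dense set where $\det P\cdot\det Q\neq 0$, after which it holds identically — equivalently, one runs the computation over $\mathbb{Q}(x)$. I expect the only genuine subtlety to be sign bookkeeping: the lower-left block of $xI-\vec{A}_{\vec{G}\vec{\nabla}\vec{H}}$ is $+J^t$ rather than the $-J^t$ of the classical symmetric join, and that single sign is exactly what turns $1-\alpha\beta$ into $1+\alpha\beta$ and hence produces the plus sign in front of $s_{\vec{G}}(x)\,s_{\vec{H}}(x)$ in the statement. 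Beyond that the argument is routine once the rank-one structure of $J$ is used.
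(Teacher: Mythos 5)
Your proof is correct, and it takes a genuinely different route from the paper's. The paper proves the identity by induction on the total number of vertices $N=n+m$: it first shows that the derivatives of both sides agree, using the expansion $\vec{P}_{\vec{G}}'(x)=\sum_{j}\vec{P}_{\vec{G}-j}(x)$ over vertex-deleted subgraphs together with the induction hypothesis, and then separately verifies equality of the constant terms at $x=0$ by a Leibniz-formula argument in which permutation terms crossing between the two blocks two or more times cancel in pairs, terms that never cross contribute $\det(\vec{A}_{\vec{G}})\det(\vec{A}_{\vec{H}})$, and terms crossing exactly once assemble into $(\vec{j}^t\,\text{adj}(\vec{A}_{\vec{G}})\vec{j})(\vec{j}^t\,\text{adj}(\vec{A}_{\vec{H}})\vec{j})$. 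Your Schur-complement argument over $\mathbb{Q}(x)$ replaces all of this with a single direct computation: the rank-one structure of $J$ and the matrix determinant lemma immediately produce the decomposition $\det(P)\det(Q)+s_{\vec{G}}(x)s_{\vec{H}}(x)$, and your preliminary identification $s_{\vec{G}}(x)=\mathbf{1}^t\,\text{adj}(xI-\vec{A}_{\vec{G}})\mathbf{1}=\vec{P}_{K_1\vec{\nabla}\vec{G}}(x)-x\vec{P}_{\vec{G}}(x)$ is essentially the generic-$x$ version of the quantity the paper only evaluates at $x=0$. Your handling of invertibility (working in the field $\mathbb{Q}(x)$, where $\det P$ and $\det Q$ are monic hence nonzero) is sound, and your sign bookkeeping for the block $+J^t$ is exactly right. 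What your approach buys is brevity, no induction, and a transparent explanation of why the correction term appears with a plus sign; what the paper's approach buys is that it stays entirely within elementary determinant expansions and exhibits the derivative identity $\vec{P}_{\vec{G}\vec{\nabla}\vec{H}}'=\sum_j \vec{P}_{(\vec{G}\vec{\nabla}\vec{H})-j}$, which the author reuses elsewhere (e.g.\ in Theorem \ref{kncharpoly}).
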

\begin{proof} Suppose that $|\vec{G}|=n\ge 1$ and $|\vec{H}|=m\ge 1$.  Let $N=n+m$.  The proof is by induction on $N$.  If $N=2$, then $n=m=1$.  In this case, the left and right hand sides of the equation are both $x^2+1$.

Now suppose that the result is true up to $N-1$, where $N-1 \ge 1$ and that $\vec{G}\vec{\nabla}\vec{H}$ has $N=n+m$ vertices.  We will first show that the derivative of the left-hand side is equal to the derivative of the right-hand side.  Then we will show that both sides have the same value at $0$. Note that the derivative may be expressed as:
\[
\vec{P}_{\vec{G}}'(x)=\sum_{j=1}^n \vec{P}_{\vec{G}-j}(x),
\] 
where $\vec{G}-j$ represents the linearly ordered graph $\vec{G}$ with the vertex $j$ deleted and all the remaining vertices are relabelled appropriately (subtract one from the label of all vertices with label greater than $l(j)$). The proof of this fact is identical to the proof of Theorem 2.3.1 in \cite{MR2571608} (also, see this book for further references).

It follows that the derivative of the left hand side is given by:
\[
\vec{P}_{\vec{G} \vec{\nabla} \vec{H}}'(x)=\sum_{j \in V(\vec{G} \vec{\nabla} \vec{H})} \vec{P}_{(\vec{G} \vec{\nabla} \vec{H})-j}(x)=\sum_{j \in V(\vec{G})} \vec{P}_{(\vec{G}-j) \vec{\nabla} \vec{H}}(x)+\sum_{j \in V(\vec{H})} \vec{P}_{\vec{G} \vec{\nabla} (\vec{H}-j)}(x).
\]
We can apply the induction hypothesis to each of these summands.  This gives:
\begin{eqnarray*}
\vec{P}_{(\vec{G}-j) \vec{\nabla} \vec{H}}(x) &=& \vec{P}_{\vec{G}-j}(x)\vec{P}_{\vec{H}}(x)+(\vec{P}_{K_1 \vec{\nabla} (\vec{G}-j)}(x)-x \vec{P}_{\vec{G}-j}(x))(\vec{P}_{K_1 \vec{\nabla} \vec{H}}(x)-x \vec{P}_{\vec{H}}(x)),\\
\vec{P}_{\vec{G} \vec{\nabla} (\vec{H}-j)}(x) &=& \vec{P}_{\vec{G}}(x)\vec{P}_{\vec{H}-j}(x)+(\vec{P}_{K_1 \vec{\nabla} \vec{G}}(x)-x \vec{P}_{\vec{G}}(x))(\vec{P}_{K_1 \vec{\nabla}(\vec{H}-j)}(x)-x \vec{P}_{\vec{H}-j}(x)). \\
\end{eqnarray*}
The derivative of the right hand side of the equation is as follows:
\begin{eqnarray*}
\vec{P}_{\vec{H}}(x)\sum_{j \in V(\vec{G})} \vec{P}_{\vec{G}-j}(x)+\vec{P}_{\vec{G}}(x)\sum_{j \in V(\vec{H})} \vec{P}_{\vec{H}-j}(x) &+&(\vec{P}_{K_1 \vec{\nabla} \vec{G}}(x)-x \vec{P}_{\vec{G}}(x))'(\vec{P}_{K_1 \vec{\nabla} \vec{H}}(x)-x \vec{P}_{\vec{H}}(x)) \\
&+& (\vec{P}_{K_1 \vec{\nabla} (\vec{G})}(x)-x \vec{P}_{\vec{G}}(x))(\vec{P}_{K_1 \vec{\nabla} \vec{H}}(x)-x \vec{P}_{\vec{H}}(x))'. \\
\end{eqnarray*}
Now we take the derivative of the terms in parentheses. For $\vec{C}=\vec{G}$ or $\vec{H}$, we obtain:
\begin{eqnarray*}
(\vec{P}_{K_1 \vec{\nabla} \vec{C}}(x)-x \vec{P}_{\vec{C}}(x))'&=& \vec{P}_{K_1 \vec{\nabla} \vec{C}}'(x)-x \cdot \vec{P}_{\vec{C}}'(x)-\vec{P}_{\vec{C}}(x)\\
&=& \sum_{j \in V(K_1\vec{\nabla}\vec{C})} \vec{P}_{(K_1 \vec{\nabla} \vec{C})-j}(x)-x \cdot \sum_{j \in V(\vec{C})} \vec{P}_{\vec{C}-j}(x)-\vec{P}_{\vec{C}}(x)\\
&=& \sum_{j \in V(\vec{C})} \vec{P}_{K_1 \vec{\nabla} (\vec{C}-j)}(x)-x \cdot \sum_{j \in V(\vec{C})} \vec{P}_{\vec{C}-j}(x).
\end{eqnarray*}
Comparing these expansions of the derivatives of the left and right hand sides reveals that their derivatives are identical. To finish the proof, it needs to be shown that:
\[
\vec{P}_{\vec{G} \vec{\nabla} \vec{H}}(0)=\vec{P}_{\vec{G}}(0)\vec{P}_{\vec{H}}(0)+\vec{P}_{K_1 \vec{\nabla} \vec{G}}(0)\vec{P}_{K_1 \vec{\nabla} \vec{H}}(0).
\]
Ultimately, we will use the Leibniz formula to justify this claim.  Let $A$ denote the skew-adjacency matrix of $\vec{G}$ and $B$ denote the skew-adjacency matrix of $\vec{H}$. Note that the determinant of the skew-adjacency matrix of $K_1 \vec{\nabla} \vec{G}$, where the vertex labelled $1$ corresponds to the vertex added to $\vec{G}$, is $\vec{j}^t_n \text{adj}(A) \vec{j}_n$.  For $K_1 \vec{\nabla} \vec{H}$, the determinant is $\vec{j}^t_m\text{adj}(B) \vec{j}_m$, where $\vec{j}_m$ is the $m \times 1$ matrix of all ones.

Let $M$ denote the skew-adjacency matrix of $\vec{G}\vec{\nabla}\vec{H}$. Recall the Leibniz formula for the determinant of a matrix:
\[
\det(M)=\left|\begin{array}{cc}
A & J \\
-J^t  & B
\end{array}
\right|= \sum_{\sigma \in \mathbb{S}_{n+m}} \text{sign}(\sigma) \prod_{i=1}^{n+m} m_{i \sigma(i)}.
\]
Let $\sigma \in \mathbb{S}_{n+m}$ and let $k$ be the number of $i$ between $1$ and $n$ such that $\sigma(i)>n$. Let $X=\{i_1,\ldots,i_k\}$ denote the set of such elements.  
\newline
\newline
\underline{Claim:} If $k \ge 2$,then there is a $\tau \in \mathbb{S}_{n+m}$ such that:
\[
\text{sign}(\sigma) \prod_{i=1}^{n+m} m_{i \sigma(i)}+\text{sign}(\tau) \prod_{i=1}^{n+m} m_{i \tau(i)}=0.
\] 
There must also be exactly $k$ elements $j$ from $n+1$ to $m$ such that $\sigma(j)<n+1$. Denote the set of these by $Y=\{j_1,\ldots,j_k\}$.  Note that for $i\in X$ and $j\in Y$, we have $m_{i\sigma(i)}=1$ and $m_{j \sigma(j)}=-1$. Thus, if we take any permutation of the elements of $\sigma(X)$ or a permutation of the elements of $\sigma(Y)$ and compose it with $\sigma$ to obtain a new permutation $\gamma \in \mathbb{S}_{n+m}$, then the term below will be unaffected:
\[
\prod_{i=1}^{n+m} m_{i \gamma(i)}.
\]
Since $k \ge 2$, the result creates as many even permutations as odd permutations. This completes the proof of the claim. $\hfill\boxdot$

Now, if $k=0$, then $\sigma=\tau \gamma$ where $\tau$, $\gamma$ are disjoint, $\tau \in \mathbb{S}_n$ and $\gamma$ is a permutation of the elements $\{n+1,\ldots,m\}$. It follows that:
\[
\sum_{\sigma \in \mathbb{S}_{n+m}, k=0} \text{sign}(\sigma) \prod_{i=1}^{m+n} m_{i\sigma(i)}=\det(A)\det(B).
\]

Now suppose that $k=1$.  In this case, $\prod_{i=1}^{m+n} m_{i\sigma(i)}$ consists of $n-1$ elements of $A$, $m-1$ elements of $B$, a $1$, and a $-1$. The $1$ specifies a row coordinate $i$, $1 \le i \le n$, and a column coordinate $j$, $n+1\le j \le m$.  The $-1$ specifies a row coordinate $i'$, $n+1 \le i' \le m$ and a column coordinate $j'$, $1 \le j' \le n$. Then these together specify the $(i,j')$ cofactor of $A$ and the $(i',j)$ cofactor of $B$.

We will set-up a one-to-one correspondence with permutations of the form $\sigma_1 \sigma_2$ such that $\sigma_1 \in \mathbb{S}(1,\ldots,n)$, $\sigma_2 \in \mathbb{S}(n+1,\ldots,m)$, and $\text{sign}(\sigma)\text{sign}(\sigma_1\sigma_2)=-1$. Indeed, using the notation as above, define $\sigma_1$ to be $\sigma|X$ and $\sigma_1(i)=j'$.  Define $\sigma_2$ to be $\sigma|Y$ and $\sigma(i')=j$. Now, take the disjoint cycle decompositions of $\sigma_1$ and $\sigma_2$.  Let $\tau_1$ be the cycle of $\sigma_1$ which contains $i$ and $\tau_2$ the cycle of $\sigma_2$ which contains $i'$. Note that $\tau_1 (i j) \tau_2$ is a permutation which sends $i \to j$ and $i' \to j'$.  It follows that $\sigma=\sigma_1 (i j) \sigma_2$ and that $\text{sign}(\sigma)\text{sign}(\sigma_1\sigma_2)=-1$.   

Hence, this defines a one-to-one correspondence between the summands of the elements of $\det(M)$ with $k=1$ and the product:
\[
(\vec{j}^t_n \text{adj}(A)\vec{j}_n)(\vec{j}^t_m \text{adj}(B)\vec{j}_m).
\]
Note: The extra $-1$ adjusts for the fact the permutation in $M$ contains an additional transposition $(ij)$.

It follows the desired formula evaluated at $0$ is always true. Thus our argument by differentiation shows that the formula holds for all $N$.  This completes the proof by mathematical induction.
\end{proof}
\subsection{Coalescence} Let $G$ and $H$ be disjoint undirected graphs having vertices $u$ and $v$ respectively. The coalescence of $G$ and $H$ at $u$ and $v$ is the graph obtained by identifying the vertices $u$ and $v$.  The coalescence is denoted $G \cdot H$. For linearly ordered graphs $\vec{G}$ and $\vec{H}$ with vertices $u$ and $v$ respectively, the coalescence $\vec{G} \cdot \vec{H}$ is defined to be $G\cdot H$ with all the vertices of $G$ labelled as in $\vec{G}$ and each vertex $w$ of $\vec{H}-v$ labelled as $l(w)+n$.  The vertex $u(=v)$ is labelled $u$. Since $u$ is less than every vertex in $\vec{H}-v$ to which it is adjacent, all of the edges from $u$ to $x \in V(\vec{H}-v)$ are directed $u \to x$. This is illustrated in Figure \ref{coalfig}.

\begin{figure}
\[
\begin{array}{ccc}
\scalebox{.5}{\psfig{figure=k3directed.eps}} & \scalebox{.5}{\psfig{figure=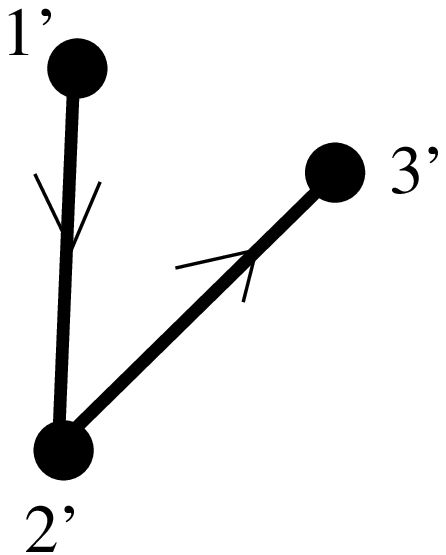}} & \scalebox{.5}{\psfig{figure=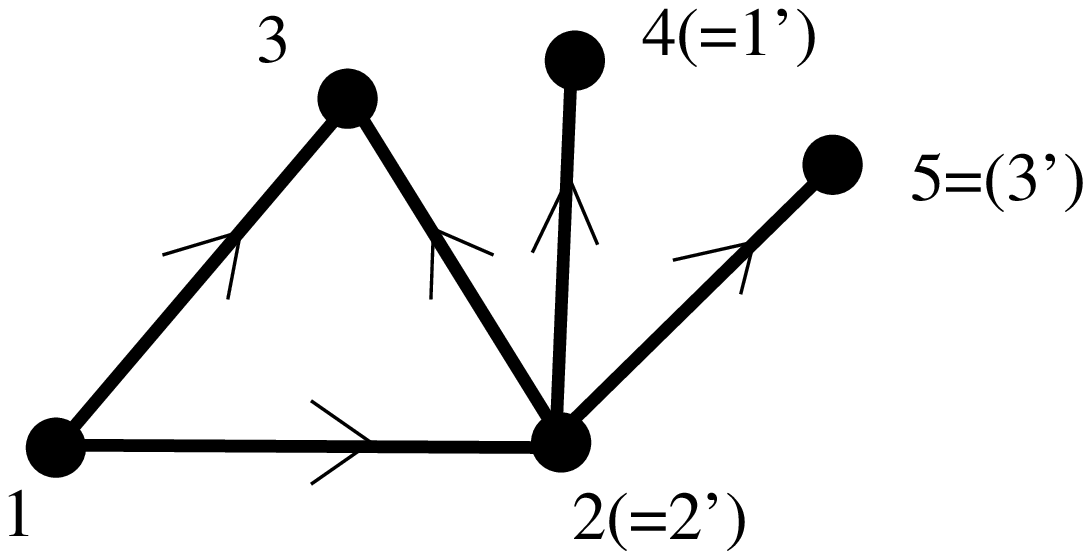}} \\
\vec{G} & \vec{H} & \vec{G} \cdot \vec{H}
\end{array}
\]
\caption{Coalescing over the vertex $2$ of $\vec{G}$ and $2'$ of $\vec{H}$.} \label{coalfig}
\end{figure}

Let $\vec{G}$ be any linearly ordered graph and $u$ any vertex of $\vec{G}$.  We define the \emph{promotion} of $u$ in $\vec{G}$ to be the linearly ordered graph obtained from $\vec{G}$ by deleting the vertex $u$, adding a vertex $v_0$ (with $l(v_0)=1$) having the same adjacent vertices as $u$, and relabelling any other vertex $w$ with $l(w)<l(u)$ as $l(w)+1$. We will denote the promotion of $u$ in $\vec{G}$ as $\vec{G}\leftrightarrow u$.

The next result shows how the skew-spectrum of the coalescence is related to the skew-spectra of simpler linearly ordered graphs. The proof is very similar to the proof of Theorem 2.2.3 in \cite{MR2571608}.
 
\begin{theorem} \label{coalthm} The characteristic polynomial of the coalescence of two linearly ordered graphs $\vec{G}$ and $\vec{H}$ at $u$ and $v$ is given by:
\[
\vec{P}_{\vec{G}\cdot\vec{H}}(x)=\vec{P}_{\vec{G}}(x)\vec{P}_{\vec{H}-v}(x)+\vec{P}_{\vec{G}-u}(x)\vec{P}_{\vec{H} \leftrightarrow v}(x)-x \vec{P}_{\vec{G}-u}(x)\vec{P}_{\vec{H}-v}(x).
\]
\end{theorem}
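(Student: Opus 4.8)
We sketch the plan; the author's actual proof follows the inductive template of Theorem~\ref{jointhm}, but here is a more direct route.

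The plan is to avoid induction and instead evaluate $\vec{P}_{\vec{G}\cdot\vec{H}}(x)=\det(xI-\vec{A}_{\vec{G}\cdot\vec{H}})$ by a block computation exploiting the fact that, in the coalescence, a \emph{single} vertex $u(=v)$ joins the two parts. Put $n=|\vec{G}|$, $m=|\vec{H}|$, and order the $n+m-1$ vertices of $\vec{G}\cdot\vec{H}$ as the vertices of $\vec{G}$ (in their $\vec{G}$-order, so $u$ occupies position $l(u)$) followed by the $m-1$ vertices of $\vec{H}-v$. Since $l(u)\le n$ while every vertex of $\vec{H}-v$ gets a label larger than $n$, the construction forces every edge out of $u$ into $\vec{H}-v$ to point away from $u$; hence, writing $\rho$ for the $\{0,1\}$-vector recording which vertices of $\vec{H}-v$ are $\vec{H}$-adjacent to $v$, and $e$ for the standard basis vector supported in position $l(u)$, one obtains the block forms
\[
\vec{A}_{\vec{G}\cdot\vec{H}}=\left[\begin{array}{cc}\vec{A}_{\vec{G}} & e\,\rho^t\\ -\rho\,e^t & \vec{A}_{\vec{H}-v}\end{array}\right],
\qquad
\vec{A}_{\vec{H}\leftrightarrow v}=\left[\begin{array}{cc} 0 & \rho^t\\ -\rho & \vec{A}_{\vec{H}-v}\end{array}\right].
\]
Here one uses that a skew-adjacency matrix depends only on the relative order of the labels, so the $\vec{G}$-block really is $\vec{A}_{\vec{G}}$, the $(\vec{H}-v)$-block really is $\vec{A}_{\vec{H}-v}$, and the promotion $\vec{H}\leftrightarrow v$ (new vertex $v_0$ in position $1$, all its edges outgoing) has the displayed form.

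The next step is a Schur-complement reduction, carried out over the field $\mathbb{Q}(x)$, where $xI-\vec{A}_{\vec{H}-v}$ is invertible because its determinant $\vec{P}_{\vec{H}-v}(x)$ is a nonzero polynomial. Eliminating the $(\vec{H}-v)$-block gives
\[
\vec{P}_{\vec{G}\cdot\vec{H}}(x)=\vec{P}_{\vec{H}-v}(x)\cdot\det\!\Big((xI-\vec{A}_{\vec{G}})+\phi(x)\,e\,e^t\Big),\qquad \phi(x):=\rho^t(xI-\vec{A}_{\vec{H}-v})^{-1}\rho .
\]
The rank-one update is then removed by the matrix-determinant lemma $\det(A+a b^t)=\det A+b^t\,\text{adj}(A)\,a$, which turns the right-hand determinant into $\vec{P}_{\vec{G}}(x)+\phi(x)\,\theta_{uu}(xI-\vec{A}_{\vec{G}})$; and $\theta_{uu}(xI-\vec{A}_{\vec{G}})$ is exactly the principal minor obtained by striking row and column $l(u)$, i.e.\ $\vec{P}_{\vec{G}-u}(x)$. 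Applying the \emph{same} Schur-complement computation to $xI-\vec{A}_{\vec{H}\leftrightarrow v}$ from the second display above yields $\vec{P}_{\vec{H}\leftrightarrow v}(x)=x\,\vec{P}_{\vec{H}-v}(x)+\phi(x)\,\vec{P}_{\vec{H}-v}(x)$, that is, $\phi(x)\,\vec{P}_{\vec{H}-v}(x)=\vec{P}_{\vec{H}\leftrightarrow v}(x)-x\,\vec{P}_{\vec{H}-v}(x)$. Substituting this into the previous line produces
\[
\vec{P}_{\vec{G}\cdot\vec{H}}(x)=\vec{P}_{\vec{G}}(x)\vec{P}_{\vec{H}-v}(x)+\vec{P}_{\vec{G}-u}(x)\vec{P}_{\vec{H}\leftrightarrow v}(x)-x\,\vec{P}_{\vec{G}-u}(x)\vec{P}_{\vec{H}-v}(x),
\]
which is the claim; both sides lie in $\mathbb{Q}[x]$, so the identity, first obtained in $\mathbb{Q}(x)$, holds as a polynomial identity.

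I do not anticipate a genuine obstacle on this route; the points requiring care are (i) verifying that the block decomposition really has $\vec{A}_{\vec{H}-v}$ (not a reordered variant) in the corner and all-$+1$'s in the $u$-row --- this is precisely where the hypothesis that $u$ is less than every vertex of $\vec{H}-v$ adjacent to it is used --- and (ii) legitimizing the passage to $\mathbb{Q}(x)$ and back. An alternative, closer to the proof of Theorem~\ref{jointhm} and to Theorem~2.2.3 of \cite{MR2571608}, is to induct on $n+m$: differentiate both sides using $\vec{P}_{\vec{C}}'(x)=\sum_{j\in V(\vec{C})}\vec{P}_{\vec{C}-j}(x)$, split the vertex sum for $\vec{G}\cdot\vec{H}$ into the $\vec{G}$-part, the $\vec{H}$-part, and the cut vertex $u$ (for which $(\vec{G}\cdot\vec{H})-u=(\vec{G}-u)\sqcup(\vec{H}-v)$), invoke the inductive hypothesis on each summand to see the two derivatives coincide, and then match the two sides at $x=0$, where the term $x\,\vec{P}_{\vec{G}-u}\vec{P}_{\vec{H}-v}$ drops out and a Leibniz-formula/sign-bijection argument of the kind used in Theorem~\ref{jointhm} finishes the proof. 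On that route the $x=0$ comparison is the hard step.
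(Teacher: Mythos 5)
Your proof is correct, but it is a genuinely different argument from the one in the paper, and your opening guess about the paper's method is off: the paper's proof of Theorem \ref{coalthm} is \emph{not} inductive (it is the join theorem, Theorem \ref{jointhm}, that is proved by induction on derivatives). The paper instead writes $xI-M$ with the coalesced vertex's row and column displayed and splits the $u$-column by multilinearity into the three pieces $(0,x,0,\gamma,\delta)^t+(-\alpha,x,\beta,0,0)^t-(0,x,0,0,0)^t$; each of the three resulting determinants becomes block-triangular after a permutation of rows and columns and is read off directly as $\vec{P}_{\vec{G}-u}\vec{P}_{\vec{H}\leftrightarrow v}$, $\vec{P}_{\vec{G}}\vec{P}_{\vec{H}-v}$, and $x\vec{P}_{\vec{G}-u}\vec{P}_{\vec{H}-v}$ respectively. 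Your route reaches the same three terms by a Schur complement over $\mathbb{Q}(x)$ followed by the matrix-determinant lemma, using that the coupling through the cut vertex is the rank-one block $e\rho^t$; your identification $\theta_{uu}(xI-\vec{A}_{\vec{G}})=\vec{P}_{\vec{G}-u}(x)$ and your second Schur computation $\phi(x)\vec{P}_{\vec{H}-v}(x)=\vec{P}_{\vec{H}\leftrightarrow v}(x)-x\vec{P}_{\vec{H}-v}(x)$ are both right, and the passage to $\mathbb{Q}(x)$ and back is harmless since $\vec{P}_{\vec{H}-v}$ is monic and both sides are polynomials. What each approach buys: the paper's column-splitting is entirely elementary, never leaves $\mathbb{Q}[x]$, and needs no invertibility hypothesis; yours makes the structural content more transparent --- the quantity $\vec{P}_{\vec{H}\leftrightarrow v}-x\vec{P}_{\vec{H}-v}$ is exposed as the scalar transfer function $\phi(x)\vec{P}_{\vec{H}-v}$ of the $\vec{H}$-side, which also explains why the analogous combination $\vec{P}_{K_1\vec{\nabla}\vec{C}}-x\vec{P}_{\vec{C}}$ appears in Theorem \ref{jointhm}. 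Your fallback inductive sketch at the end is also viable but, as you note, shifts all the work to the $x=0$ comparison.
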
  
\begin{proof} Suppose that the skew-adjacency matrices of $\vec{G}$ and $\vec{H}$ are given as below:
\[
\left[\begin{array}{ccc} A & \alpha & B \\ -\alpha^t & 0 & \beta^t \\ C & -\beta & D \end{array}\right],\,\,\,\left[\begin{array}{ccc} E & \gamma & F \\ -\gamma^t & 0 & \delta^t \\ C & -\delta & D \end{array}\right].
\]
Here the ``Greek columns'' in the matrices represent the skew-adjacencies of $u$ and $v$ respectively. Let $M$ represent the skew-adjacency matrix of the linearly ordered graph $\vec{G}\cdot\vec{H}$.  Then $xI-M$ is given by:
\[
\left[\begin{array}{ccccc} 
A' & -\alpha & B' & \vec{0} & \vec{0} \\ 
\alpha^t & x & -\beta^t & -\gamma^t & -\delta^t \\
C' & \beta & D' & \vec{0} & \vec{0} \\
\vec{0} & \gamma & \vec{0} & E' & F' \\
\vec{0} & \delta & \vec{0} & G' & H'
\end{array}\right],
\]
where $A',B',C',D',E',F',G',H',I'$ are the matrices obtained from $A,B,C,D,E,F,G,H,I$ upon subtracting $M$ from $xI$.  Since the determinant is multi-linear, it follows that $\det(xI-M)$ may be expressed as:
\[
\left|\begin{array}{ccccc} 
A' & \vec{0} & B' & \vec{0} & \vec{0} \\ 
\alpha^t & x & -\beta^t & -\gamma^t & -\delta^t \\
C' & \vec{0} & D' & \vec{0} & \vec{0} \\
\vec{0} & \gamma & \vec{0} & E' & F' \\
\vec{0} & \delta & \vec{0} & G' & H'
\end{array}\right|+
\left|\begin{array}{ccccc} 
A' & -\alpha & B' & \vec{0} & \vec{0} \\ 
\alpha^t & x & -\beta^t & -\gamma^t & -\delta^t \\
C' & \beta & D' & \vec{0} & \vec{0} \\
\vec{0} & \vec{0} & \vec{0} & E' & F' \\
\vec{0} & \vec{0} & \vec{0} & G' & H'
\end{array}\right|-
\left|\begin{array}{ccccc} 
A' & \vec{0} & B' & \vec{0} & \vec{0} \\ 
\alpha^t & x & -\beta^t & -\gamma^t & -\delta^t \\
C' & \vec{0} & D' & \vec{0} & \vec{0} \\
\vec{0} & \vec{0} & \vec{0} & E' & F' \\
\vec{0} & \vec{0} & \vec{0} & G' & H'
\end{array}\right|.
\]  
The second determinant is $\vec{P}_{\vec{G}}(x)\vec{P}_{\vec{H}-v}(x)$.  The third determinant is $x \vec{P}_{\vec{G}-u}(x)\vec{P}_{\vec{H}-v}(x)$.  The first determinant may be rearranged as follows to obtain $\vec{P}_{\vec{G}-u}(x)\vec{P}_{\vec{H} \leftrightarrow v}(x)$:
\[
\left|\begin{array}{ccccc} 
A' & \vec{0} & B' & \vec{0} & \vec{0} \\ 
\alpha^t & x & -\beta^t & -\gamma^t & -\delta^t \\
C' & \vec{0} & D' & \vec{0} & \vec{0} \\
\vec{0} & \gamma & \vec{0} & E' & F' \\
\vec{0} & \delta & \vec{0} & G' & H'
\end{array}\right|=
\left|\begin{array}{ccccc} 
A' & B' & \vec{0} & \vec{0} & \vec{0} \\ 
C' & D' & \vec{0} & \vec{0} & \vec{0} \\
\alpha^t & -\beta^t & x & -\gamma^t & -\delta^t \\
\vec{0} & \vec{0} & \gamma & E' & F' \\
\vec{0} & \vec{0} & \delta & G' & H'
\end{array}\right|.
\]
\end{proof}
\subsection{Building Block: Linear Ordered Paths.} Let $D_n$ denote the based chord diagram whose labelled chord endpoints from the basepoint are given by the code:
\[
12132435465\cdots(n-1)(n-2)n(n-1)n.
\] 
Then the intersection graph of $D_n$ is the ordered path $\vec{P}_n$.  The skew-adjacency matrix of $D_n$ is given by:
\[
\vec{A}_n=\left[ \begin{array}{cccccc} 0 & 1 & 0 & \cdots & 0 & 0 \\ -1 & 0 & 1 & \cdots & 0 & 0 \\ 
0 & -1 & 0 & \cdots & 0 & 0 \\
\vdots & \vdots & \vdots & \ddots & \vdots & \vdots \\ 
0 & 0 & 0 & \cdots & 0 & 1 \\
0 & 0 & 0 & \cdots & -1 & 0 \end{array}\right].
\]
\begin{theorem} Let $n \in \mathbb{N}$.  Then the ordered path $\vec{P}_n$ satisfies:
\begin{enumerate}
\item The recurrence relation $\vec{P}_{\vec{P}_n}(x)=x \cdot \vec{P}_{\vec{P}_{n-1}}(x)+\vec{P}_{\vec{P}_{n-2}}(x)$ with initial conditions $\vec{P}_{\vec{P}_1}(x)=x$ and $\vec{P}_{\vec{P}_2}(x)=x^2+1$.
\item The solution to this recurrence relation is:
\[
\vec{P}_{\vec{P}_n}(x)= \frac{1}{2^{n+1}}\left[\left(1+\frac{x}{\sqrt{x^2+4}}\right)\left(x+\sqrt{x^2+4}\right)^n+\left(1-\frac{x}{\sqrt{x^2+4}}\right)\left(x-\sqrt{x^2+4} \right)^n\right].
\]
\item The solution may be expressed as a sum:
\[
\vec{P}_{\vec{P}_n}(x)=\frac{1}{2^n}\sum_{j=0}^{\lfloor n/2 \rfloor} \left(\begin{array}{c} n+1 \\ 2j+1 \end{array}\right) x^{n-2j} (x^2+4)^j.
\]
\end{enumerate}
\end{theorem}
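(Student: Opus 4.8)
The plan is to establish the three parts in sequence, using each to bootstrap the next. For part (1), I would compute $\vec{P}_{\vec{P}_n}(x) = \det(xI - \vec{A}_n)$ by cofactor expansion along the last row (equivalently the last column) of the tridiagonal matrix $xI - \vec{A}_n$, whose diagonal entries are all $x$, whose superdiagonal entries are all $-1$, and whose subdiagonal entries are all $+1$. The standard tridiagonal determinant recurrence $D_n = a_n D_{n-1} - b_{n-1}c_{n-1} D_{n-2}$ then gives $\vec{P}_{\vec{P}_n}(x) = x\,\vec{P}_{\vec{P}_{n-1}}(x) - (-1)(1)\,\vec{P}_{\vec{P}_{n-2}}(x) = x\,\vec{P}_{\vec{P}_{n-1}}(x) + \vec{P}_{\vec{P}_{n-2}}(x)$. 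The base cases are immediate: $\vec{P}_{\vec{P}_1}(x) = x$ and $\vec{P}_{\vec{P}_2}(x) = \det\left(\begin{smallmatrix} x & -1 \\ 1 & x\end{smallmatrix}\right) = x^2+1$. I would also record that setting $\vec{P}_{\vec{P}_0}(x) = 1$ is consistent with the recurrence, since this is convenient for the next step.

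For part (2), I would solve this linear recurrence via its characteristic equation $t^2 - xt - 1 = 0$, with roots $t_\pm = \tfrac12\bigl(x \pm \sqrt{x^2+4}\bigr)$, so that $\vec{P}_{\vec{P}_n}(x) = C_+ t_+^n + C_- t_-^n$. Imposing $\vec{P}_{\vec{P}_0}(x) = 1$ and $\vec{P}_{\vec{P}_1}(x) = x$ and using $t_+ + t_- = x$ and $t_+ - t_- = \sqrt{x^2+4}$, one solves the resulting $2\times 2$ linear system to get $C_\pm = \tfrac12\bigl(1 \pm \tfrac{x}{\sqrt{x^2+4}}\bigr)$. Substituting $t_\pm^n = 2^{-n}(x \pm \sqrt{x^2+4})^n$ and factoring out $2^{-(n+1)}$ yields exactly the claimed closed form. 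Alternatively, and perhaps cleaner to write up, one can simply verify that the stated formula satisfies the recurrence of part (1) and matches the two initial conditions, then invoke uniqueness.

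For part (3), I would expand the closed form from part (2). Writing $s = \sqrt{x^2+4}$, the formula reads $2^{n+1}\vec{P}_{\vec{P}_n}(x) = \bigl[(x+s)^n + (x-s)^n\bigr] + \tfrac{x}{s}\bigl[(x+s)^n - (x-s)^n\bigr]$. The binomial theorem gives $(x+s)^n + (x-s)^n = 2\sum_j \binom{n}{2j} x^{n-2j} s^{2j}$ and $(x+s)^n - (x-s)^n = 2\sum_j \binom{n}{2j+1} x^{n-2j-1} s^{2j+1}$; since $s^2 = x^2+4$, the first sum becomes $2\sum_j \binom{n}{2j} x^{n-2j}(x^2+4)^j$ and the $\tfrac{x}{s}$-term becomes $2\sum_j \binom{n}{2j+1} x^{n-2j}(x^2+4)^j$. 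Adding, dividing by $2^{n+1}$, and applying Pascal's identity $\binom{n}{2j} + \binom{n}{2j+1} = \binom{n+1}{2j+1}$ produces $\vec{P}_{\vec{P}_n}(x) = 2^{-n}\sum_{j=0}^{\lfloor n/2\rfloor} \binom{n+1}{2j+1} x^{n-2j}(x^2+4)^j$, the upper limit being forced by $2j \le n$.

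I do not expect a serious obstacle; the arguments are routine. The one point needing a little care is the sign bookkeeping in part (1): one must track that the superdiagonal of $xI - \vec{A}_n$ contributes $-1$ while the subdiagonal contributes $+1$, so that $b_{n-1}c_{n-1} = -1$ and the tridiagonal recurrence acquires a $+$ rather than the usual $-$. In part (3) the only subtlety is the parity split of the binomial sum and confirming that the surd $\sqrt{x^2+4}$ cancels — which it does automatically, since it enters only through the symmetric and antisymmetric combinations, leaving a genuine polynomial.
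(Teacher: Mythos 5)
Your proposal is correct and follows essentially the same route as the paper: the recurrence comes from expanding the tridiagonal determinant $\det(xI-\vec{A}_n)$ (the paper expands along the first column where you use the last row, with the same sign bookkeeping), and parts (2) and (3) are obtained exactly as the paper indicates, by solving the recurrence and then applying the Binomial Theorem together with Pascal's identity. You supply the routine details the paper omits, including the convenient convention $\vec{P}_{\vec{P}_0}(x)=1$, and all of them check out.
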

\begin{proof}For the first, we simply compute $\det(xI-\vec{A}_n)$.
\begin{eqnarray*}
\left|\begin{array}{ccccccc} x & -1 & 0 & 0 & \cdots & 0 & 0 \\ 1 & x & -1 & 0 & \cdots & 0 & 0\\ 0 & 1 & x & -1 & \cdots & 0 & 0 \\ \vdots & \ddots & \ddots & \ddots & \cdots & \vdots & \vdots \\0 & 0 & 0 & 0 & \cdots & x & -1 \\ 0 & 0 & 0 & 0 & \cdots & 1 & x \\ \end{array}\right| &=& x \cdot \vec{P}_{P_{n-1}}(x)+\left|\begin{array}{ccccccc} 1 & -1 & 0 & 0 & \cdots & 0 & 0 \\ 0 & x & -1 & 0 & \cdots & 0 & 0\\ 0 & 1 & x & -1 & \cdots & 0 & 0 \\ \vdots & \ddots & \ddots & \ddots & \cdots & \vdots & \vdots \\0 & 0 & 0 & 0 & \cdots & x & -1 \\ 0 & 0 & 0 & 0 & \cdots & 1 & x \\ \end{array}\right|\\
&=& x \cdot \vec{P}_{\vec{P}_{n-1}}(x)+\vec{P}_{\vec{P}_{n-2}}(x)+\left|\begin{array}{ccccccc} 0 & -1 & 0 & 0 & \cdots & 0 & 0 \\ 0 & x & -1 & 0 & \cdots & 0 & 0\\ 0 & 1 & x & -1 & \cdots & 0 & 0 \\ \vdots & \ddots & \ddots & \ddots & \cdots & \vdots & \vdots \\0 & 0 & 0 & 0 & \cdots & x & -1 \\ 0 & 0 & 0 & 0 & \cdots & 1 & x \\ \end{array}\right| \\
&=& x \cdot \vec{P}_{\vec{P}_{n-1}}(x)+\vec{P}_{\vec{P}_{n-2}}(x).\\
\end{eqnarray*}
The second claim follows from solving the recurrence relation. The third claim follows from an application of the Binomial Theorem and Pascal's Triangle formula.
\end{proof}
\subsection{Building Block: Linearly Ordered Complete Graphs} Let $D_n$ denote the based chord diagram whose labelled chord endpoints from the basepoint are given by the code:
\[
1 2 3 \cdots n 1 2 3 \cdots n.
\]
The intersection graph of $D_n$ is the complete graph on $n$ vertices. We will denote this graph with its canonical ordering as $\vec{K}_n$. Let $\vec{A}_n$ denote the skew-adjacency matrix of $\vec{K}_n$. Then:
\[
\vec{A}_n=\left[ \begin{array}{ccccc} 0 & 1 & 1 & \cdots & 1 \\ -1 & 0 & 1 & \cdots & 1 \\ 
\vdots & \vdots & \ddots & \vdots & \vdots \\ -1 & -1 & -1 & \cdots & 0 \end{array}\right].
\]

\begin{lemma}\label{detknlemm} The following hold for the skew-adjacency matrix $\vec{A}_n$ of $\vec{K}_n$.
\begin{enumerate}
\item For $n \in 2 \mathbb{N}$, $\vec{A}_n$ is invertible and $\det(\vec{A}_n)=1$.
\item For $n \notin 2 \mathbb{N}$, $\det(\vec{A}_n)=0$.
\end{enumerate}
\end{lemma}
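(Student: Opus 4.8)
The plan is to handle the two parities separately, the odd case being essentially free and the even case requiring one short computation.

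For odd $n$, statement (2) is immediate from skew-symmetry: since $\vec{A}_n^t=-\vec{A}_n$, we have $\det(\vec{A}_n)=\det(\vec{A}_n^t)=\det(-\vec{A}_n)=(-1)^n\det(\vec{A}_n)=-\det(\vec{A}_n)$, so $\det(\vec{A}_n)=0$ (the determinant is an integer and $2\det(\vec{A}_n)=0$). For the even case, set $\delta_n=\det(\vec{A}_n)$ and first simplify $\vec{A}_n$ by the determinant-preserving row operations $R_i\mapsto R_i-R_{i+1}$ for $i=1,\dots,n-1$, carried out in increasing order of $i$. Since row $i$ of $\vec{A}_n$ equals $(-1,\dots,-1,0,1,\dots,1)$ with the $0$ in position $i$, the difference $R_i-R_{i+1}$ has entries $1$ in positions $i$ and $i+1$ and $0$ elsewhere, while the last row is untouched. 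Thus $\vec{A}_n$ is transformed into a matrix $M_n$ whose first $n-1$ rows form an upper bidiagonal block with $1$'s on the diagonal and on the superdiagonal, and whose final row is $(-1,-1,\dots,-1,0)$.

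Next I would expand $\det(M_n)=\delta_n$ along the first column, whose only nonzero entries are the $(1,1)$-entry, equal to $1$, and the $(n,1)$-entry, equal to $-1$. Deleting the first row and first column of $M_n$ leaves a matrix of exactly the same shape as $M_{n-1}$, contributing $\delta_{n-1}$; deleting the last row and first column leaves a lower bidiagonal matrix with $1$'s on the diagonal and subdiagonal, whose determinant is $1$. Taking into account the cofactor sign $(-1)^{n+1}$ attached to the $(n,1)$-position, we obtain the recurrence
\[
\delta_n=\delta_{n-1}+(-1)^n\qquad(n\ge 2),
\]
with $\delta_1=\det[0]=0$. Solving it gives $\delta_n=\tfrac12\bigl(1+(-1)^n\bigr)$, that is $\delta_n=1$ for even $n$ and $\delta_n=0$ for odd $n$; invertibility for even $n$ follows at once since the determinant is $1$. (This recurrence in fact reproves the odd case as well, so the skew-symmetry remark above is just a shortcut.)

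The only delicate point is the bookkeeping in the cofactor expansion: correctly recognizing the two minors and, above all, the sign $(-1)^{n+1}$. An alternative route that avoids this is the Pfaffian, using $\det(\vec{A}_n)=\mathrm{Pf}(\vec{A}_n)^2$ for even $n$: expanding the Pfaffian along its first row gives $\mathrm{Pf}_n=\sum_{j=2}^{n}(-1)^j\,\mathrm{Pf}_{n-2}$, since deleting rows and columns $1$ and $j$ returns a matrix of the same form of size $n-2$; as $\sum_{j=2}^n(-1)^j=1$ for even $n$ and $\mathrm{Pf}_2=1$, induction yields $\mathrm{Pf}_n=1$ and hence $\det(\vec{A}_n)=1$.
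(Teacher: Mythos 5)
Your argument is correct, and it takes a genuinely different route from the paper's. For the odd case the paper exhibits the explicit kernel vector $[1\ {-1}\ 1\ \cdots\ {-1}\ 1]^t$, whereas you use the standard parity identity $\det(\vec{A}_n)=(-1)^n\det(\vec{A}_n)$; both are fine, though the paper's version has the side benefit of naming an actual null vector. The real divergence is in the even case: the paper writes down the inverse of $\vec{A}_n$ explicitly over $\mathbb{Z}$ (an alternating $0,\pm1$ matrix), deduces $\det(\vec{A}_n)=\pm1$, and then pins down the sign by an inductive computation of the $(2,1)$ entry of the adjugate, compared against the corresponding entry of the stated inverse. Your row reduction $R_i\mapsto R_i-R_{i+1}$ collapses the matrix to an upper bidiagonal block plus the untouched last row, and the first-column cofactor expansion then yields the clean recurrence $\delta_n=\delta_{n-1}+(-1)^n$ with $\delta_1=0$; I checked the two minors and the sign $(-1)^{n+1}$, and they are as you claim (e.g.\ $\delta_2=1$, $\delta_3=0$, $\delta_4=1$). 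This is shorter and more self-contained than the paper's proof: you never have to guess the inverse, you avoid the somewhat delicate adjugate-entry induction, and your single recurrence handles both parities at once. What the paper's approach buys in exchange is the explicit formula for $\vec{A}_n^{-1}$, which is a stronger statement than invertibility, though it is not used elsewhere in the paper. Your Pfaffian remark is also correct ($\mathrm{Pf}_n=\sum_{j=2}^n(-1)^j\,\mathrm{Pf}_{n-2}=\mathrm{Pf}_{n-2}$ for even $n$) and is arguably the most conceptual of the three routes, since it explains why the determinant of this skew-symmetric matrix is a perfect square, namely $1$.
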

\begin{proof} Fist suppose that $n \notin 2 \mathbb{N}$. It is sufficient to demonstrate that there is a nonzero vector in the $\lambda=0$ eigenspace.  Indeed, for $n$ odd, we have:
\[
\vec{v}_n=\left[1 \,\, -1 \,\, 1 \cdots -1 \,\, 1 \right]^t.
\]
By direct computation, we see that $\vec{A}_n \cdot \vec{v}_n=0$ for all odd $n$.  Hence, $\det(\vec{A}_n)=0$ for $n$ odd.

Now suppose that $n \in 2 \mathbb{N}$.  It can be easily checked that the inverse is given by:
\[
\vec{A}_n^{-1}=\left[ \begin{array}{cccccc} 0 & -1 & 1 & -1 & \cdots & -1 \\ 1 & 0 & -1 & 1 &  \cdots & 1 \\ 
\vdots & \vdots & \ddots & \vdots & \vdots & \vdots \\ 1 & -1 & 1 & -1 & \cdots & 0 \end{array}\right].
\]
Since $\vec{A}_n^{-1}$ is a matrix over $\mathbb{Z}$, it follows that $\det(\vec{A}_n)=\pm 1$ (as these are the only units in $\mathbb{Z}$). We will compute the determinant of $\vec{A}_n$ using the identity:
\[
\vec{A}_n^{-1}=\frac{1}{\det(\vec{A}_n)} \text{adj}(\vec{A}_n)=\pm \text{adj}(\vec{A}_n),
\]
where $\text{adj}(B)$ denotes the adjugate of $B$. The proof is by induction on $n$.  For $n=2$, we have $\vec{A}_2=\left[\begin{array}{cc}0 & 1 \\ -1 & 0 \end{array}\right]$ and hence $\det(\vec{A}_2)=1$.  Suppose that the proposition is true for all even numbers less than $n$. For a matrix $B$, let $B^{\hat{j}}$ denote the matrix obtained by deleting the $j$-th column of $B$. For a square matrix $B$, Let $B_{\hat{i}}^{\hat{j}}$ denote the square matrix obtained from $B$ by deleting the $i$-th row and the $j$-th column.  We expand the determinant given by the $(1,2)$-cofactor of $\vec{A}_n$:
\begin{eqnarray*}
\left|
\begin{array}{c|cccc}
-1 & 1 & 1 & \cdots & 1 \\ \hline
 -1 & & & &  \\
 \vdots & &   & \vec{A}_{n-2} & \\ \\
 -1 & & & &  \\ 
\end{array}
\right| &=& -\det(\vec{A}_{n-2})+\sum_{i=1}^{n-2} (-1)^i \left|
\begin{array}{c|ccc}
-1 &  &  &  \\
\vdots & & (\vec{A}_{n-2})^{\hat{i}} &  \\
-1 & & &   \\
\end{array}
\right|  \\
&=& -1+\sum_{i=1}^{n-2} (-1)^{i+1} \left|
\begin{array}{c|ccc}
1 &  &  &  \\
\vdots & & (\vec{A}_{n-2})^{\hat{i}} &  \\
1 & & &   \\
\end{array}
\right|\\
&=& -1+\sum_{i=1}^{n-2}(-1)^{i+1} \sum_{j=1}^{n-2} (-1)^{j-1}\left| (\vec{A}_{n-2})_{\hat{j}}^{\hat{i}} \right| \\
&=& -1+\sum_{i=1}^{n-2} \sum_{j=1}^{n-2} (-1)^{i+j} \left| (\vec{A}_{n-2})_{\hat{j}}^{\hat{i}} \right| \\
&=& -1+\sum_{i=1}^{n-2} \sum_{j=1}^{n-2} (\text{adj}(\vec{A}_{n-2}))_{ij} \\
&=& -1+\sum_{j=1}^{n-2} (-1)^j \\
&=& -1.
\end{eqnarray*}
In the third line from the bottom, we are using the induction hypothesis that $\vec{A}_{n-2}^{-1}=\text{adj}(\vec{A}_{n-2})$.  Thus, $1$ is the $(2,1)$ entry of $\text{adj}(\vec{A}_n)$.  Since $\vec{A}_n^{-1}=\pm \text{adj}(\vec{A}_n)$, and the $(2,1)$ entry of $\vec{A}_n^{-1}$ is also $1$, we must have that $\det(\vec{A}_n)=1$. This completes the proof by mathematical induction.
\end{proof}

\begin{theorem} \label{kncharpoly} The following holds for the linearly ordered complete graphs $\vec{K}_n$.
\[
\vec{P}_{\vec{K}_n}(x)=\frac{1}{2}\left((-1+x)^n+(1+x)^n\right).
\]
\end{theorem}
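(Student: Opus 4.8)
The plan is to compute $\vec{P}_{\vec{K}_n}(x) = \det(xI - \vec{A}_n)$ directly by reducing it to a first-order recurrence in $n$ and solving that recurrence. Write $M_n = xI - \vec{A}_n$ for the $n \times n$ matrix with $x$ on the diagonal, $-1$ strictly above the diagonal, and $+1$ strictly below. The case $n = 1$ is immediate ($\vec{P}_{\vec{K}_1}(x) = x$), so assume $n \ge 2$.

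First I would apply the row operations $R_i \mapsto R_i - R_{i+1}$ for $i = 1, \ldots, n-1$, performed in increasing order of $i$ so that each one subtracts an as-yet-untouched (original) row. After these operations, row $i$ (for $1 \le i \le n-1$) has exactly two nonzero entries — $x-1$ in column $i$ and $-(x+1)$ in column $i+1$ — while the last row is unchanged at $(1, 1, \ldots, 1, x)$. Call this matrix $R_n$; it has the same determinant as $M_n$, and one checks directly that deleting its first row and first column recovers exactly $R_{n-1}$.

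Next I would expand $\det R_n$ along the first column, which now has only two nonzero entries: $x-1$ in row $1$ and $1$ in row $n$. The $(1,1)$-minor is $R_{n-1}$, with determinant $\vec{P}_{\vec{K}_{n-1}}(x)$; the $(n,1)$-minor is lower triangular with all diagonal entries equal to $-(x+1)$, hence has determinant $(-(x+1))^{n-1}$. Since the cofactor sign at position $(n,1)$ is $(-1)^{n+1}$, which cancels the $(-1)^{n-1}$, we obtain
\[
\vec{P}_{\vec{K}_n}(x) = (x-1)\,\vec{P}_{\vec{K}_{n-1}}(x) + (x+1)^{n-1}, \qquad \vec{P}_{\vec{K}_1}(x) = x.
\]

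Finally I would confirm the claimed formula by induction against this recurrence: the case $n=1$ gives $\tfrac12\left((x-1)+(x+1)\right) = x$, and assuming the formula for $n-1$,
\[
(x-1)\cdot\tfrac12\left((x-1)^{n-1} + (x+1)^{n-1}\right) + (x+1)^{n-1} = \tfrac12(x-1)^n + \tfrac{x+1}{2}(x+1)^{n-1} = \tfrac12\left((x-1)^n + (x+1)^n\right),
\]
as desired. (Alternatively, unrolling the recurrence gives $\vec{P}_{\vec{K}_n}(x) = x(x-1)^{n-1} + \sum_{j=0}^{n-2}(x-1)^j(x+1)^{n-1-j}$, which simplifies via the finite geometric-sum identity $\sum_{j=0}^{n-1} a^j b^{n-1-j} = \frac{b^n - a^n}{b-a}$ applied with $a = x-1$, $b = x+1$.) No step here is a genuine obstacle; the only place demanding care is the bookkeeping — applying the row operations in an order that uses only original rows, correctly identifying the two minors, and keeping the cofactor signs straight.
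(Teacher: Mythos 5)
Your proof is correct, and it takes a genuinely different route from the paper's. The paper deduces the formula "from the top down": it invokes the derivative identity $\vec{P}_{\vec{G}}'(x)=\sum_{j}\vec{P}_{\vec{G}-j}(x)$, uses the fact that $\vec{K}_n-j\cong\vec{K}_{n-1}$ for every $j$ to get $\vec{P}_{\vec{K}_n}'(x)=n\,\vec{P}_{\vec{K}_{n-1}}(x)$, pins down all the Taylor coefficients at $0$ using the determinant values $\det(\vec{A}_t)$ established separately in Lemma \ref{detknlemm}, and finishes with Taylor's Theorem and the Binomial Theorem. You instead work directly with $\det(xI-\vec{A}_n)$: your row operations, cofactor expansion, and the resulting first-order recurrence $\vec{P}_{\vec{K}_n}(x)=(x-1)\vec{P}_{\vec{K}_{n-1}}(x)+(x+1)^{n-1}$ all check out (I verified the structure of the reduced matrix, the identification of the two minors, and the sign bookkeeping at position $(n,1)$), and the induction closing the argument is clean. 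What your approach buys is self-containedness and economy: it needs no spectral machinery and, as a bonus, re-derives Lemma \ref{detknlemm} by setting $x=0$ in the closed form. What the paper's approach buys is consistency of method: the derivative-summation technique is the same tool used in the proof of Theorem \ref{jointhm}, so the paper's version doubles as a second illustration of that technique, at the cost of depending on the earlier determinant lemma.
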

\begin{proof} We will again use the fact that $\vec{P}_{\vec{G}}'(x)=\sum_{j=1}^n \vec{P}_{\vec{G}-j}(x)$.  Let $\vec{G}=\vec{K}_n$.  We note that for all $j=1,\ldots,n$, $\vec{G}-j\cong \vec{K}_{n-1}$. Hence, $\vec{P}_G'(x)=n \cdot \vec{P}_{\vec{K}_{n-1}}(x)$.  Suppose firstly that $n$ is even.  It follows that $\vec{P}_{\vec{G}}^{(n-2)}(x)=\frac{n!}{2}(x^2+1)$. Recall that $\vec{P}_{\vec{K}_{t}}(0)=(-1)^t\det(\vec{A}_t)$ for all $t$. By Theorem \ref{detknlemm}, we have that $\vec{P}_{\vec{K}_{t}}(0)=1$ for $t$ even and $0$ for $t$ odd. From these observations, we conclude that:
\[
\vec{P}_{\vec{K}_n}^{(k)}(0)=\left\{\begin{array}{cc} \frac{n!}{(n-k)!} & k \le n-4, k \text{ even} \\ 0 & k \le n-3, k \text{ odd} \end{array} \right. .
\]
By Taylor's Theorem, we have that:
\begin{eqnarray*}
\vec{P}_{\vec{K}_n}(x) &=& \sum_{i=0}^n \frac{\vec{P}^{(i)}_{\vec{K}_n}(0)}{i!} x^i \\
           &=& \frac{n!}{2} \left(\frac{x^n}{n!/2}+\frac{x^{n-2}}{(n-2)!}\right)+\sum_{j=0}^{\frac{n-4}{2}} \frac{n!}{(n-2j)!(2j)!} x^{2j} \\
           &=& \sum_{j=0}^{n/2} \left(\begin{array}{c}n \\ 2j \end{array} \right) x^{2j} \\
           &=& \frac{1}{2}\left((-1+x)^n+\left(1+x\right)^n\right).
\end{eqnarray*}
The last equality follows from the Binomial Theorem. Now suppose that $n$ is odd. In this case we have that $\vec{P}_{\vec{G}}^{(n-1)}(x)=n! x$.  Using an argument similar to that of the $n \in 2 \mathbb{N}$ case, it follows that:
\[
\vec{P}_{\vec{K}_n}^{(k)}(0)=\left\{\begin{array}{cc} \frac{n!}{(n-k)!} & k \text{ odd} \\ 0 & k \text{ even} \end{array} \right. .
\]
Taylor's Theorem and the Binomial Theorem imply that:
\[
\vec{P}_{\vec{G}}(x)=\sum_{j=1}^{(n+1)/2} \left(\begin{array}{c}n \\ 2j-1 \end{array} \right) x^{2j-1}=\frac{1}{2}\left(-(1-x)^n+(1+x)^n\right).
\]
\end{proof}

\section{Application To Pretzel Knots} \label{pretzels}
Let $\mathscr{F}$ be a set (not necessarily finite) of virtual knot diagrams (in particular, we do not consider them up to Reidemeister equivalence). Let $ m \ge 1$ be a natural number.  Let $j$ be a natural number, $0 \le j \le m$. 
\newline
\newline
\centerline{
\fbox{\parbox{6in}{\underline{\textbf{Question} $(j,m)$:}  Given a diagram in $K \in \mathscr{F}$, how many ways are there to give the oriented smoothing on $m-j$ crossings and the unoriented smoothing at $j$ crossings so that the result has exactly one connected component?}}
}
\newline
\newline
We will take the convention that if $m$ is greater than the number of crossings of $K \in \mathscr{F}$, then the answer to Question $(j,m)$ is $0$.

Using our notation, an equivalent formulation of Question $(j,m)$ would be: Given a diagram $K \in \mathscr{F}$, for how many partial states $S=(S_o,S_u,S_{\emptyset})$ of $K$ is it true that $|S_u|=j$, $|S_o|=m-j$, and $\#(K|S)=1$.

Now, let $\mathscr{F}$ denote the set of diagrams of pretzel knots.  We will answer Question $(0,m)$ and $(1,m)$ for all $m$. Let $p,q,r \in \mathbb{Z}\backslash\{0\}$.  Recall that a pretzel link is a link of the form shown in Figure \ref{pretzel}, where inside the boxes we have the 2-braids $\sigma^p$, $\sigma^q$, $\sigma^r$ respectively \cite{bz}.
\begin{figure}
\[
\begin{array}{|c|c|c|} \hline
\begin{array}{c} \scalebox{.5}{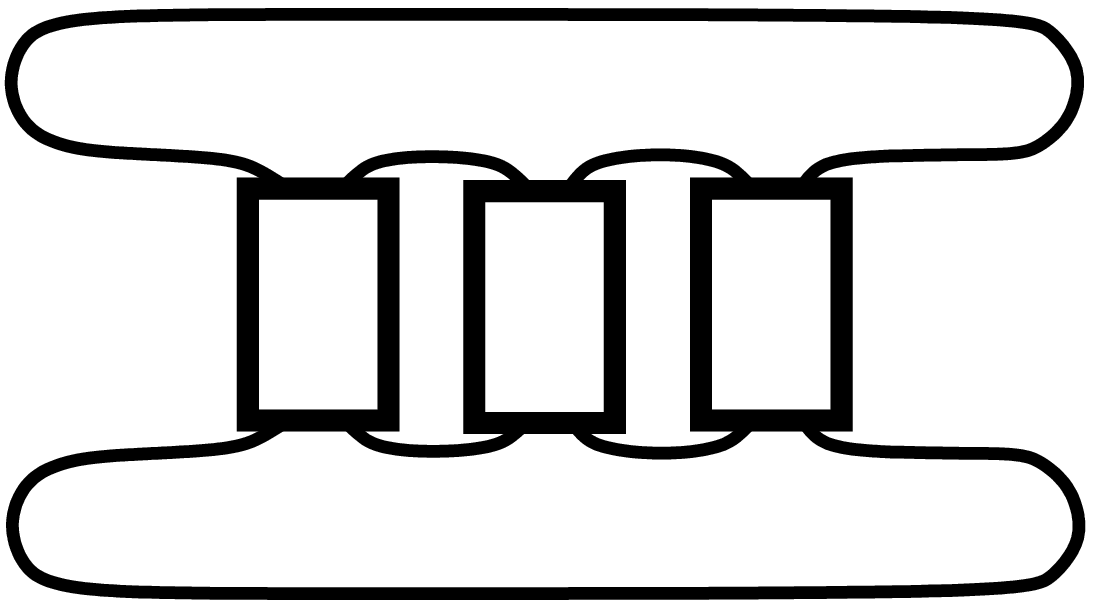}\end{array} & \begin{array}{c}\scalebox{.07}{\psfig{figure=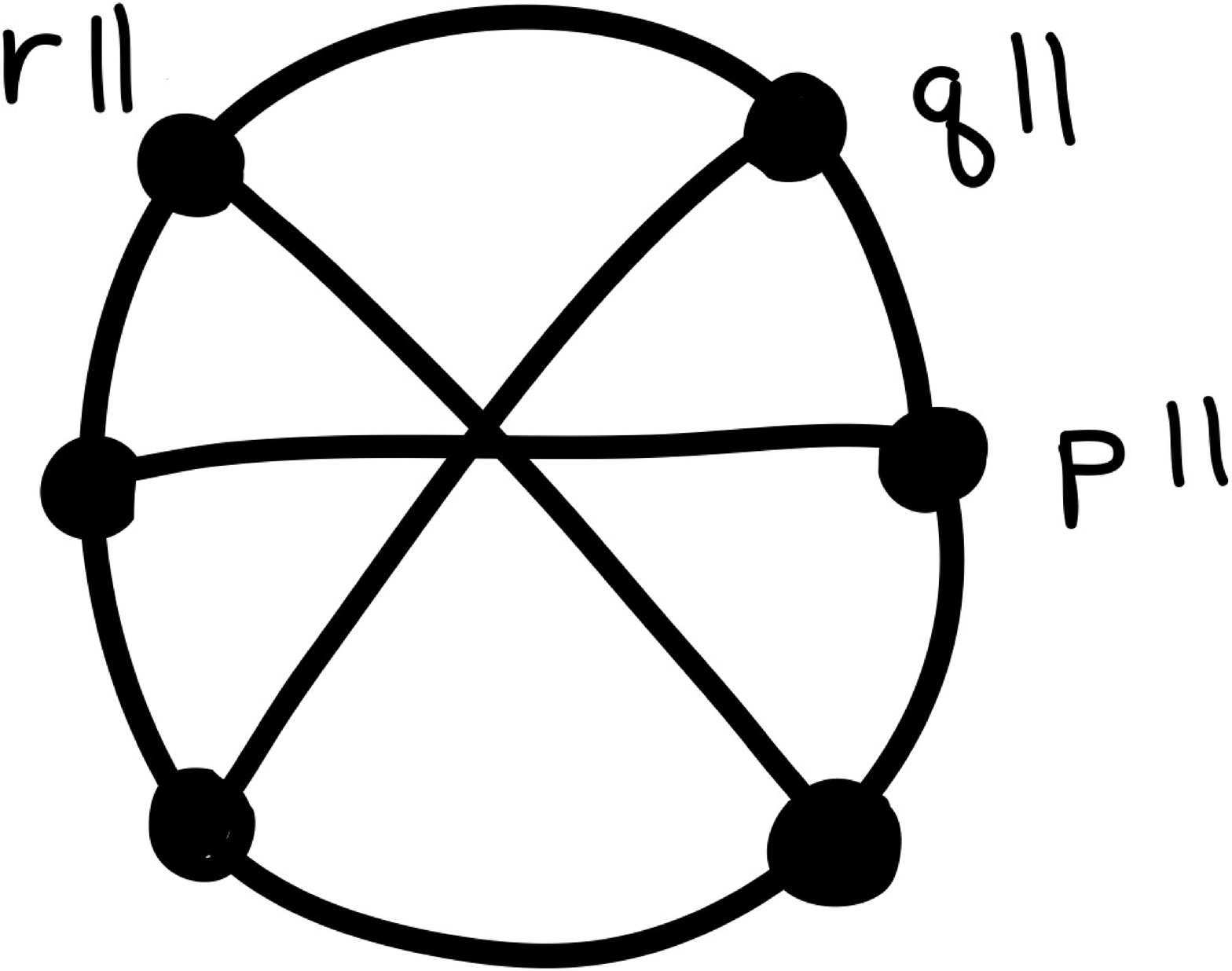}} \end{array}& \begin{array}{c}\scalebox{.05}{\psfig{figure=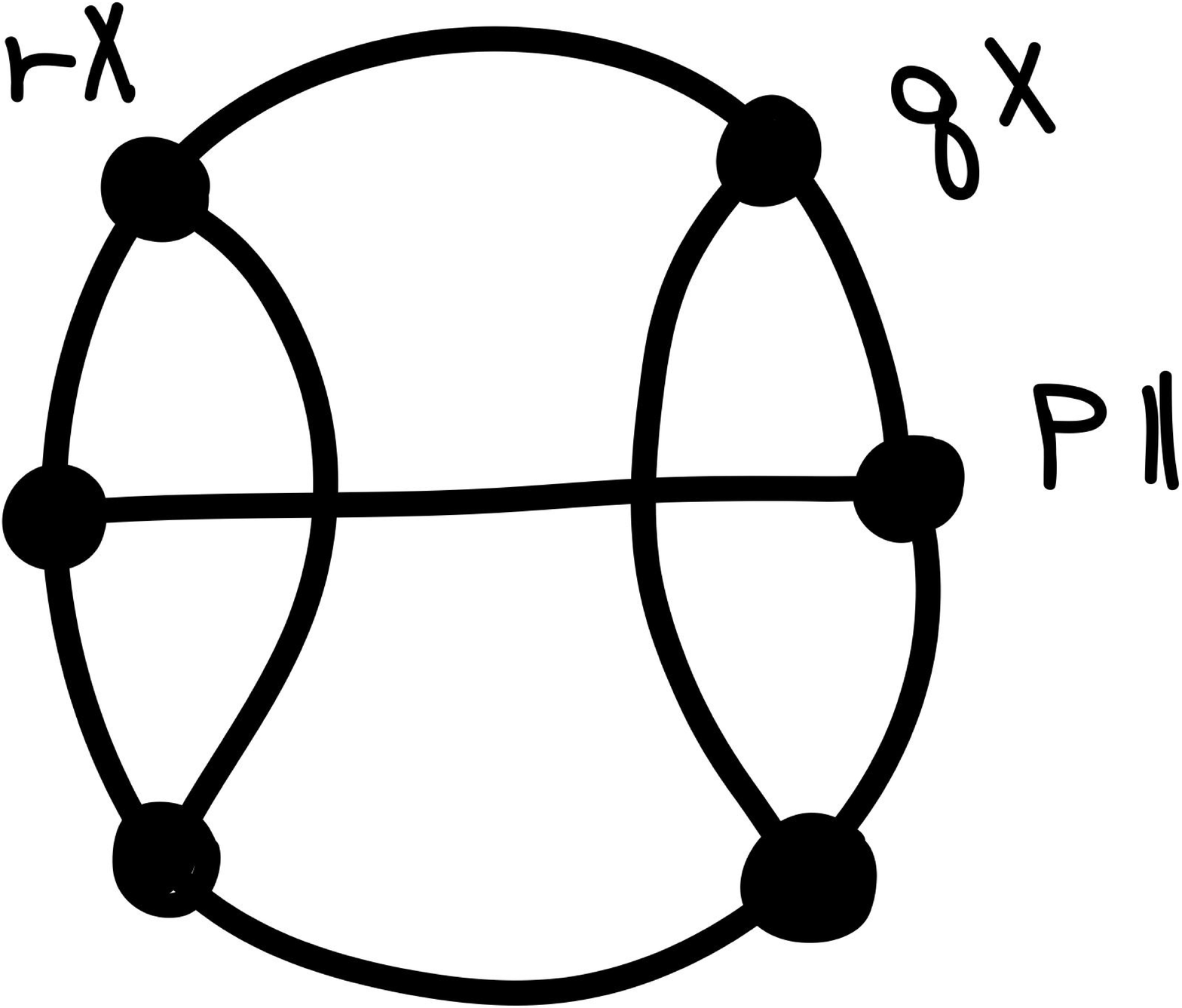}}\end{array}\\
p,q,r \in \mathbb{Z}\backslash\{0\} & p,q,r \text{ all odd} & p \text{ even}, q,r \text{ odd} \\ \hline
\end{array}
\]
\caption{Two cases of pretzel knots.} \label{pretzel}
\end{figure}
It is easy to check that the $L(p,q,r)$ pretzel link is a pretzel knot if and only if at most one of $p$, $q$ and $r$ is even. When all of $p$, $q$ and $r$ are odd, then the Gauss diagram resembles the middle of Figure \ref{pretzel}.  Here, the chord represents $p$, $q$, or $r$ parallel chords.  When one of them is even, say $p$, the Gauss diagram resembles the right hand side of Figure \ref{pretzel}.  The chord of degree two represents $p$ parallel chords.  The chords of degree $1$ represent $q$ and $r$ chords whose intersection graphs are the complete graphs $K_q$ and $K_r$, respectively.  

First we prove a number of results which will be of use to answer Question $(0,m)$ and $(1,m)$. Note that instead of writing out the characteristic polynomials explicitly, we will just write out their values at zero and their derivatives at zero. In the figure below, the regions labelled with $\alpha$ (or $\beta$) have a total of $\alpha$ chords (respectively, $\beta$ chords) with at least one endpoint in them such that every chord in the region intersects every other chord in the region.  Black chords having no endpoints contained in an $\alpha$ region or $\beta$ region represent single chords. If a black chord passes through a region, it intersects every chord with an endpoint in that region.

\begin{figure}[h]
\begin{tabular}{cc}
$D_{\ref{kalphakbeta_lemma}}=\begin{array}{c}\scalebox{1}{\psfig{figure=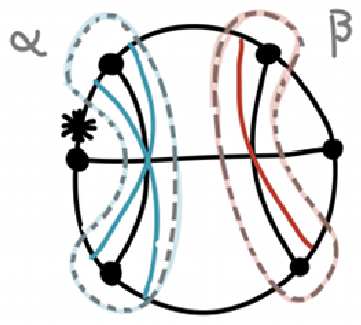}}\end{array}$, & $D_{\ref{fouralphabeta}}=\begin{array}{c}\scalebox{1}{\psfig{figure=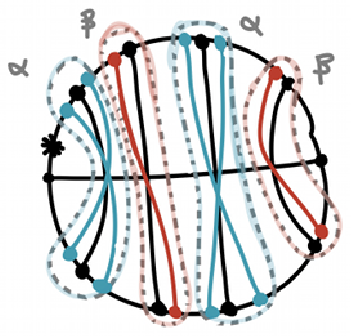}}\end{array}$, \\ 
$D_{\ref{parralemm}}=\begin{array}{c}\scalebox{1}{\psfig{figure=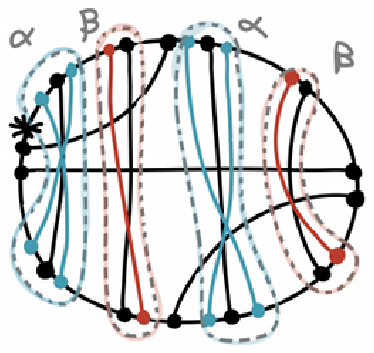}}\end{array}$,
& $D_{\ref{bsmoothcrosslemma}}=\begin{array}{c}\scalebox{1}{\psfig{figure=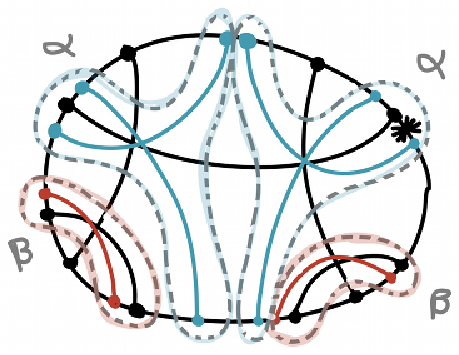}}\end{array}$
\end{tabular}
\caption{Diagrams considered in Lemmas \ref{kalphakbeta_lemma}, \ref{fouralphabeta}, \ref{parralemm}, and \ref{bsmoothcrosslemma}.} \label{lemmafig}
\end{figure}

\begin{lemma} \label{kalphakbeta_lemma} Let $\vec{G}_{\ref{kalphakbeta_lemma}}$ be the linearly ordered graph associated to $D_{\ref{kalphakbeta_lemma}}$ in Figure \ref{lemmafig}. Then:
\begin{eqnarray*}
\vec{P}_{\vec{G}_{\ref{kalphakbeta_lemma}}}(0) &=& \frac{1}{2}\left(1-(-1)^{\alpha+\beta}\right), \\
\vec{P}_{\vec{G}_{\ref{kalphakbeta_lemma}}}'(0) &=& \frac{1}{4}\left(1+(-1)^{\alpha}+(-1)^{\beta}+2\alpha+2\beta+(-1)^{\alpha+\beta}(1+2\alpha+2\beta)\right) .\\
\end{eqnarray*}
\end{lemma}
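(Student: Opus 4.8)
The plan is to identify $\vec{G}_{\ref{kalphakbeta_lemma}}$ explicitly as a coalescence of two linearly ordered complete graphs and then feed that into Theorems \ref{coalthm} and \ref{kncharpoly}. Reading the chord diagram $D_{\ref{kalphakbeta_lemma}}$ of Figure \ref{lemmafig} with its canonical labelling: the $\alpha$ mutually intersecting chords with an endpoint in the $\alpha$-region form a copy of $\vec{K}_\alpha$ on the lowest $\alpha$ labels; the black chord, which passes through both regions and hence meets every chord of the $\alpha$-region and every chord of the $\beta$-region, receives the next label; and the $\beta$ mutually intersecting chords of the $\beta$-region form a copy of $\vec{K}_\beta$ on the top $\beta$ labels. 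No chord of the $\alpha$-region meets a chord of the $\beta$-region. Hence $\vec{G}_{\ref{kalphakbeta_lemma}}$ is precisely the coalescence $\vec{K}_{\alpha+1}\cdot\vec{K}_{\beta+1}$, where the identified vertex is the black chord, sitting as the top-labelled vertex of the first factor.

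Next I would apply Theorem \ref{coalthm} with $\vec{G}=\vec{K}_{\alpha+1}$, $\vec{H}=\vec{K}_{\beta+1}$, and $u=v$ the black vertex, using that $\vec{K}_{\alpha+1}-u\cong\vec{K}_\alpha$, $\vec{K}_{\beta+1}-v\cong\vec{K}_\beta$, and that the promotion of any vertex of a complete linearly ordered graph is again the same complete linearly ordered graph (deleting a vertex and re-inserting one with the same, i.e. full, adjacency only changes the labelling), so $\vec{P}_{\vec{K}_{\beta+1}\leftrightarrow v}(x)=\vec{P}_{\vec{K}_{\beta+1}}(x)$. Theorem \ref{coalthm} then yields
\[
\vec{P}_{\vec{G}_{\ref{kalphakbeta_lemma}}}(x)=\vec{P}_{\vec{K}_{\alpha+1}}(x)\,\vec{P}_{\vec{K}_{\beta}}(x)+\vec{P}_{\vec{K}_{\alpha}}(x)\,\vec{P}_{\vec{K}_{\beta+1}}(x)-x\,\vec{P}_{\vec{K}_{\alpha}}(x)\,\vec{P}_{\vec{K}_{\beta}}(x),
\]
into which I would substitute the closed form $\vec{P}_{\vec{K}_n}(x)=\tfrac12((x-1)^n+(x+1)^n)$ of Theorem \ref{kncharpoly}. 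As a consistency check, this expression is symmetric in $\alpha\leftrightarrow\beta$, as it must be, since swapping the two factors of the coalescence produces the mirror image of the chord diagram, whose characteristic polynomial is unchanged by Theorem \ref{mirrorthm}.

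Finally I would evaluate at $x=0$. From Theorem \ref{kncharpoly} one reads off $\vec{P}_{\vec{K}_n}(0)=\tfrac12(1+(-1)^n)$ and $\vec{P}_{\vec{K}_n}'(0)=\tfrac n2(1-(-1)^n)$. Substituting the first into the displayed formula gives, after expanding $((-1)^{\alpha+1}+1)((-1)^{\beta}+1)+((-1)^{\alpha}+1)((-1)^{\beta+1}+1)$, the value $\vec{P}_{\vec{G}_{\ref{kalphakbeta_lemma}}}(0)=\tfrac12(1-(-1)^{\alpha+\beta})$. For the derivative, I would differentiate the product formula, note that the $-x\,\vec{P}_{\vec{K}_\alpha}\vec{P}_{\vec{K}_\beta}$ summand contributes only $-\vec{P}_{\vec{K}_\alpha}(0)\vec{P}_{\vec{K}_\beta}(0)$ at $x=0$, insert the above values, and collect; writing $a=(-1)^\alpha$ and $b=(-1)^\beta$ the result is $\tfrac14[(1+a)(1+b)(\alpha+\beta+1)+(1-a)(1-b)(\alpha+\beta)]$, and expanding $(1\pm a)(1\pm b)$ with $ab=(-1)^{\alpha+\beta}$ gives exactly the asserted formula for $\vec{P}_{\vec{G}_{\ref{kalphakbeta_lemma}}}'(0)$.

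The only genuinely delicate step is the first one: correctly reading the canonical labelling off $D_{\ref{kalphakbeta_lemma}}$ and recognising that the intersection graph is the coalescence $\vec{K}_{\alpha+1}\cdot\vec{K}_{\beta+1}$, together with the observation that the promotion appearing in Theorem \ref{coalthm} is harmless here because a complete linearly ordered graph is promotion-invariant. Everything after that is a mechanical substitution of Theorem \ref{kncharpoly} and an elementary expansion, which I would not write out in full.
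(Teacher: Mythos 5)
Your proposal is correct and follows essentially the same route as the paper: identify $\vec{G}_{\ref{kalphakbeta_lemma}}$ as the coalescence $\vec{K}_{\alpha+1}\cdot\vec{K}_{\beta+1}$, use that promotion acts trivially on complete linearly ordered graphs, and combine Theorem \ref{coalthm} with Theorem \ref{kncharpoly}; the paper merely outsources the final evaluation at $0$ to \emph{Mathematica}, which you carry out by hand (correctly). The only, immaterial, difference is that the paper coalesces over the vertices labelled $1$ in both factors while you take the identified vertex to be the top-labelled vertex of $\vec{K}_{\alpha+1}$; since $\vec{K}_{n}-u\cong\vec{K}_{n-1}$ for every vertex $u$ and promotion is trivial here, both readings of the figure yield the identical polynomial.
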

\begin{proof} Consider the linearly ordered graphs $\vec{K}_{\alpha+1}$ and $\vec{K}_{\beta+1}$.  We form the coalescence over the vertices labelled $1$ in each graph. Then $\vec{G}_{\ref{kalphakbeta_lemma}}$ is $\vec{K}_{\alpha+1}\cdot \vec{K}_{\beta+1}$.  By Lemma \ref{kncharpoly}, the polynomials $\vec{P}_{\vec{K}_{\alpha+1}}(x)$ and $\vec{P}_{\vec{K}_{\beta+1}}(x)$ are known. Note that for any vertex $j \in V(\vec{K}_{\beta})$, the promotion of $j$ does not change the linearly ordered graph: $\vec{K}_{\beta+1}$ is identical to $\vec{K}_{\beta+1}\leftrightarrow j$.  Then by Theorem \ref{coalthm}, we can determine $\vec{P}_{\vec{K}_{\alpha+1}\cdot \vec{K}_{\beta+1}}(x)$. From this we can determine the polynomial and its derivative at zero (as above, simplified using \emph{Mathematica}).
\end{proof}
\begin{lemma} \label{fouralphabeta} Let $\vec{G}_{\ref{fouralphabeta}}$ be the linearly ordered graph associated to $D_{\ref{fouralphabeta}}$ in Figure \ref{lemmafig}.  Then:
\begin{eqnarray*}
\vec{P}_{\vec{G}_{\ref{fouralphabeta}}}(0) &=& \frac{1}{8}\left(1+(-1)^{\alpha}\right)\left(1+(-1)^{\beta}\right)\\
& \cdot & \left(\left(1+(-1)^{\alpha+1}\right)
\left(1+(-1)^{\beta}\right)+\left(1+(-1)^{\alpha}\right)\left(1+(-1)^{\beta+1}\right)\right) ,\\
\vec{P}_{\vec{G}_{\ref{fouralphabeta}}}'(0) &=& \frac{1}{16}\left(8\alpha \left(1+(-1)^{\beta}\right)\left(1+(-1)^{2\alpha+\beta}\right)+(1+(-1)^{\alpha}) \right. \\
&\cdot & \left.\left( \left(1+(-1)^{\alpha}\right)(1+(-1)^{\beta})^2+8\beta\left(1+(-1)^{\alpha+2\beta}\right)\right)\right).
\end{eqnarray*}
\end{lemma}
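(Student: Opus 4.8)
The plan is to realize $\vec{G}_{\ref{fouralphabeta}}$ as a combination of linearly ordered complete graphs assembled by the operations of Section \ref{skewspec}, in the same spirit as the treatment of $\vec{G}_{\ref{kalphakbeta_lemma}}$ in the previous lemma, and then to extract $\vec{P}_{\vec{G}_{\ref{fouralphabeta}}}(0)$ and $\vec{P}'_{\vec{G}_{\ref{fouralphabeta}}}(0)$ from the resulting closed form. The first step is to decompose $D_{\ref{fouralphabeta}}$ from Figure \ref{lemmafig}: the $\alpha$-region together with the black chord(s) meeting it spans a linearly ordered complete graph $\vec{K}_{\alpha+c}$, the $\beta$-region together with those same black chords spans a $\vec{K}_{\beta+c}$, and the two are glued along the $c$ vertices coming from the black chords. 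Thus $\vec{G}_{\ref{fouralphabeta}}$ is an (iterated) coalescence of $\vec{K}_{\alpha+c}$ and $\vec{K}_{\beta+c}$, possibly adjusted by adding or deleting a directed edge between consecutively labelled black-chord vertices according to whether those chords cross each other; the exact value of $c$ and the edge pattern are read directly off the figure.

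With the decomposition in hand, the computation follows the template of Lemma \ref{kalphakbeta_lemma}. Theorem \ref{kncharpoly} gives $\vec{P}_{\vec{K}_n}(x)=\tfrac12\bigl((x-1)^n+(x+1)^n\bigr)$ for each block, and, since a universal vertex stays universal after being moved to the front of the order, promotion leaves a complete graph unchanged: $\vec{P}_{\vec{K}_n\leftrightarrow v}(x)=\vec{P}_{\vec{K}_n}(x)$. I would then apply Theorem \ref{coalthm} to obtain $\vec{P}$ of the coalescence, and Theorem \ref{addedge} (or Corollary \ref{edgdelcorr} when the two black-chord vertices have the same neighbourhood) for each edge correction; every term produced is again the characteristic polynomial of a complete graph with one or two vertices deleted, hence explicit by Theorem \ref{kncharpoly}. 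This expresses $\vec{P}_{\vec{G}_{\ref{fouralphabeta}}}(x)$ as an explicit polynomial in $x$, $\alpha$ and $\beta$.

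Finally I would specialise to $x=0$ and to $\tfrac{d}{dx}$ at $x=0$. For the value I use $\vec{P}_{\vec{K}_n}(0)=\tfrac12\bigl(1+(-1)^n\bigr)$ (which is Lemma \ref{detknlemm}); for the derivative I use the identity $\vec{P}'_{\vec{G}}(x)=\sum_j\vec{P}_{\vec{G}-j}(x)$ from the proof of Theorem \ref{jointhm}, which gives $\vec{P}'_{\vec{K}_n}(0)=n\,\vec{P}_{\vec{K}_{n-1}}(0)=\tfrac{n}{2}\bigl(1-(-1)^n\bigr)$, combined with the product rule applied to the coalescence and edge formulas. Gathering the terms and simplifying the resulting parity expressions (with \emph{Mathematica}, as elsewhere in this section) should yield the stated identities; in particular one expects the $\vec{P}_{\vec{G}_{\ref{fouralphabeta}}}(0)$ formula to collapse to $0$ for every $\alpha,\beta$, reflecting a forced zero eigenvalue of $\vec{A}_{\vec{G}_{\ref{fouralphabeta}}}$. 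The main obstacle is the bookkeeping of the first step: because the skew-adjacency matrix depends on the vertex order, one must check that the canonical labelling of $D_{\ref{fouralphabeta}}$ actually presents the shared black-chord vertices with consecutive labels and the correct promotion behaviour so that Theorems \ref{coalthm} and \ref{addedge} apply verbatim; once that is verified, the remaining algebra is the same routine binomial-and-parity simplification already carried out for Lemma \ref{kalphakbeta_lemma}.
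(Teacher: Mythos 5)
Your proposal follows the paper's proof essentially verbatim: the paper realizes $\vec{G}_{\ref{fouralphabeta}}$ as the iterated coalescence $(\vec{K}_{\alpha+1}\cdot \vec{K}_{\beta+1})\cdot(\vec{K}_{\alpha+1}\cdot \vec{K}_{\beta+1})$ over the vertices labelled $1$ (so there is a single shared black-chord vertex, no edge corrections are needed, and deleting that vertex leaves $\vec{K}_{\alpha}\sqcup\vec{K}_{\beta}$), then applies Theorem \ref{coalthm} with the closed forms from Lemma \ref{kalphakbeta_lemma} and Theorem \ref{kncharpoly}, using exactly your observation that promotion fixes complete graphs. The only piece you leave implicit --- reading the precise decomposition off Figure \ref{lemmafig} --- is the one step the paper makes explicit, and your expectation that $\vec{P}_{\vec{G}_{\ref{fouralphabeta}}}(0)$ vanishes identically is consistent with the stated formula.
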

\begin{proof} Consider the linearly ordered graphs $\vec{K}_{\alpha+1}$ and $\vec{K}_{\beta+1}$.  First form the coalescence over the vertices labelled $1$: $\vec{K}_{\alpha}\cdot \vec{K}_{\beta}$.  The new vertex, which is labelled 1, is adjacent to a copy of $\vec{K}_{\alpha}$ and a copy of $\vec{K}_{\beta}$. Now take two copies of this $\vec{K}_{\alpha+1} \cdot \vec{K}_{\beta+1}$ and form the coalescence over the vertices labelled $1$.  It follows that $\vec{G}_{\ref{fouralphabeta}}$ is given by $(\vec{K}_{\alpha+1} \cdot \vec{K}_{\beta+1}) \cdot (\vec{K}_{\alpha+1} \cdot \vec{K}_{\beta+1})$. Since we are always coalescing over the vertices labelled 1, promotion does not affect the characteristic polynomial. We have a closed form for $\vec{P}_{\vec{K}_{\alpha+1} \cdot \vec{K}_{\beta+1}}(x)$ from the previous lemma.  On the other hand, deleting the vertex $1$ in $(\vec{K}_{\alpha+1}\cdot \vec{K}_{\beta+1})$ gives the disjoint union $\vec{K}_{\alpha} \sqcup \vec{K}_{\beta}$.  Hence, $\vec{P}_{\vec{G}_{\ref{fouralphabeta}}}(x)$ can be computed from Theorem \ref{coalthm}.  The formulas for the polynomial and its derivative at $0$ follow.     
\end{proof}
\begin{lemma} \label{parralemm} Let $\vec{G}_{\ref{parralemm}}$ be the linearly ordered graph associated to $D_{\ref{parralemm}}$ in Figure \ref{lemmafig}.  Then:
\begin{eqnarray*}
\vec{P}_{\vec{G}_{\ref{parralemm}}}(0) &=& 0,\\
\vec{P}_{\vec{G}_{\ref{parralemm}}}'(0) &=& \frac{3}{4}\left(-1+(-1)^{\alpha+\beta} \right)^2.\\
\end{eqnarray*}
\end{lemma}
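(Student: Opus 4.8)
The plan is to recognize the linearly ordered graph $\vec{G}_{\ref{parralemm}}$ as an explicit iterated coalescence of the building blocks of Section \ref{skewspec}: the linearly ordered complete graphs $\vec{K}_{\alpha}$ and $\vec{K}_{\beta}$ coming from the two regions of $D_{\ref{parralemm}}$, together with a small fixed linearly ordered graph contributed by the ``parallel'' black chords (a $\vec{K}_3$-type block, in contrast with the ``crossing'' configuration of Lemma \ref{bsmoothcrosslemma}). One then pushes the computation through Theorem \ref{coalthm}, Corollary \ref{edgdelcorr}, the join formula of Theorem \ref{jointhm}, and the closed form $\vec{P}_{\vec{K}_n}(x)=\frac{1}{2}\left((x-1)^n+(x+1)^n\right)$ of Theorem \ref{kncharpoly}, exactly as in the proofs of Lemmas \ref{kalphakbeta_lemma} and \ref{fouralphabeta}.

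First I would read off from Figure \ref{lemmafig} the precise structure of $D_{\ref{parralemm}}$ and write down its intersection graph \emph{together with the canonical linear order}. The delicate point --- and the step I expect to be the main obstacle --- is that in the skew-adjacency setting the vertex labelling genuinely affects $\vec{P}_{\vec{G}}(x)$, so each coalescence must be arranged to take place over a vertex labelled $1$ (so that the promotion $\vec{H}\leftrightarrow v$ appearing in Theorem \ref{coalthm} changes nothing, just as $\vec{K}_{\beta+1}\cong\vec{K}_{\beta+1}\leftrightarrow j$ is used in Lemma \ref{kalphakbeta_lemma}), and each black chord must sit at the correct position in the linear order, ideally between two ``twin'' vertices with consecutive labels so that Corollary \ref{edgdelcorr} applies. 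Once $\vec{G}_{\ref{parralemm}}$ is correctly written as such an iterated coalescence of complete graphs plus the controlled extra block, the remainder is bookkeeping.

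Second, with the decomposition in hand, I would apply Theorem \ref{coalthm} repeatedly, feeding in $\vec{P}_{\vec{K}_n}(x)$ together with the facts that deleting a vertex from $\vec{K}_n$ gives $\vec{K}_{n-1}$ and that deleting the coalescence vertex from $\vec{K}_{\alpha}\cdot\vec{K}_{\beta}$ gives $\vec{K}_{\alpha}\sqcup\vec{K}_{\beta}$ (characteristic polynomial $\vec{P}_{\vec{K}_{\alpha}}(x)\vec{P}_{\vec{K}_{\beta}}(x)$). This produces a closed form for $\vec{P}_{\vec{G}_{\ref{parralemm}}}(x)$ as a polynomial in $x$ whose coefficients are explicit functions of $\alpha$ and $\beta$; as in the earlier lemmas this symbolic step is routine but messy, so I would do it in \emph{Mathematica} and record only $\vec{P}_{\vec{G}_{\ref{parralemm}}}(0)$ and $\vec{P}_{\vec{G}_{\ref{parralemm}}}'(0)$.

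Third, I would evaluate at $x=0$. The identities that make this collapse are $\vec{P}_{\vec{K}_n}(0)=\frac{1}{2}(1+(-1)^n)$ (so $1$ for $n$ even and $0$ for $n$ odd) and the derivative formula $\vec{P}_{\vec{G}}'(x)=\sum_{j}\vec{P}_{\vec{G}-j}(x)$ used throughout Section \ref{skewspec}, which gives $\vec{P}_{\vec{K}_n}'(0)=n\,\vec{P}_{\vec{K}_{n-1}}(0)$. Substituting these into the closed form and simplifying should yield $\vec{P}_{\vec{G}_{\ref{parralemm}}}(0)=0$ identically (consistent with Theorem \ref{zullthmstate}, since the oriented smoothing represented by $D_{\ref{parralemm}}$ always has at least two components) and $\vec{P}_{\vec{G}_{\ref{parralemm}}}'(0)=\frac{3}{4}\left(-1+(-1)^{\alpha+\beta}\right)^2$, the factor $3$ in the $\alpha+\beta$ odd case being precisely $\vec{P}_{\vec{K}_3}'(0)=3$ surviving after the even/odd contributions of the $\vec{K}_{\alpha}$ and $\vec{K}_{\beta}$ factors cancel.
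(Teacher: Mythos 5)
Your proposal follows essentially the same route as the paper: the paper realizes $\vec{G}_{\ref{parralemm}}$ as $\vec{H}_2\cdot\vec{H}_2$, where $\vec{H}_2$ is $K_2\vec{\nabla}(\vec{K}_{\alpha}\sqcup\vec{K}_{\beta})$ with the edge between the vertices labelled $1$ and $2$ removed via Corollary \ref{edgdelcorr}, computes $\vec{P}_{\vec{H}_1}$ from Theorem \ref{jointhm} using $\vec{P}_{\vec{K}_3}$ and $\vec{P}_{\vec{K}_{\alpha+1}\cdot\vec{K}_{\beta+1}}$ (Lemma \ref{kalphakbeta_lemma}), applies Theorem \ref{coalthm} with the same promotion-is-trivial observation you flag, and evaluates at $0$ by machine. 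This is exactly the toolkit, the decomposition shape (complete-graph blocks, a $\vec{K}_3$-type contribution from the join with $K_2$, an edge deletion, and a self-coalescence), and the workflow you describe.
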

\begin{proof} First consider $\vec{H}_1=K_2 \vec{\nabla}(\vec{K}_{\alpha} \sqcup K_{\beta})$. Then by Theorem \ref{jointhm}, $\vec{P}_{\vec{H}_1}(x)$ is determined solely by $\vec{P}_{\vec{K}_{\alpha}}(x)$, $\vec{P}_{\vec{K}_{\beta}}(x)$,$\vec{P}_{\vec{K}_1 \vec{\nabla} (\vec{K}_{\alpha} \sqcup \vec{K}_{\beta})}(x)$, and $\vec{P}_{\vec{K}_3}(x)$. It is easy to see that $K_1 \vec{\nabla} (\vec{K}_{\alpha} \sqcup \vec{K}_{\beta})$ is the same as $\vec{K}_{\alpha+1} \cdot \vec{K}_{\beta+1}$, as computed in Lemma \ref{kalphakbeta_lemma}.

Let $\vec{H}_2$ denote the linearly ordered graph obtained by deleting the blue and red subsets of arrows on the right hand side of $D_{\ref{parralemm}}$ and the resulting isolated chord on the bottom right. Then $\vec{H}_2$ contains two subsequent chords with the same adjacency.  Then $\vec{H}_2$ may be obtained from $\vec{H}_1$ by deleting the edge between the vertices labelled $1$ and $2$. If follows from Corollary \ref{edgdelcorr} that:
\[
\vec{P}_{\vec{H}_2}(x)=\vec{P}_{\vec{H}_1}(x)-\vec{P}_{\vec{K}_{\alpha}}(x)\vec{P}_{\vec{K}_{\beta}}(x).
\]
Finally, we can form the linearly ordered graph of $D_{\ref{parralemm}}$ as $\vec{H}_2 \cdot \vec{H}_2$, where the coalescence is taken over the vertex labelled $2$ in the first graph and the vertex labelled $1$ in the second graph. Once again, promotion does not affect the graph.  Hence, $\vec{P}_{\vec{G}_{\ref{parralemm}}}(x)$ is determined from Theorem \ref{coalthm}. The polynomial and derivative at zero follow. 
\end{proof}

\begin{lemma}\label{bsmoothcrosslemma} Let $\vec{G}_{\ref{bsmoothcrosslemma}}$ be the linearly ordered graph associated to $D_{\ref{bsmoothcrosslemma}}$ in Figure \ref{lemmafig}.  Then:
\begin{eqnarray*}
\vec{P}_{\vec{G}_{\ref{bsmoothcrosslemma}}}(0) &=& 0 ,\\
\vec{P}_{\vec{G}_{\ref{bsmoothcrosslemma}}}'(0) &=& \frac{1}{4}\left(-1+(-1)^{\beta}+(-1)^{\alpha}+2(-1)^{\alpha+\beta}\right.\\
&+&\left.(-1)^{2\alpha+\beta}+(-1)^{\alpha+2\beta}+8\alpha+8\beta\right).
\end{eqnarray*}
\end{lemma}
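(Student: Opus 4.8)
The plan is to follow the template of Lemmas \ref{kalphakbeta_lemma}, \ref{fouralphabeta}, and \ref{parralemm}: decompose $\vec{G}_{\ref{bsmoothcrosslemma}}$ into building blocks built from linearly ordered complete graphs, compute $\vec{P}_{\vec{G}_{\ref{bsmoothcrosslemma}}}(x)$ by repeated application of the operations of Section \ref{skewspec}, and then extract the value and first derivative at $x=0$.

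First I would read the intersection pattern off $D_{\ref{bsmoothcrosslemma}}$. The $\alpha$- and $\beta$-regions contribute copies of $\vec{K}_\alpha$ and $\vec{K}_\beta$; the black chords threading through these regions are each adjacent to all of $\vec{K}_\alpha$ and/or all of $\vec{K}_\beta$. The difference from $D_{\ref{parralemm}}$ is that here the two distinguished chords arising from the (double-covered) unoriented smoothing \emph{cross}, so the corresponding two vertices are adjacent; hence the relevant sub-block is of the form $K_2 \vec{\nabla}(\vec{K}_\alpha \sqcup \vec{K}_\beta)$ (the graph $\vec{H}_1$ appearing in the proof of Lemma \ref{parralemm}) rather than the edge-deleted graph $\vec{H}_2$ obtained from it via Corollary \ref{edgdelcorr}. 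I would compute the polynomial of this block from Theorem \ref{jointhm}, whose inputs are $\vec{P}_{\vec{K}_\alpha}(x)$, $\vec{P}_{\vec{K}_\beta}(x)$, $\vec{P}_{\vec{K}_3}(x)$ (all from Theorem \ref{kncharpoly}) and $\vec{P}_{K_1 \vec{\nabla}(\vec{K}_\alpha \sqcup \vec{K}_\beta)}(x)=\vec{P}_{\vec{K}_{\alpha+1}\cdot \vec{K}_{\beta+1}}(x)$, already obtained in Lemma \ref{kalphakbeta_lemma}.

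Next I would assemble $\vec{G}_{\ref{bsmoothcrosslemma}}$ from two such blocks, together with the solitary chords of the diagram, by a coalescence taken over vertices of minimal label, exactly as in the last step of the proof of Lemma \ref{parralemm}; since the coalescence is over a minimal-label vertex in each factor, the promotion $\leftrightarrow v$ leaves the factors unchanged, and deleting the coalescence vertex returns a join or a disjoint union whose polynomial is already known, so Theorem \ref{coalthm} applies directly. If the canonical ordering inherited from the basepoint does not present one of the factors in the expected order, I would replace $D_{\ref{bsmoothcrosslemma}}$ by its mirror image and cite Theorem \ref{mirrorthm} to keep the characteristic polynomial unchanged. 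Substituting $x=0$ and differentiating once, using $\vec{P}_{\vec{K}_n}(0)=1$ for $n$ even and $0$ for $n$ odd (Lemma \ref{detknlemm}) and simplifying with \emph{Mathematica} as elsewhere in this section, then yields the two displayed formulas. The identity $\vec{P}_{\vec{G}_{\ref{bsmoothcrosslemma}}}(0)=0$ can also be checked directly by exhibiting an explicit $\{0,\pm 1\}$-valued null vector of the skew-adjacency matrix, which by Corollary \ref{zeroeigen} corresponds to a boundary component of the associated surface, in the style of the vector $\vec{v}_n$ in Lemma \ref{detknlemm}.

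I expect the main obstacle to be the first step: correctly determining the adjacencies in $D_{\ref{bsmoothcrosslemma}}$ — in particular whether the two crossing chords meet the \emph{same} family of chords or \emph{complementary} families, and where the solitary chords sit relative to the $\alpha$- and $\beta$-regions — since precisely these choices govern the linear terms $8\alpha+8\beta$ and the mixed signs $(-1)^{2\alpha+\beta}$, $(-1)^{\alpha+2\beta}$ in $\vec{P}'_{\vec{G}_{\ref{bsmoothcrosslemma}}}(0)$ that distinguish this lemma from Lemma \ref{parralemm}. Once the decomposition and the coalescence vertices are fixed correctly, the rest is routine: a finite sequence of applications of Theorems \ref{jointhm} and \ref{coalthm} followed by a symbolic simplification.
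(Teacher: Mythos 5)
Your overall strategy (decompose into complete-graph building blocks, apply the operations of Section \ref{skewspec}, then evaluate at zero) is the right one, and you correctly flagged the dangerous step, but the decomposition you propose is not the graph $\vec{G}_{\ref{bsmoothcrosslemma}}$. A vertex count already shows this: $D_{\ref{bsmoothcrosslemma}}$ is the double cover of a partial state with $m=\alpha+\beta+1$ chords (the unoriented chord $a$ is one of the $q$ chords and is therefore counted in $\alpha$), so it has $2m-1=2\alpha+2\beta+1$ chords, whereas coalescing two copies of $K_2\vec{\nabla}(\vec{K}_{\alpha}\sqcup\vec{K}_{\beta})$ at a single vertex produces $2(\alpha+\beta+2)-1=2\alpha+2\beta+3$ vertices. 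The underlying error is the premise that ``the two distinguished chords arising from the double-covered unoriented smoothing cross'': in $D_j^f(S)$ the unoriented chord $j$ lifts to a \emph{single} chord $\bar{a}_jb_j$, not a pair, so there is no $K_2$ block here. The pair of distinguished chords in $D_{\ref{parralemm}}$ came from the two lifts of the second, \emph{oriented}, chord of the parallel family; in the present crossing configuration the analogous pair consists of the two lifts of the $p$-chord, and these attach to the rest of the diagram through shared-vertex (coalescence) structure rather than through a join.

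The paper's proof in fact uses no join at all. It sets $\vec{H}_1=\vec{K}_{\alpha+1}\cdot\vec{K}_{\beta+1}$, where the coalescence is taken over the vertex labelled $1$ of $\vec{K}_{\beta+1}$ and a vertex \emph{other than} the one labelled $1$ of $\vec{K}_{\alpha+1}$ (so this is not the graph of Lemma \ref{kalphakbeta_lemma}), and then forms $\vec{H}_1\cdot\vec{H}_1$ over the vertices labelled $1$; Theorem \ref{mirrorthm} is invoked to see that the ordering inherited from the basepoint does not change the characteristic polynomial, and two applications of Theorem \ref{coalthm} finish the computation. The resulting graph is a chain of four complete graphs glued along cut vertices, with $2\alpha+2\beta+1$ vertices and three cut vertices, whereas your candidate is $2$-connected away from its single coalescence point. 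So while the second half of your plan (coalescing over minimal-label vertices so that promotion is harmless, appealing to mirror images, and symbolic evaluation at $0$) is sound and matches the paper, the first step fails and would produce a different polynomial; the proposed null-vector check of $\vec{P}_{\vec{G}_{\ref{bsmoothcrosslemma}}}(0)=0$ does not rescue the derivative computation.
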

\begin{proof} Start with $\vec{H}_1=\vec{K}_{\alpha+1}\cdot \vec{K}_{\beta+1}$, where the coalescence is taken over the vertex labelled $1$ in $\vec{K}_{\beta+1}$ and some vertex other than the vertex labelled $1$ in $K_{\alpha+1}$. Then take two copies of $\vec{H}_1$ and form $\vec{H}_1 \cdot \vec{H}_1$ over the vertices labelled $1$. Recall that $\bar{H}$ denotes the intersection graph of a the mirror image of a diagram.  Recall that for linearly ordered graphs obtained in this way, we have from Theorem \ref{mirrorthm} that $\vec{P}_{\bar{H}}(x)=\vec{P}_{\vec{H}}(x)$.  It follows that $\vec{P}_{\vec{G}_{\ref{bsmoothcrosslemma}}}(x)=\vec{P}_{\vec{H}_1\cdot \vec{H_1}}(x)$.  This latter polynomial can be computed easily from two applications of Theorem \ref{coalthm}. The statements about the polynomial and its derivative at zero follow from this computation.   
\end{proof}

We are now poised to answer Questions $(0,m)$ and $(1,m)$.  Let $p,q,r \in \mathbb{Z}\backslash\{0\}$ and $P=|p|$, $Q=|q|$, $R=|r|$. Let $m \in \mathbb{N}$. Let $N_0(p,q,r,m)$ be the number of ways that $m$ crossings can be chosen from the pretzel knot $L(p,q,r)$ such that the oriented smoothing at those $m$ crossings has one component. Let $N_1(p,q,r,m)$ be the number of ways that $m$ crossings can be chosen from the pretzel knot $L(p,q,r)$ such that exactly one is an unoriented smoothing, exactly $m-1$ are oriented smoothings, and the result has exactly one component. In other words, $N_j$ is the answer to Question $(j,m)$ for the parameters $p,q,r,m$.
\begin{theorem} \label{pqrodd} Suppose that $p,q,r$ are all odd.
\begin{eqnarray*}
N_0(p,q,r,m) &=& \left\{\begin{array}{cc} 0 & m \ne 2 \\ PQ+QR+RS & m=2 \end{array} \right.,\\
N_1(p,q,r,m) &=& \left\{\begin{array}{cc} 0 & m>3 \\ P+Q+R & m=1 \\ 2(PQ+QR+RS) & m=2 \\ 3PQR + 2{P \choose 2}(Q+R)+2{Q \choose 2}(P+R)+2{R \choose 2}(P+Q) & m=3 \\ \end{array} \right. .\\
\end{eqnarray*}
\end{theorem}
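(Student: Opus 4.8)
The plan is to convert both parts of Question $(j,m)$ into the loop-counting framework of Sections~\ref{loopcounting}--\ref{skewspec} and then enumerate the finitely many ``shapes'' of admissible subdiagrams, using the building-block computations of Lemmas~\ref{kalphakbeta_lemma}--\ref{bsmoothcrosslemma}. First I would describe the Gauss diagram of $L(p,q,r)$ when $p,q,r$ are all odd: it consists of three bundles of $P$, $Q$, $R$ mutually parallel (hence pairwise non-crossing) chords, in which every chord of one bundle crosses every chord of each of the other two bundles (this is exactly the middle picture of Figure~\ref{pretzel}). With the basepoint in the standard place, the chords of each bundle receive consecutive canonical labels, so the intersection graph is the complete tripartite graph $K_{P,Q,R}$, linearly ordered with the three bundles occurring in three consecutive blocks. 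Consequently, keeping $a$ chords from the first bundle, $b$ from the second, and $c$ from the third produces the subdiagram $D_{\emptyset}$ whose linearly ordered intersection graph is $K_{a,b,c}$, and the shape of any admissible configuration is determined by the triple $(a,b,c)$ up to the evident symmetry.

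\textbf{The $N_0$ count.} Here I would apply Theorem~\ref{zullthmstate}: the oriented smoothing on the chosen $m$ chords has one component iff $m_0(\vec{A}_{\vec{G}_{\emptyset}})=0$, i.e.\ iff the skew-adjacency matrix of the linearly ordered $K_{a,b,c}$ is nonsingular. A skew-symmetric matrix of odd order is singular, so no admissible configuration exists when $m$ is odd. When $m\ge 4$, by pigeonhole some bundle contributes at least two chords; two chords of the same bundle are non-adjacent with identical neighbourhoods, and in the induced linearly ordered subgraph no intervening vertex separates them, so their rows in $\vec{A}_{\vec{G}_{\emptyset}}$ coincide and the matrix is again singular. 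This leaves $m=2$, where the matrix is nonsingular precisely for the shape $(1,1,0)$ (and its symmetric versions), i.e.\ precisely when the two chords lie in different bundles; counting these edges of $K_{P,Q,R}$ gives $PQ+QR+RP$, which yields the stated value of $N_0$.

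\textbf{The $N_1$ count.} Here I would invoke part (B) of the RLCP: if $j$ carries the unoriented smoothing, then $\#_D(S)=1$ iff $\#_{D_j^f(S)}(O)=2$ iff $m_0$ of the skew-adjacency matrix of the linearly ordered graph of the double cover $D_j^f(S)$ equals $1$. The case $m=1$ is immediate, since an unoriented smoothing of a knot is a knot, giving $N_1(p,q,r,1)=P+Q+R$. For $m=2$ and $m=3$, I would run the doubling-and-contracting construction of Section~\ref{dblcover_sec} on each shape $(a,b,c)$ together with the choice of which chord is $j$, identify the resulting $(2m-1)$-vertex intersection graph with one of the graphs $D_{\ref{kalphakbeta_lemma}}$, $D_{\ref{fouralphabeta}}$, $D_{\ref{parralemm}}$, $D_{\ref{bsmoothcrosslemma}}$ of Figure~\ref{lemmafig} (or a linearly ordered complete graph, by Theorem~\ref{kncharpoly}), and read off $\alpha,\beta$ from $(a,b,c)$. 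Because that graph has odd order, $m_0\ge 1$ automatically, so a configuration is admissible iff $\vec{P}'(0)\neq 0$, and the relevant value $\vec{P}'(0)$ is supplied by Lemmas~\ref{kalphakbeta_lemma}--\ref{bsmoothcrosslemma} and Theorem~\ref{kncharpoly}. The parity conditions built into those formulas (for instance $\vec{P}_{\vec{G}_{\ref{parralemm}}}'(0)=\tfrac{3}{4}(-1+(-1)^{\alpha+\beta})^2$, which vanishes exactly when $\alpha+\beta$ is even) single out which shapes, and which choices of $j$ within a shape, contribute: for $m=2$ only the ``crossing'' pairs survive, with either chord allowed to be $j$, giving $2(PQ+QR+RP)$; for $m=3$ only the shapes $(1,1,1)$ (any of the three chords as $j$) and $(2,1,0)$ (with $j$ one of the two parallel chords) survive, giving $3PQR+2\binom{P}{2}(Q+R)+2\binom{Q}{2}(P+R)+2\binom{R}{2}(P+Q)$; and for $m\ge 4$ the same analysis, where again some bundle supplies two parallel chords in the double cover, forces $\vec{P}'(0)=0$, so $N_1$ vanishes.

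\textbf{Main obstacle.} The hard part will be the second half: correctly working out the intersection graph of the double cover $D_j^f(S)$ for each configuration-plus-choice-of-$j$ type and matching it, with the right parameters $\alpha,\beta$, to the building-block graphs of Figure~\ref{lemmafig}, and then keeping honest track of exactly which choices of the unoriented chord make $\vec{P}'(0)\neq 0$. This last bookkeeping is precisely what produces the coefficients $2$ and $3$ in the formula for $N_1$, and is presumably where the error later corrected by the referee originally crept in. By contrast, the $N_0$ computation and the vanishing statements for large $m$ are routine once the complete tripartite structure of the intersection graph is in hand.
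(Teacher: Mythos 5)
Your treatment of $N_0$ is correct, and it is a slightly different (more linear-algebraic) route than the paper's: the paper rests everything on the single elementary observation that a pair of parallel chords both given the oriented smoothing forces at least two components, and then does a short case check, whereas you get the odd-$m$ vanishing from the singularity of odd-order skew-symmetric matrices and the $m\ge 4$ vanishing from repeated rows of $\vec{A}_{\vec{G}_{\emptyset}}$. Both work, and your version handles $m=3$ more uniformly. (The $RS$ in the statement is a typo for $RP$, as you silently assume.)

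The $N_1$ half, however, has a genuine gap. You propose to build the double cover $D_j^f(S)$ for each configuration and ``identify the resulting intersection graph with one of the graphs of Figure~\ref{lemmafig},'' but the diagrams $D_{\ref{kalphakbeta_lemma}}$--$D_{\ref{bsmoothcrosslemma}}$ are assembled from complete graphs $\vec{K}_{\alpha}$, $\vec{K}_{\beta}$ and are tailored to the $p$-even pretzel of the \emph{following} theorem, where the $q$-bundle and the $r$-bundle are each internally mutually crossing and do not cross one another. In the all-odd case the situation is reversed: each bundle is an independent set and distinct bundles cross completely, so the chosen subdiagrams (hence their double covers) do not match Figure~\ref{lemmafig} except in the degenerate cases where at most one chord is taken from each auxiliary bundle. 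Concretely, for $m=4$ with shape $(2,1,1)$ the two non-parallel oriented chords cross each other, whereas in $D_{\ref{parralemm}}$ with $\alpha=\beta=1$ the $\alpha$- and $\beta$-blocks are non-adjacent by construction ($\vec{K}_{\alpha}\sqcup\vec{K}_{\beta}$); so the vanishing you claim for $m\ge 4$ does not follow from Lemma~\ref{parralemm}, and your stated mechanism (``some bundle supplies two parallel chords in the double cover'') is not what kills it. Likewise the $m=3$ shape $(1,1,1)$ produces a five-chord double cover that is none of the listed building blocks and would need its own computation. The paper sidesteps all of this: its proof of this theorem never invokes the double cover or the characteristic-polynomial lemmas, but instead runs the same parallel-chord observation together with a direct hand check of the few surviving shapes (the $m=4$ shape $(2,1,1)$ is disposed of ``by drawing such a chord diagram and checking''). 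To make your route work you would have to redo the building-block computations for complete multipartite configurations; as written, the bookkeeping that produces the coefficients $2$ and $3$ is both deferred and pointed at the wrong lemmas.
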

\begin{proof} Note that if a pair of parallel chords is chosen, and both carry the oriented smoothing, then the number of components is automatically at least two, regardless of any other choices made.

Consider first the formula for $N_0$. If $m=1$, the the oriented smoothing will give two components. If $m>3$, then at least one pair of parallel chords must be chosen in Figure \ref{pretzel}. It follows that $N_0=0$ in this case. Similarly, if $m=3$ and two parallel chords are chosen, we get $N_0=0$.  If $m=3$ and all chords are intersecting, we can check that $N_0=0$. If $m=2$, the number of components will be one exactly one only when a pair of intersecting chords is chosen.  This gives the formula as above.

Consider now the formula for $N_1$.  If $m=1$, we give the unoriented smoothing at exactly one crossing.  This will give one component. If $m=2$, the number of components will be one only when a pair of intersecting chords is chosen. One of the chords will have the unoriented smoothing and the other will have the oriented smoothing. Hence, the number of ways is $2(PQ+QR+RS)$. 

For $m=3$, the number of components will be one when (1) none of the chords are parallel or (2) when two parallel chords are chosen, one of which has the unoriented smoothing, and the third chord intersects both parallel chords.   The contribution in (1) is $3PQR$.  The contribution in (2) is $2{P \choose 2}(Q+R)+2{Q \choose 2}(P+R)+2{R \choose 2}(P+Q)$. 

When $m>3$, we must choose at least two parallel chords. If the unoriented smoothing is not amongst a set of parallel chords, then the number of components is at least 2. Therefore, we have that $m \le 4$, Suppose that $m=4$.  Then the only possibility is that two chords are parallel and that two chords intersect all three of the other chords. Moreover, we must have that the unoriented smoothing is chosen to be one of the two parallel arrows.  By drawing such a chord diagram and checking, we see that the number of components is two.  The formula above follows.
\end{proof}
\begin{theorem} Suppose that one of $p$, $q$ and $r$ is even (say, $p$). Then we have:
\begin{eqnarray*}
\underline{m = 1:} \,\,\,\,\,\,\,\,\,\,\,\,\,\,\,\, & & \\
N_0(p,q,r,1) &=& 0 ,\\
N_1(p,q,r,1) &=& P+Q+R ,\\
\underline{m \ge 2, \text{ even}:} & & \\
N_0(p,q,r,m) &=& \sum_{k=0}^{m/2} {Q \choose 2k }{R \choose m-2k}+P \cdot {Q \choose 2k}{R \choose m-1-2k}+P\cdot{R \choose 2k}{Q \choose m-1-2k} ,\\
N_1(p,q,r,m) &=& P \cdot \sum_{k=0}^{(m-2)/2} {Q \choose 2k}{R \choose m-1-2k}+{Q \choose m-1-2k}{R \choose 2k}\\
&+& P Q\sum_{k=0}^{m-2} { Q-1 \choose k}{R \choose m-2-k}+Q \sum_{k=0}^{\lfloor \frac{ m-1}{2} \rfloor}{ R \choose 2k }{Q-1 \choose m-1-2k}\\
&+& 
P R\sum_{k=0}^{m-2} { R-1 \choose k}{Q \choose m-2-k}+R \sum_{k=0}^{\lfloor \frac{ m-1}{2} \rfloor}{ Q \choose 2k }{R-1 \choose m-1-2k} ,\\
\underline{m \ge 3, \text{ odd}:} & & \\
N_0(p,q,r,m) &=& 0,\\
N_1(p,q,r,m) &=& P \cdot \sum_{k=0}^{(m-1)/2} {Q \choose 2k}{R \choose m-1-2k}  \\
             &+& P \cdot \sum_{k=0}^{(m-3)/2} {Q \choose 2k}{R \choose m-2-2k}+{Q \choose m-2-2k}{R \choose 2k} \\
             &+& P Q\sum_{k=0}^{m-2} { Q-1 \choose k}{R \choose m-2-k}+Q \sum_{k=0}^{(m-1)/2}{ R \choose 2k }{Q-1 \choose m-1-2k}\\
&+& 
P R\sum_{k=0}^{m-2} { R-1 \choose k}{Q \choose m-2-k}+R \sum_{k=0}^{(m-1)/2}{ Q \choose 2k }{R-1 \choose m-1-2k}. \\
\end{eqnarray*}
\end{theorem}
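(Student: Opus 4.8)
The plan is to run the same combinatorial casework as in the proof of Theorem \ref{pqrodd}, but feeding it through parts (A) and (B) of the RLCP together with the four preparatory lemmas. Fix the Gauss diagram of $L(p,q,r)$ with $p$ even pictured on the right of Figure \ref{pretzel}: it has $P$ mutually non-intersecting $p$-chords, $Q$ mutually intersecting $q$-chords (intersection graph $\vec{K}_Q$), $R$ mutually intersecting $r$-chords (intersection graph $\vec{K}_R$), every $p$-chord meets every $q$-chord and every $r$-chord, and no $q$-chord meets any $r$-chord. For a partial state $S$ smoothing $m$ of these chords we have $\#(L(p,q,r)|S)=\#_D(S)$. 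If $S$ has no unoriented smoothing, part (A) of the RLCP gives $\#_D(S)=m_0(\vec{A}_{\vec{G}_{\emptyset}})+1$, so $S$ has one component iff $\vec{P}_{\vec{G}_{\emptyset}}(0)\neq 0$. If $S$ has exactly one unoriented smoothing at a chord $j$, part (B) produces a double cover $D_j^f(S)$ with $\#_{D_j^f(S)}(O)=2\,\#_D(S)$; applying part (A) to the all-oriented state of this cover, $S$ has one component iff $0$ is a \emph{simple} root of $\vec{P}_{\vec{G}}(x)$ for $\vec{G}$ the linearly ordered intersection graph of $D_j^f(S)$, i.e. iff $\vec{P}_{\vec{G}}(0)=0$ and $\vec{P}_{\vec{G}}'(0)\neq 0$. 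This is precisely the data tabulated in Lemmas \ref{kalphakbeta_lemma}, \ref{fouralphabeta}, \ref{parralemm} and \ref{bsmoothcrosslemma}.

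For $N_0$ I would split the $m$ smoothed chords by how many are $p$-chords. If at least two $p$-chords are smoothed, two parallel chords both get the oriented smoothing, so (as in the proof of Theorem \ref{pqrodd}) there are at least two components and the contribution is $0$. If no $p$-chord is smoothed, $D_{\emptyset}$ has intersection graph $\vec{K}_a\sqcup\vec{K}_b$ with $a$ chords from $\vec{K}_Q$, $b$ from $\vec{K}_R$, $a+b=m$; by the product formula for disjoint unions and Lemma \ref{detknlemm}, $\vec{P}_{\vec{K}_a\sqcup\vec{K}_b}(0)=\vec{P}_{\vec{K}_a}(0)\vec{P}_{\vec{K}_b}(0)\neq 0$ exactly when $a$ and $b$ are both even, which forces $m$ even and yields $\sum_k\binom{Q}{2k}\binom{R}{m-2k}$. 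If exactly one $p$-chord is smoothed, together with $\alpha$ chords from $\vec{K}_Q$ and $\beta$ from $\vec{K}_R$, the intersection graph is $\vec{K}_{\alpha+1}\cdot\vec{K}_{\beta+1}=\vec{G}_{\ref{kalphakbeta_lemma}}$, so by Lemma \ref{kalphakbeta_lemma} one gets one component iff $\alpha+\beta$ is odd; summing over $(\alpha,\beta)$ with $\alpha+\beta=m-1$ supplies the remaining two terms of the even-$m$ formula, while for odd $m$ none of the parity conditions can be met and $N_0=0$ (the case $m=1$ being this with $m$ odd).

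For $N_1$ I would case on the type of the single unoriented chord $j$ and on the types/number of the $m-1$ oriented chords, in each case writing down $D_j^f(S)$ from the words $W^f_j(S)$ of Section \ref{dblcover_sec}. When $j$ is a $p$-chord (no other $p$-chords smoothed), the double cover is a two-sheeted gluing of $\vec{K}_{\alpha+1}\cdot\vec{K}_{\beta+1}$, namely $\vec{G}_{\ref{fouralphabeta}}$; when extra parallel $p$-chords are present and hence not smoothed, $\vec{G}_{\ref{parralemm}}$ occurs; when $j$ is a $q$- or $r$-chord the cover is a coalescence-with-mirror graph, which by Theorem \ref{mirrorthm} collapses to $\vec{G}_{\ref{bsmoothcrosslemma}}$ (the variant with one $p$-chord additionally smoothed is handled by the same three lemmas, with the $p$-chord playing the role of the coalescence vertex). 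Lemmas \ref{fouralphabeta}, \ref{parralemm} and \ref{bsmoothcrosslemma} then tell us for which parities of $\alpha$ and $\beta$ we have $\vec{P}(0)=0$ and $\vec{P}'(0)\neq 0$ (e.g. for $\vec{G}_{\ref{fouralphabeta}}$ this holds exactly when not both $\alpha,\beta$ are odd, for $\vec{G}_{\ref{bsmoothcrosslemma}}$ it always holds, etc.); translating these parity conditions into binomial counts and summing over the cases reproduces the $m=1$, even-$m$, and odd-$m$ formulas, with the $N_0$-type terms dropping out when $m$ is odd. As in Theorem \ref{pqrodd} one also checks that smoothing more than the relevant number of $p$-chords can never give one component, so all the sums are finite and the convention for $m$ above the crossing number holds automatically.

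The main obstacle is the bookkeeping rather than the algebra: one must correctly match each admissible configuration of smoothed chords, and the position of the unoriented smoothing, to the right one of the four lemma graphs with the right values of $\alpha$ and $\beta$ — including the degenerate cases $\alpha=0$ or $\beta=0$ and the cases where the double cover decomposes as a disjoint union so that the lemma formulas must be replaced by products of $\vec{P}_{\vec{K}_\bullet}$'s. The characteristic-polynomial evaluations themselves are exactly the content of Lemmas \ref{detknlemm}, \ref{kalphakbeta_lemma}, \ref{fouralphabeta}, \ref{parralemm}, \ref{bsmoothcrosslemma} and Theorems \ref{mirrorthm}, \ref{jointhm}, \ref{coalthm}, so once the matching is in place, checking that the surviving parity conditions reassemble into the displayed binomial sums is routine.
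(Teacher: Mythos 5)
Your proposal follows essentially the same route as the paper's proof: reduce $N_0$ to the sign of $\vec{P}_{\vec{G}_{\emptyset}}(0)$ via part (A) of the RLCP (using $\vec{K}_a\sqcup\vec{K}_b$ and $\vec{K}_{\alpha+1}\cdot\vec{K}_{\beta+1}$), and reduce $N_1$ to the condition that $0$ be a simple root of the characteristic polynomial of the double cover $D_j^f(S)$, with the same case split on the location of the unoriented chord and the same matching to Lemmas \ref{kalphakbeta_lemma}, \ref{fouralphabeta}, \ref{parralemm}, \ref{bsmoothcrosslemma} and the product case $(\vec{P}_{\vec{K}_{\beta}})^2\vec{P}_{\vec{K}_{\alpha}\cdot\vec{K}_{\alpha}}$. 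One small slip: $\vec{G}_{\ref{parralemm}}$ arises when one chord parallel to the unoriented chord is \emph{additionally given the oriented smoothing} (unsmoothed chords are simply deleted from $D_{\emptyset}$ and contribute nothing), not, as you write, when extra parallel $p$-chords are ``present and hence not smoothed.''
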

\begin{proof} The formula for $N_0$ follows immediately from Lemma \ref{kalphakbeta_lemma}.

Let $S=(S_o,S_u,S_{\emptyset})$ be a partial smoothing with $|S_u|=1$. For $N_1$, first choose a chord $a$ to be the unoriented smoothing. Then $a$ may be amongst the $p$ chords, the $q$ chords or the $r$ chords. Suppose first that $a$ is amongst the $p$ parallel chords. Immediately we see that if more than $1$ of the remaining $m-1$ chords are chosen from the amongst the $p-1$ chords parallel to $a$, then you get more than one component.  Hence, you can choose $1$ or $0$ from these chords. 

Now take the double cover $\Sigma_D^2(S)$ defined in Section 2.4 and contract along one of the two bands corresponding to $a$. This gives a Gauss diagram $D_a^f(S)$ (or $D_a^s(S)$) as previously described. The question for $D$ translates to the question of a no-unoriented smoothing on $D_a^f(S)$ for a choice of $a$ and $2(m-1)$ other chords which gives \emph{exactly two components}. Note that the chords other than $a$ in $D_a^f(S)$ are chosen in pairs via the inverse image of the double cover. Let $\alpha$ denote the number of chords chosen from amongst the $q$ chords and $\beta$ the number of chords chosen from amongst the $r$ chords. 

Suppose that $0$ of the chords parallel to $a$ are chosen for the oriented smoothing. Then $D_a^f(S)$ is $D_{\ref{fouralphabeta}}$ of Figure \ref{lemmafig}. By Lemma \ref{fouralphabeta}, $\vec{P}_{\vec{G}_{\ref{fouralphabeta}}}(0)=0$ for all $\alpha,\beta \ge 1$ and $\vec{P}_{\vec{G}_{\ref{fouralphabeta}}}'(0)\ne 0$ when $\alpha,\beta$ are not both odd. If $\alpha=\beta=0$, then $m=1$ and the answer is $P+Q+R$.  If $\alpha=0$, $\beta \ne 0$ or $\alpha\ne 0$, $\beta=0$, then by Lemma \ref{kalphakbeta_lemma}, $D_a^f(S)$ has two components.  Thus the contribution to $N_1$ from this case for $m\ge 2$ is:
\begin{eqnarray*}
\underline{m \text{ odd}} &: &P \cdot \sum_{k=0}^{(m-1)/2} {Q \choose 2k}{R \choose m-1-2k},  \\
\underline{m \text{ even}} & : & P \cdot \sum_{k=0}^{(m-2)/2} {Q \choose 2k}{R \choose m-1-2k}+{Q \choose m-1-2k}{R \choose 2k}. 
\end{eqnarray*}

Now suppose that one of the chords parallel to $a$ is chosen for an oriented smoothing.  Then $D_a^f(S)$ is $D_{\ref{parralemm}}$ in Figure \ref{lemmafig}. In this case we need to choose $\alpha$, $\beta$ so that $\alpha+\beta=m-2$. By Lemma \ref{parralemm}, $\vec{P}_{\vec{G}_{\ref{parralemm}}}(0)=0$ whenever $\alpha \ge 1$, $\beta \ge 0$ or $\beta \ge 1, \alpha \ge 0$ and $\vec{P}_{\vec{G}_{\ref{parralemm}}}'(0)\ne 0$ whenever $\alpha$ and $\beta$ have opposite parity. If $\alpha=\beta=0$, then $m=2$ and $D_{\ref{parralemm}}$ has $4$ components. Thus the contribution to $N_1$ is:
\begin{eqnarray*}
\underline{m \text{ odd}} &: & P \cdot \sum_{k=0}^{(m-3)/2} {Q \choose 2k}{R \choose m-2-2k}+{Q \choose m-2-2k}{R \choose 2k} ,\\
\underline{m \text{ even}}&: &  0 .
\end{eqnarray*}

Suppose that the unoriented smoothing is chosen from amongst the $q$ chords. If more than one of the $p$ chords is chosen for the oriented smoothings, then there are more than two components. So either zero of the $p$ chords are chosen or one of the $p$ chords is chosen.   If one of the $p$ is chosen, $D_a^f(S)$ is the diagram $D_{\ref{bsmoothcrosslemma}}$ in Figure \ref{lemmafig}. Note that we must have $\alpha \ge 1$.  By Lemma \ref{bsmoothcrosslemma}, $\vec{P}_{\vec{G}_{\ref{bsmoothcrosslemma}}}(0)=0$ for all $\alpha,\beta \ge 1$ and  $\vec{P}_{\vec{G}_{\ref{bsmoothcrosslemma}}}'(0) \ne 0$ for all $\alpha, \beta \ge 1$. If $\beta=0$, then Lemma \ref{kalphakbeta_lemma} implies that there are two components for every choice of $\alpha$.  Thus the contribution to $N_1$ for this case is:
\[
P Q\sum_{k=0}^{m-2} { Q-1 \choose k}{R \choose m-2-k}.
\]

Now suppose that none of the $p$ chords is chosen. Then we have $\alpha+\beta=m$, where $\alpha\ge 1$.  Then by moving the basepoint if necessary (this does not affect the number of boundary components), the skew characteristic polynomial of the linearly order graph is given $(\vec{P}_{\vec{K}_{\beta}}(x))^2 \cdot \vec{P}_{\vec{K}_{\alpha} \cdot \vec{K}_{\alpha}}(x)$ for all $\alpha\ge 1$, $\beta \ge 0$. This will give two components exactly when $\beta$ is even and $\alpha$ is any number greater than or equal to $1$. Then the contribution to $N_1$ is:
\[
Q \sum_{k=0}^{\lfloor \frac{ m-1}{2} \rfloor}{ R \choose 2k }{Q-1 \choose m-1-2k}.
\]
Similarly, the contribution to $N_1$ if the unoriented smoothing is chosen from the $r$ chords is given by:
\[
P R\sum_{k=0}^{m-2} { R-1 \choose k}{Q \choose m-2-k}+R \sum_{k=0}^{\lfloor \frac{ m-1}{2} \rfloor}{ Q \choose 2k }{R-1 \choose m-1-2k}.
\]
Adding all of the contributions together and accounting for the parity of $m$, we obtain the indicated formula.  This completes the proof. 

\end{proof}

\section{Some Problems and Questions} \label{problems}
We conclude with a list of questions about the skew-spectra of virtual knots.  The author has spent some time investigating them, but has not made significant progress toward their resolution. It is hoped that this list will inspire a more sophisticated investigation.
\begin{enumerate}
\item For each partial state $S$ of a Gauss diagram $D$, is the skew-spectrum of $S$ determined by the skew-spectrum of the all-oriented state?
\item For each partial state $S$ of a Gauss diagram $D$ having at least one unoriented smoothing, can the linearly ordered graph of the double cover be determined from graph operations on the linearly ordered graph $\vec{G}_D$?
\item For Gauss diagrams associated to knot theory relations (e.g Reidemeister moves, six-term relations, four-term relations), what relations appear in the skew-spectra of their linearly ordered graphs?
\item What is the answer to Question $(j,m)$ for other infinite parametrized families of virtual knots?
\item Answers to Question $(j,m)$ will be linear combinations of generalized hypergeometric functions.  Also, we know that knot polynomials and finite-type invariants count certain subdiagrams of Gauss diagrams.  Are there any hypergeometric identities which can be derived using these counting principles, Reidemeister equivalence, and mutations? 
\item (Manturov) Which combinatorial formulae can be defined on intersection graphs (or on graph spectra) and which of those can be integrated as finite-type invariants of free knots?
\end{enumerate}
\bibliographystyle{plain}
\bibliography{bib_comm}

\begin{thebibliography}{10}

\bibitem{BP}
M.~Brandenbursky and M.~Polyak.
\newblock Link invariants via counting surfaces.
\newblock {\em preprint}, 2011.

\bibitem{bz}
G.~Burde and H.~Zieschang.
\newblock {\em Knots}, volume~5 of {\em de Gruyter Studies in Mathematics}.
\newblock Walter de Gruyter \& Co., Berlin, second edition, 2003.

\bibitem{CKR}
S.~Chmutov, M.~C. Khoury, and A.~Rossi.
\newblock Polyak-viro formulas for coefficients of the {C}onway polynomial.
\newblock {\em arXiv:0810.3146v1[math.GT]}, 2008.

\bibitem{ChP}
S.~Chmutov and M.~Polyak.
\newblock Elementary combinatorics for the {HOMFLYPT} polynomial.
\newblock {\em arXiv:0810.4105v2[math.GT]}, 2009.

\bibitem{C1}
M.~W. Chrisman.
\newblock Twist lattices and the {J}ones-{K}auffman polynomial for long virtual
  knots.
\newblock {\em J. Knot Theory Ramifications}, 19(5):655--675, 2010.

\bibitem{C4pap}
M.~W. Chrisman.
\newblock A lattice of finite-type invariants of virtual knots.
\newblock {\em in preparation}, 2011.

\bibitem{MR1440854}
D.~Cvetkovi{\'c}, P.~Rowlinson, and S.~Simi{\'c}.
\newblock {\em Eigenspaces of graphs}, volume~66 of {\em Encyclopedia of
  Mathematics and its Applications}.
\newblock Cambridge University Press, Cambridge, 1997.

\bibitem{MR2571608}
D.~Cvetkovi{\'c}, P.~Rowlinson, and S.~Simi{\'c}.
\newblock {\em An introduction to the theory of graph spectra}, volume~75 of
  {\em London Mathematical Society Student Texts}.
\newblock Cambridge University Press, Cambridge, 2010.

\bibitem{INM}
D.~Ilyutko, I.~Nikonov, and V.O. Manturov.
\newblock Parity in knot theory and graph-link theory.
\newblock {\em Journal of Mathematical Sciences}, to appear.

\bibitem{IM1}
D.~P. Ilyutko and V.~O. Manturov.
\newblock Graph-links.
\newblock {\em arXiv:1001.0384v1 [math.GT]}.

\bibitem{KaV}
L.~H. Kauffman.
\newblock Virtual knot theory.
\newblock {\em European J. Combin.}, 20(7):663--690, 1999.

\bibitem{MR1402473}
C.~R.~F. Maunder.
\newblock {\em Algebraic topology}.
\newblock Dover Publications Inc., Mineola, NY, 1996.
\newblock Reprint of the 1980 edition.

\bibitem{cdbook}
J.~Mostovoy S.~Chmutov, S.~Dushin.
\newblock {\em CDBook:Introduction to Vassiliev Knot Invariants}.
\newblock http://www.math.ohio-state.edu/~chmutov/preprints/.

\bibitem{MR2646647}
L.~Traldi.
\newblock A bracket polynomial for graphs. {II}. {L}inks, {E}uler circuits and
  marked graphs.
\newblock {\em J. Knot Theory Ramifications}, 19(4):547--586, 2010.

\bibitem{MR2597243}
L.~Traldi and L.~Zulli.
\newblock A bracket polynomial for graphs. {I}.
\newblock {\em J. Knot Theory Ramifications}, 18(12):1681--1709, 2009.

\bibitem{MR1341816}
L.~Zulli.
\newblock A matrix for computing the {J}ones polynomial of a knot.
\newblock {\em Topology}, 34(3):717--729, 1995.

\end{thebibliography}
\end{document}